\theoremstyle{plain}
\newtheorem{theorem}{Theorem}[section]
\newtheorem{lemma}[theorem]{Lemma}
\newtheorem{corollary}[theorem]{Corollary}
\theoremstyle{assumption}
\newtheorem{assumption}[theorem]{Assumption}
\theoremstyle{definition}
\newcommand{\BC}{ \color{blue} }
\newcommand{\Sp}{ \hspace{0.05cm} }
\newcommand{\x}{ {\bf x}}
\newcommand{\R}{\mathbb{R}}
\newcommand{\C}{\mathbb{C}}
\numberwithin{equation}{section}
\begin{document}

\title [Carleman-Fourier Linearization]
{\hspace{-1cm} Carleman-Fourier Linearization of Complex Dynamical Systems: Convergence and Explicit Error Bounds}

\author{Panpan Chen}
\address{Department of Mathematics, University of Central Florida, Orlando, Florida 32816}
\email{panpan.chen@ucf.edu}

\author{Nader Motee}
\address{Mechanical Engineering and Mechanics, Lehigh University,  Bethlehem, PA 18015}
\email{nam211@lehigh.edu}

\author{Qiyu Sun}
\address{Department of Mathematics, University of Central Florida, Orlando, Florida 32816}
\email{qiyu.sun@ucf.edu}


\begin{abstract}

 This paper presents a Carleman-Fourier linearization method for nonlinear dynamical systems with periodic vector fields involving multiple fundamental frequencies. By employing Fourier basis functions,  the  nonlinear dynamical system is transformed into a linear model on an infinite-dimensional space. The proposed approach yields accurate approximations over extended regions around equilibria and for longer time horizons,  compared to traditional Carleman linearization with monomials. Additionally, we develop a finite-section approximation for the resulting infinite-dimensional system and provide explicit error bounds that demonstrate exponential convergence to the original system's solution as the truncation length increases. For specific classes of dynamical systems, exponential convergence is achieved across the entire time horizon. The practical significance of these results lies in guiding the selection of suitable truncation lengths for applications such as model predictive control, safety verification through reachability analysis, and efficient quantum computing algorithms. The theoretical findings are validated through illustrative simulations.
  \end{abstract}

\maketitle

\section {Introduction}
\label{introduction.section}

Complex dynamical systems are characterized by their inherent nonlinearity, which leads to a wide spectrum of dynamical phenomena across various domains, including physical, biological, and engineering sciences. Despite significant advances, a comprehensive mathematical methodology for analyzing and designing nonlinear dynamical systems remains largely undeveloped. This gap makes the concept of lifting nonlinear systems to their linear counterparts particularly attractive, given the well-established and effective techniques available for analyzing and controlling linear systems, which are not easily adaptable to nonlinear contexts.

  Carleman linearization, originally formulated in 1932, is a powerful method for addressing the nonlinearities inherent in dynamical systems. It has emerged as a predominant technique for systematically converting nonlinear systems into linear forms \cite{Amini2022, amini2020approximate, amini2020quadratization, brockett2014early, Brunton2016, forets2017explicit, forets2021reachability, Korda2018, KordaMezic2018, kowalski1991nonlinear, minisini2007carleman, steeb1980non, surana2024, WangJungersOng2023, Wu2024}. The resurgence of interest in Carleman linearization is driven by significant advances in theoretical understanding, enhancements in numerical and algorithmic techniques, and increased access to large data sets \cite{Amini2022, Akiba2023, liu2021efficient}.

  The control systems community has witnessed several success stories stemming from Carleman linearization concepts \cite{amini2020approximate, amini2020quadratization, forets2021reachability, hashemian2015fast, krener1974linearization, krener1975bilinear, loparo1978estimating, pruekprasert2022moment, rauh2009carleman, rotondo2022towards}. For example, in \cite{rauh2009carleman}, Carleman linearization was used to design optimal control laws for nonlinear systems. In \cite{loparo1978estimating}, Carleman approximation was applied to establish a relationship between the lifted system and the domain of attraction of the original nonlinear system. Recent work \cite{hashemian2015fast} leveraged Carleman linearization for efficient implementation of model predictive control in nonlinear systems. Additionally, in \cite{minisini2007carleman}, Carleman linearization was employed for state estimation and feedback control law design. References \cite{amini2020approximate, amini2020quadratization} utilized the structure of the lifted system to develop a tractable approach for quadratizing and solving the Hamilton-Jacobi-Bellman equation using an exact iterative method. The effectiveness of Carleman linearization is largely attributed to its ability to transform complex nonlinear problems into linear ones, where well-established linear analysis tools can be applied. These transformations provide deeper insights into system behavior and enable the design of effective control strategies that would otherwise be challenging for inherently nonlinear dynamics. By leveraging linear control techniques, Carleman linearization opens up new possibilities for addressing a wide array of problems in the control of nonlinear dynamical systems.

  In this paper, we  adapt the cutting-edge tools and algorithms from traditional Carleman linearization to the Carleman-Fourier framework. This extension equips us with powerful tools for the analysis and control of nonlinear dynamical systems featuring periodic vector fields.
Our proposed Carleman-Fourier linearization is especially well-suited for lifting
 the following complex dynamical system
 \begin{subequations} \label{dynamicsystem}
\begin{equation} \label{complexdynamic.def}
\dot{\bf x}(t) = {\bf g}(t, {\bf x}), \ \ t\ge 0,
\end{equation}
for a state vector ${\bf x}(t) \in \C^d$ 
 evolving over time $t\ge 0$ from an initial  state ${\bf x}(0)= {\bf x}_0$,
 where the governing  vector field  ${\bf g}$ is periodic,
\begin{equation} \label{complexdynmaic.fdef}
{\bf g}(t, {\bf x}) \sim
\sum_{{\pmb \alpha}\in {\mathbb Z}^d} {\bf g}_{\pmb \alpha} (t) \hspace{0.05cm} e^{ i {\pmb \alpha} {\bf x}},
\end{equation}
\end{subequations}
 and  its Fourier coefficients
${\bf g}_{\pmb \alpha}(t)=[g_{1, {\pmb \alpha}}(t),\ldots, g_{d, {\pmb \alpha}} (t)]^T \in \C^d$ satisfies
\begin{assumption}\label{assump-1}
There exist positive constants $D_0$ and $R$ such that
\begin{equation}\label{assumption1}
    \sup_{t\geq 0}\sum_{|{\pmb \alpha}|=k}\sum_{j=1}^d |g_{j,   {\pmb \alpha}}(t)| \leq D_0 R^{-k}, \ k\ge 0.
\end{equation}
\end{assumption}
\noindent Here and thereafter, we  utilize notations ${\pmb \alpha}{\bf x}=\alpha_1 x_1+\dots +\alpha_d x_d$ and  $|{\pmb \alpha}|=|\alpha_1|+\dots+|\alpha_d|$ for   ${\bf x}=[x_1, \ldots, x_d]^{\BC T}\in {\mathbb C}^d$ and ${\pmb \alpha}=[\alpha_1, \dots, \alpha_d]\in \mathbb{Z}^d$.
With Assumption  \ref{assump-1} for the vector field ${\bf g}$,
its Fourier coefficients enjoy exponential decay when $R>1$, have exponential growth when $0<R<1$, and are bounded when $R=1$.
Throughout this paper, we refer to the following two illustrative examples of the nonlinear dynamical system \eqref{dynamicsystem} to demonstrate the applicability of our technical conditions and results. The first system is the scale-valued complex dynamical system
\begin{equation} \label{simpleexample2.eq1} \Dot{x} = a(1-b e^{ix}), \ t\ge 0 \end{equation}
with a governing vector field being a trigonometrical polynomial of degree one, where $a, b \in {\mathbb C}\backslash \{0\}$. The second example is the well-known first-order Kuramoto model, governed by
\begin{equation}\label{Kuramoto.def} \Dot\theta _{p}=\omega _{p}+{\frac {K}{d}}\sum _{q=1}^{d}\sin(\theta _{q}-\theta _{p}), \ 1\le p\le d, \end{equation}
where $\theta_p$ represents the phase of the $p$-th oscillator with natural frequency $\omega_p, 1\le p\le d$, and $K \ne 0$ signifies the coupling strength between the oscillators \cite{bronski2021, dietert2016, guo2021, heggli2019, ji2014, kuramoto1984}.

\smallskip

Let ${\mathbb Z}_{++}^d$ be the set of all nonzero \(d\)-tuples of nonnegative integers and set ${\bf x}^{\pmb\alpha}=x_1^{\alpha_1}\cdots x_d^{\alpha_d}$ for ${\bf x}=[x_1, \ldots, x_d]^{\BC T}\in {\mathbb C}^d$ and ${\pmb \alpha}=[\alpha_1, \dots, \alpha_d]^T\in {\mathbb Z}_{++}^d$. Traditional Carleman linearization utilizes state variable monomials ${\bf x}^{\pmb \alpha}$ for ${\pmb \alpha}\in {\mathbb Z}_{++}^d$ to lift finite-dimensional nonlinear dynamical systems to infinite-dimensional linear systems. While originally formulated for real dynamical systems, achieving sparse representations for complex dynamical systems remains challenging. In this paper, we begin by introducing Carleman linearization for the complex dynamical system \eqref{dynamicsystem} with the vector field ${\bf g}$ satisfying Assumption \ref{assump-1} for some $D_0>0$ and $R>1$; we refer to  \eqref{carlemannonhomo.eq3}.

We denote the standard $p$-norm of a vector ${\bf x} \in {\mathbb C}^d$ by $\|{\bf x}\|_p$ for $1 \le p \le \infty$. In Theorem \ref{maintheorem0}, we show that the first block ${\bf y}_{1, N}$ for $N \ge 1$ in the finite-section approximation \eqref{Athomogenous.def} of the Carleman linearization \eqref{carlemannonhomo.eq3} converges exponentially to the state vector ${\bf x}$ of the original dynamical system \eqref{dynamicsystem} within the time range $[0, T_C^*)$, provided that the complex dynamical system \eqref{dynamicsystem} has the origin ${\bf 0}$ as its equilibrium:
\begin{equation}\label{zeroequil.eq}
{\bf g}(t, {\bf 0}) = {\bf 0} \ \ {\rm for \ all} \ \ t \ge 0,
\end{equation}
and that the initial ${\bf x}_0$ is sufficiently close to the equilibrium:
\begin{equation} \label{maintheorem0.eq1} \|{\bf x}_0\|_{\infty} < e^{-1} \ln R, \end{equation}
where $R > 1$ and
\begin{equation}\label{Tstar0.def} T_C^* = \frac{(e - 1)(\ln R)^2}{(2e - 1) D_0 R} \ln\left(\frac{\ln R}{e \|{\bf x}_0\|_{\infty}}\right). \end{equation}
The exponential convergence rate is given by
\begin{equation}\label{CarlemanConvergencerate} r_{C}(t) = e^{D_0 R t/(\ln R)^2} \left(\frac{\|{\bf x}_0\|_{\infty} e}{\ln R}\right)^{(e - 1) / (2e - 1)} \end{equation}
for all $0 \le t < T_C^*$. The concept of equilibrium points is fundamental in the study of complex dynamical systems. However, many complex systems do not satisfy the equilibrium point requirement \eqref{zeroequil.eq} due to their intricate and often chaotic nature. For instance, condition \eqref{zeroequil.eq} is satisfied only if $b = 1$ in our illustrative dynamical system \eqref{simpleexample2.eq1} and if all intrinsic natural frequencies $\omega_p$ for $1 \le p \le d$ are zero in the Kuramoto model \eqref{Kuramoto.def}.

\smallskip

In this paper, we propose using the Fourier representation of periodic vector fields alongside traditional Carleman linearization techniques to capitalize on the periodicity of the governing vector field ${\bf g}$ in the complex dynamical system \eqref{dynamicsystem}. This approach leverages the inherent structure of periodic vector fields to enhance both the parsimony and interpretability of the resulting embedding. Specifically, we utilize Fourier basis functions $w_{\pmb \alpha} = e^{i{\pmb \alpha} {\bf x}}, {\pmb \alpha} \in {\mathbb Z}_{++}^d$, instead of monomials ${\bf x}^{\pmb \alpha}, {\pmb \alpha} \in {\mathbb Z}_{++}^d$ in Carleman linearization, to lift the complex dynamical system \eqref{dynamicsystem} with the periodic vector field ${\bf g}$, which satisfies Assumption \ref{assump-1} for some $D_0, R > 0$ and is analytic on the shifted upper half-plane, that is
\begin{equation}
\label{assumption0}
    {\bf g}_{{\pmb \alpha}}(t)={\bf 0}\  \ {\rm for \ all} \ {\pmb \alpha}\in {\mathbb Z}^d\backslash {\mathbb Z}^d_+ \ {\rm and }\  t\ge 0.
\end{equation}
Here, ${\mathbb Z}_{+}^d = {\mathbb Z}_{++}^d \cup \{{\bf 0}\}$ represents the set of all $d$-tuples of nonnegative integers. We refer to this Fourier-based lifting scheme as {\bf Carleman-Fourier linearization}; see \eqref{Carleman.eq5}.

Careful handling of the resulting infinite-dimensional linear system \eqref{Carleman.eq5} is essential, as the corresponding state matrix ${\bf B}(t)$ does {\bf not} represent a bounded operator on $\ell^2({\mathbb Z}_{++}^d)$, the Hilbert space of all square-summable sequences on ${\mathbb Z}_{++}^d$. This limitation prevents us from directly applying existing theory to analyze the linear system derived from Carleman-Fourier linearization and thereby gain insights into the original dynamical system. By observing the upper-triangular structure of the state matrix ${\bf B}(t)$ and following the approach in \cite{Amini2022}, we demonstrate that the logarithm of the first block of the finite-section approximation to the infinite-dimensional system \eqref{Carleman.eq5} converges exponentially to a multiple of the state vector ${\bf x}$ over the time interval $[0, T_{CF}^*)$, provided that the initial state vector ${\bf x}_0 := [x_{0,1}, \ldots, x_{0,d}]^T \in {\mathbb C}^d$ satisfies
  \begin{equation} \label{maintheoremanalytic.thm1.eq1}
  \|\exp(i{\bf x}_0)\|_\infty= \exp\big(-\min_{1\le j\le d}\Im {x}_{0, j}\big)<e^{-1} R,
 \end{equation}
 where $\exp(i{\bf x}_0)$ is the exponential of the initial state  ${\bf x}_0$, and
 \begin{equation} \label{maintheoremanalytic.thm1.eq4}
    T^*_{CF} = \frac{e-1}{D_0(2e-1)} \big(\ln R-\ln  \|\exp(i{\bf x}_0)\|_\infty  - 1 \big)>0.
\end{equation}
Here and throughout, we denote the imaginary and real parts of $x \in \mathbb{C}$ by $\Im x$ and $\Re x$, respectively. Furthermore, the exponential convergence rate is given by
 \begin{equation} \label{convergencerateCarlemanFourier}
r_{CF}(t) = e^{ D_0t}
 \left(\frac{e
 \|\exp(i{\bf x}_0)\|_\infty}{R }\right)^{(e-1)/(2e-1)}
\end{equation}
for all $0\le t< T_{CF}^*$; see Theorem \ref{maintheoremanalytic.thm1} and Corollary \ref{maintheoremanalytic.cor1}.

In this paper, we also establish the exponential convergence of the finite-section approximation of the Carleman-Fourier linearization \eqref{Carleman.eq5} over the entire time range $[0, \infty)$ with an exponential convergence rate given by
\begin{equation} \label{ConvergencerateCarlemanFourierWhole}
\tilde r_{CF}= \frac{(D_0+\mu_0)\|\exp(i{\bf x}_0)\|_2}{\mu_0 R}<1
\end{equation}
 under the assumption that
 the Fourier coefficient ${\bf g}_{\bf 0}(t)$
 of the vector field ${\bf g}$
  has strictly positive  imaginary parts, i.e., there exists a  positive constant $\mu_0$ such that
    \begin{equation}
\label{assumption2} \min_{1\le j\le d} \Im  g_{j, {\bf 0}}(t)\geq \mu_0 >0
  \end{equation}
for all $\ t\ge 0$,     and     the initial state vector ${\bf x}_0$  satisfies
     \begin{equation}\label{maintheoremanalytic.thm2.eq1}
\|\exp(i{\bf x}_0)\|_2< \frac{\mu_0 R} {D_0+\mu_0};
 \end{equation}
see Theorem \ref{maintheoremanalytic.thm2} and Corollary \ref{maintheoremanalytic.cor2}.
We remark that, under the assumptions \eqref{assumption2} and \eqref{maintheoremanalytic.thm2.eq1} on the governing vector field ${\bf g}$ and the initial state ${\bf x}_0$, the imaginary parts of all components of the state vector ${\bf x}(t)$ diverge to positive infinity as $t \to \infty$. Additionally, the dynamical system associated with the new state vector $e^{i{\bf x}}$ (the exponential of the original state vector ${\bf x}$) is stable; see Lemma \ref{maintheoremanalytic.thm2.lem1}.

\smallskip

A broad category of complex dynamical systems \eqref{dynamicsystem} does not meet the analyticity requirement \eqref{assumption0}. For example, the vector field corresponding to the Kuramoto model \eqref{Kuramoto.def} fails to satisfy this condition. To lift such nonlinear dynamical systems into the realm of infinite-dimensional linear dynamical systems, we propose a novel approach by introducing an augmented state vector that includes both the state vector ${\bf x}$ and its negative, $-{\bf x}$. This leads to the formation of the extended state vector $\tilde {\bf x}=[{\bf x}^T, -{\bf x}^T]^T$ and a corresponding dynamical system for the extended state vector; see \eqref{function.multiple.def1}. We demonstrate that the complex dynamical system \eqref{function.multiple.def1} associated with the extended state vector $\tilde {\bf x}$ has a governing vector field $\tilde {\bf g}$ with Fourier coefficients that decay exponentially at a uniform rate, fulfilling the criteria in \eqref{assumption0}; see \eqref{function.multiple.eq4} and \eqref{function.multiple.eq5}. Following the lifting scheme in Section \ref{subsec:Carleman-Fourier}, we introduce the Carleman-Fourier linearization of the complex dynamical system \eqref{dynamicsystem} with the corresponding vector field ${\bf g}$ satisfying Assumption \ref{assump-1} for some $D_0 > 0$ and $R > 1$; see \eqref{Carleman.multiple.def}. In this paper, we show that the finite-section approximation \eqref{Carleman.multiple.eq8} of the Carleman-Fourier linearization \eqref{Carleman.multiple.def} provides an exponential approximation to the solution ${\bf x}$ of the original complex dynamical system \eqref{dynamicsystem} over the time range $[0, \tilde T^*_{CF})$, provided that certain conditions are met.
  \begin{equation} \label{carlemanfourier.generalrequirement}
  e<R~~ {\rm and} ~~ \max \{ \|\exp(i{\bf x}_0)\|_\infty, \|\exp(-i{\bf x}_0)\|_\infty\} < R/e,
  \end{equation}
  where
  \begin{equation}\label{carlemanfourier.generaltimerequirement}
  \tilde T_{CF}^*=\frac{e-1}{2D_0(2e-1)} \big(\ln R-1-\ln \max \{\|\exp(i{\bf x}_0)\|_\infty,  \|\exp(-i{\bf x}_0)\|_\infty\}\big).
  \end{equation}
  The exponential  convergence rate is given by
  \begin{equation}\label{carlemanfourier.generalconvergencerate}
  \tilde r_{CF}(t)=e^{2D_0t} \left(\frac{e\max \big\{ \|\exp(i{\bf x}_0)\|_\infty, \|\exp(-i{\bf x}_0)\|_\infty\big\}}{R}\right)^{(e-1)/{(2e-1)}};
  \end{equation}
  see \eqref{maintheoremanalytic.cor3.eq2}, Theorem \ref{maintheoremanalytic.thm3}, and
 Corollary \ref{maintheoremanalytic.cor3}.

  \noindent {\bf Main contributions.} The primary contributions of this paper are summarized as follows.
\begin{itemize}
 \item[{(i)}] Originally developed for real dynamical systems, Carleman linearization is a powerful method for analyzing system behavior near the origin. In this paper, we extend Carleman linearization to complex dynamical systems described by \eqref{dynamicsystem} and demonstrate that the first block of the finite-section approximation of this linearization converges exponentially to the state vector of the original system within a specific time range. Our theoretical convergence results in Section \ref{carleman.subsection} and numerical demonstrations in Section \ref{demonstration.section} indicate that traditional Carleman linearization provides exceptionally accurate linearization for complex dynamical systems over a certain time interval, particularly when the initial state is close to the origin, where low-degree polynomial terms dominate the system's dynamics.
  \item[{(ii)}]  Given that the periodic vector field ${\bf g}$ of the complex dynamical system \eqref{dynamicsystem} can be well-approximated by trigonometric polynomials, a natural approach to linearization is to use exponentials $e^{i{\pmb \alpha} {\bf x}}$ for $  {\pmb \alpha}\in {\mathbb Z}^d\backslash \{0\}$, lifting the complex dynamical system \eqref{dynamicsystem} into an infinite-dimensional linear dynamical system. However, the state matrix of the resulting system does not represent a bounded operator on the Hilbert space of square-summable sequences and, unlike in traditional Carleman linearization, it lacks a block upper-diagonal structure \cite{moteesun2024}. We observe that, under the additional analyticity condition \eqref{assumption0} on the vector field ${\bf g}$, the dynamical system associated with the exponential ${\bf w}:=e^{i{\bf x}}$ of the state vector ${\bf x}$ in the complex dynamical system \eqref{dynamicsystem} has a governing vector field that is analytic with respect to the new variable ${\bf w}$ in a small neighborhood of the origin; see \eqref{complexdynamic.def2}. Based on this observation, we use the Fourier basis $e^{i{\pmb \alpha} {\bf x}}$ for ${\pmb \alpha}\in {\mathbb Z}_{++}^d$ to lift the complex dynamical system \eqref{dynamicsystem}, with its governing vector field satisfying the analyticity condition \eqref{assumption0}, into an infinite-dimensional linear dynamical system with a state matrix that has a block upper-diagonal structure. We refer to this lifting scheme \eqref{Carleman.eq5} as the Carleman-Fourier linearization; see Section \ref{subsec:Carleman-Fourier}.

  \item[{(iii)}]   Utilizing the Fourier system $e^{i{\pmb \alpha} {\bf x}}$ for ${\pmb \alpha} \in {\mathbb Z}_{++}^d$, instead of monomials ${\bf x}^{\pmb \alpha}$ for ${\pmb \alpha} \in {\mathbb Z}_{++}^d$, often yields a sparse representation of the periodic vector field ${\bf g}$. This approach leads to a ``sparse'' representation in the lifting of the dynamical system \eqref{dynamicsystem}. The effective use of Fourier basis functions proves highly capable of capturing both periodic and nonlinear behaviors in the complex dynamical system \eqref{dynamicsystem}. In this paper, we assess the accuracy of linearized models through finite-section approximations of Carleman-Fourier linearization. We demonstrate that truncating the infinite-dimensional linear systems obtained from Carleman-Fourier linearization at a suitably large length provides accurate approximations of the original complex dynamical system \eqref{dynamicsystem}. As shown in Theorems \ref{maintheoremanalytic.thm1} and \ref{maintheoremanalytic.thm2} and verified through numerical simulations in Section \ref{demonstration.section}, we establish that the state vector ${\bf x}$ of the complex dynamical system \eqref{dynamicsystem}, with a governing vector field satisfying \eqref{assumption0}, can be approximated exponentially using a finite-section approach to the Carleman-Fourier linearization \eqref{Carleman.eq5}.

\item[{(iv)}] The analytic requirement \eqref{assumption0} is not always satisfied for the governing vector field of a complex dynamical system. For instance, the first-order Kuramoto model \eqref{Kuramoto.def}, which has been widely used to analyze the dynamical behaviors of coupled oscillators, does not meet this criterion. We observe, however, that for the complex dynamical system \eqref{dynamicsystem}, the extended state vector $\tilde {\bf x} = [{\bf x}^T, -{\bf x}^T]$ satisfies a dynamical system whose governing periodic vector field meets the analytic condition \eqref{assumption0}. We then expand our approach in Section \ref{sec:finite-section}  to incorporate Carleman-Fourier linearization and its finite-section approximations for complex dynamical systems \eqref{dynamicsystem} with governing periodic vector fields that exhibit exponentially decaying Fourier coefficients; see Section \ref{multiple.section}. Consequently, we show that the finite-section approximation of the Carleman-Fourier linearization achieves exponential convergence when the initial state ${\bf x}_0$ is real-valued and the exponential decay rate $1/R$ for the Fourier coefficients of the periodic vector field ${\bf g}$ is strictly less than $1/e$. We note that a similar conclusion regarding exponential convergence is established in \cite{moteesun2024} when the initial state ${\bf x}_0$ and the periodic vector field are real-valued, and the exponential decay rate $1/R$ for the Fourier coefficients of the periodic vector field is strictly less than $1$.

\item [{(v)}]
The Carleman-Fourier linearization presented in Sections \ref{sec:finite-section} and \ref{multiple.section}  offers several key advantages. We observe that the requirements, time range, and convergence ratio for the exponential convergence of the finite-section approximation to the proposed Carleman-Fourier linearization depend solely on the imaginary parts $\Im {\bf x}_0$ of the initial state ${\bf x}_0$. This dependency arises because the governing vector field ${\bf g}$ can be well-approximated by trigonometric polynomials when the imaginary part of the state vector remains close to the origin, while the real parts of the state vector can be chosen freely. 
This capability is especially valuable for examining system behavior beyond the immediate vicinity of equilibrium. Carleman-Fourier linearization provides precise linearizations for systems with periodic vector fields over larger neighborhoods around the equilibrium point (the origin), surpassing traditional Carleman linearization in this regard. Specifically, for the complex dynamical system \eqref{dynamicsystem} with a governing vector field ${\bf g}$ satisfying Assumption \ref{assump-1} with $D_0 > 0$ and $R \ge e^{e/(e-1)} \approx 4.8646$, we have:
     $$\qquad \quad  \big\{{\bf x}_0\in {\mathbb C}^d\ \big|\  \|{\bf x}_0\|_\infty< e^{-1}\ln R\big\}
          \subset   \big\{{\bf x}_0\in {\mathbb C}^d\ \big|\  \max\{\|e^{i{\bf x}_0}\|_\infty, \|e^{-i{\bf x}_0}\|_\infty\}    <R/e\big\},
                $$
cf. \eqref{maintheorem0.eq1} and \eqref{carlemanfourier.generalrequirement}. Moreover, if the initial state ${\bf x}_0$ is far from the origin, such as $1 \le \|{\bf x}_0\|_\infty < e^{-1}\ln R$, the finite-section approximation of Carleman-Fourier linearization achieves greater accuracy over extended time intervals compared to Carleman linearization: \begin{equation}\label{comparison.eq00} T_C^* \le \tilde T_{CF}^*\ \ {\rm and} \ \ \tilde r_{CF}(t)\le r_C(t)
\end{equation}
for all $0\le t\le T_C^*$, where the exponential convergence ranges $T_C^*$ and $\tilde T_{CF}^*$ are defined in \eqref{Tstar0.def} and \eqref{carlemanfourier.generaltimerequirement}, and the convergence rates $r_C(t)$ and $\tilde r_{CF}(t)$ are given in \eqref{CarlemanConvergencerate} and \eqref{carlemanfourier.generalconvergencerate}, respectively; see Section \ref{comparison.eq00.pfsection} for a detailed proof. Consequently, the proposed Carleman-Fourier linearization provides a more effective approximation to the complex dynamical system \eqref{dynamicsystem} than traditional Carleman linearization, significantly enhancing system predictability over larger regions and longer time intervals; see the numerical simulation in Section \ref{kuramoto.section} for the Kuramoto model. This improvement is crucial for achieving a comprehensive understanding and management of dynamical systems governed by periodic vector fields, resulting in analyses that are more robust and reliable, especially for systems where accurate long-term predictions are essential.

\end{itemize}

\vspace{0.5cm}

\noindent {\bf Organization.} In Section \ref{carleman.subsection}, we examine the Carleman linearization of the complex dynamical system \eqref{dynamicsystem} with a periodic vector field that satisfies Assumption \ref{assump-1} for some $D_0 > 0$ and $R > 1$. We establish the exponential convergence of the finite-section approximation of the lifted infinite-dimensional linear dynamical system \eqref{carlemannonhomo.eq3}. In Section \ref{sec:finite-section}, we introduce Carleman-Fourier linearization for the complex dynamical system \eqref{dynamicsystem} when the periodic vector field ${\bf g}$ meets the analyticity condition \eqref{assumption0} and Assumption \ref{assump-1} for some $D_0 > 0$ and $R > 0$. We also prove the exponential convergence of the finite-section approximation of the Carleman-Fourier linearization and show that this convergence is effective over both short and extended time ranges. In Section \ref{multiple.section}, we extend the methodology developed in Section \ref{sec:finite-section}  to apply Carleman-Fourier linearization and its finite-section approximations to nonlinear dynamical systems with periodic vector fields that exhibit multiple fundamental frequencies and exponentially decaying Fourier coefficients. Section \ref{demonstration.section} provides examples demonstrating the efficacy of Carleman-Fourier linearization, Carleman linearization, and their finite-section approximations for our illustrative complex dynamical systems \eqref{simpleexample2.eq1} and \eqref{Kuramoto.def}. All proofs are gathered in Section \ref{proof.section}.

\section{Carleman Linearization using Monomials}
\label{carleman.subsection}

In this section,  we introduce  Carleman linearization  \eqref{carlemannonhomo.eq3} for the complex dynamical system
\eqref{dynamicsystem} with the vector field ${\bf g}$ satisfying Assumption \ref{assump-1} for some  $D_0>0$ and  $R>1$.
Under the above assumption on the vector field ${\bf g}$, its Fourier coefficients ${\bf g}_{\pmb \alpha}(t), {\pmb \alpha}\in {\mathbb Z}^d$,
enjoy exponential decay, and its  Fourier expansion \eqref{complexdynmaic.fdef} converges to the vector field ${\bf g}$
if  the  state vector ${\bf x}$ satisfies 
 \begin{equation}\label{assumption0-2}
 \max \big\{\|\exp(i{\bf x})\|_\infty, \|\exp(-i{\bf x})\|_\infty\big\}< R,
 \end{equation}
 cf.  \eqref{maintheorem0.eq1} and \eqref{carlemanfourier.generalrequirement} on the initial vector ${\bf x}_0$
 of the complex dynamical system \eqref{dynamicsystem}.

Carleman linearization   is predominantly suitable for systems with the corresponding vector fields being effectively approximated with low-degree polynomials.
This well-approximation property
allows for finite-section approximations of the resulting infinite-dimensional linear system with minimal errors
 \cite{Amini2022, amini2021error, forets2017explicit, forets2021reachability, liu2021efficient, minisini2007carleman, pruekprasert2022moment, rauh2009carleman, surana2024,  Wu2024}.  The main result of this section is  Theorem \ref{maintheorem0}, where we show that the first block  in the finite-section approximation
 \eqref{Athomogenous.def} of the Carleman linearization has exponential convergence
if the vector field ${\bf g}$ and the initial ${\bf x}_0$ satisfy
\eqref{zeroequil.eq}  and \eqref{maintheorem0.eq1} respectively.

\medskip

Using the  Maclaurin expansion of exponential function $e^{i{\pmb \alpha}{\bf x}}=\sum_{{\pmb \beta}\in {\mathbb Z}_+^d} i^{|{\pmb \beta}|}
 {\pmb \alpha}^{\pmb \beta}{\bf  x}^{\pmb \beta} /{\pmb \beta}!$, one can rewrite the dynamical system \eqref{complexdynamic.def} as
\begin{eqnarray} \label{polynomial.def1}
\dot{\x} = {\bf g}(t, {\bf x}) &\hskip-0.08in =  & \hskip-0.08in
    \sum_{{\pmb \beta}\in {\mathbb Z}_+^d} \frac{i^{|{\pmb \beta}|}}{{\pmb \beta}!}
\left(~\sum_{{\pmb \alpha}\in {\mathbb Z}^d} {\bf g}_{\pmb \alpha}(t) \Sp {\pmb \alpha}^{\pmb \beta}~\right) {\bf x}^{\pmb \beta}
=:\sum_{{\pmb \beta}\in {\mathbb Z}^d_+}  {\bf f}_{\pmb \beta}(t) \Sp {\bf x}^{\pmb \beta}
\end{eqnarray}
with initial condition ${\bf x}(0)={\bf x}_0\in {\mathbb C}^d$.
The Maclaurin expansion of the vector field ${\bf g}$ is well-defined
and
\begin{eqnarray*}
 \sum_{{\pmb \beta}\in {\mathbb Z}^d_+}  \|{\bf f}_{\pmb \beta}(t)\|_1 |{\bf x}^{\pmb \beta}|
& \hskip-0.08in \le & \hskip-0.08in  \sum_{{\pmb \beta}\in {\mathbb Z}_+^d} ({\pmb \beta}!)^{-1}
\sum_{{\pmb \alpha} \in {\mathbb Z}^d} \|{\bf g}_{\pmb \alpha}(t)\|_1   |{\pmb \alpha}^{\pmb \beta}| |{\bf  x}^{\pmb \beta}|\le
 \sum_{{\pmb \alpha} \in {\mathbb Z}^d}\|{\bf g}_{\pmb \alpha}(t)\|_1  e^{|{\pmb  \alpha}| \|{\bf x}\|_\infty}\\
& \hskip-0.08in \le & \hskip-0.08in  D_0 \sum_{k=0}^\infty  R^{-k} e^{k \|{\bf x}\|_\infty}=  \frac{D_0} {1- R^{-1} \exp(\|{\bf x}\|_\infty)} <\infty,
 \end{eqnarray*}
  provided that the state vector ${\bf x}$ 
  satisfies
\begin{equation} \label{assumption0+2}\|{\bf x}\|_\infty<\ln R,\end{equation}
 cf. the requirement  \eqref{maintheorem0.eq1}
on the initial state  ${\bf x}_0$
  of the  
  dynamical system
\eqref{dynamicsystem}.

 Under Assumption \ref{assump-1} for the vector field ${\bf g}$ with $D_0>0$ and $R>1$, one may verify that its
  Maclaurin coefficients  ${\bf f}_{\pmb \beta}(t), {\pmb \beta}\in {\mathbb Z}_+^d$ in \eqref{polynomial.def1} have the uniform exponential decay property, cf. \cite[Assumption 2.1]{Amini2022}. In particular, we have
\begin{subequations} \label{polynomial.def2}
   \begin{equation} \label{polynomial.def2b}
   \|{\bf f}_{{\bf 0}}(t)\|_1\le  \sum_{{\pmb \alpha}\in {\mathbb Z}^d} \|g_{ {\pmb \alpha}}(t)\|_1
 \le D_0\sum_{l=0}^\infty  R^{-l}= \frac{D_0 R}{R-1}\le  \frac{D_0 R}{\ln R}
 \end{equation}
 and
\begin{eqnarray} \label{polynomial.def2a}
   \sum_{{\pmb \beta}\in {\mathbb Z}_{+, k}^d}
\|{\bf f}_{ {\pmb \beta}}(t)\|_1 &\hskip-0.08in  \le & \hskip-0.08in
 \sum_{{\pmb \beta}\in {\mathbb Z}_{+, k}^d} \frac{1}{{\pmb \beta}!}
\sum_{l=1}^\infty \sum_{{\pmb \alpha}\in {\mathbb Z}_l^d}
\|{\bf g}_{{\pmb \alpha}}(t)\|_1 |{\pmb  \alpha}^{\pmb \beta}|\nonumber\\
& \hskip-0.08in \le  & \hskip-0.08in 
  D_0\sum_{l=1}^\infty \frac{l^k}{k!}  R^{-l}  
\le   \frac{D_0 R}{(\ln R)^{k+1}},\ \  k\ge 1.
\end{eqnarray}
 \end{subequations}

    For a given $k\ge 0$, let  ${\mathbb Z}_{ k}^d$ (resp.  $\mathbb{Z}_{+,k}^d:={\mathbb Z}_+^d\cap {\mathbb Z}_k^d $)  be the set of all (resp. nonnegative) integer indices ${\pmb \alpha}\in {\mathbb Z}^d$ of order $|{\pmb \alpha}|=k$.
Define  the new state variables ${\bf z}_k=[{\bf x}^{\pmb \alpha}]_{{\pmb \alpha}\in {\mathbb Z}_{+,k}^d}$, which contains all the monomials of order $k\ge 1$, and
 block matrices  ${\bf A}_{k, l}(t), k, l\ge 1$,  of size $ { {k+d-1}\choose d-1}\times { {l+d-1}\choose d-1}$ by
\begin{equation}  \label{carleman.eq2}
	{\bf A}_{k, l}(t)= \Big[\sum_{j=1}^d \alpha_j {\bf  f}_{j, \pmb\beta-\pmb\alpha+{\bf e}_j}(t)\Big]_{{\pmb \alpha}\in {\mathbb Z}_{+,k}^d,~ {\pmb \beta}\in {\mathbb Z}_{+,l}^d},
\end{equation}
where we set
${\bf f}_{\pmb \beta}(t)={\bf 0}$ for $ {\pmb \beta}\in {\mathbb Z}^d\backslash {\mathbb Z}_+^d$, write ${\bf f}_{\pmb \beta}(t)=[f_{1, {\pmb \beta}}(t), \ldots, f_{d, {\pmb \beta}}(t)]^T$ and ${\pmb \alpha}=[\alpha_1, \ldots, \alpha_d]^T\in {\mathbb Z}^d$.
With the uniform exponential decay property \eqref{polynomial.def2}
for the Maclaurin coefficients of the vector field ${\bf g}$,
 we propose the  {\bf Carleman linearization} of  the complex dynamical system \eqref{dynamicsystem}  as follows:
\begin{equation}  \label{carlemannonhomo.eq3}
	\dot{\bf z}(t)= {\bf A}(t) {\bf z}(t)+ {\bf b}(t), \ t\ge 0,
\end{equation}
 with the initial ${\bf z}(0)=[{\bf z}_k(0)]_{k\ge 1}$, where
${\bf z}(t)=[{\bf z}_1^T(t), {\bf z}_2^T(t), \ldots,  {\bf z}_N^T(t), \ldots]^T$ is
the infinite-dimensional state vector,
the  nonhomogeneous term ${\bf b}=\big[({\bf f}_{\bf 0}(t))^T, {\bf 0}, \ldots\big]^T$ is determined by the first
Maclaurin coefficient ${\bf f}_{\bf 0}(t)={\bf g}(t, {\bf 0})$,
and
\begin{equation}\label{carleman.matrixA}
{\bf A}(t)
= \left[\begin{array}{ccccccc}
		{\bf A}_{1, 1}(t) & {\bf A}_{1, 2}(t) & {\bf A}_{1,3}(t) & \cdots  &  {\bf A}_{1, N-1}(t)  & {\bf A}_{1, N}(t) & \cdots\\
			  {\bf A}_{2, 1}(t)
		& {\bf A}_{2, 2}(t)& {\bf A}_{2, 3}(t) & \cdots & {\bf A}_{2, N-1}(t) & {\bf A}_{2, N}(t) & \cdots\\
		& {\bf A}_{3, 2}(t)& {\bf A}_{3, 3}(t) & \cdots &  {\bf A}_{3, N-1}(t) & {\bf A}_{3, N}(t) & \cdots\\
		& &\ddots & \ddots & \vdots &  \vdots & \ddots\\
	& 	& &\ddots   & {\bf A}_{N-1, N-1}(t) & {\bf A}_{N-1, N}(t)  & \ddots\\
		& & & &  {\bf A}_{N, N-1}(t) &  {\bf A}_{N, N}(t) & \ddots\\
		& & & & &  \ddots & \ddots
	\end{array}\right]. \end{equation}

The  state matrix ${\bf A}(t)$  
of the infinite-dimensional linear dynamical system
\eqref{carlemannonhomo.eq3}
 is {\bf not}  a bounded operator on $\ell^2({\mathbb Z}_{++}^d)$, the Hilbert space of all square-summable sequences on
${\mathbb Z}_{++}^d$. This prevents us to apply existing theory on Hilbert space directly to analyze the
Carleman linearization.
An alternative approach to solve the infinite-dimensional  linear system \eqref{carlemannonhomo.eq3} is to consider its {\bf finite-section approximation} of order $N$, which is described as follws:
 \begin{eqnarray} \label{Athomogenous.def}
\hskip-0.02in	\left [\begin{array}{c}
		\dot{\bf y}_{1, N}\\
		\dot{\bf y}_{2, N}\\
		\vdots\\
\dot{\bf y}_{N-1, N}\\
		\dot{\bf y}_{N, N}
	\end{array}\right]
\hskip-0.08in  & = &  \hskip-0.08in \left[\begin{array}{cccccc}
		{\bf A}_{1, 1}(t) & {\bf A}_{1, 2}(t)  & \cdots  &  {\bf A}_{1, N-1}(t)  & {\bf A}_{1, N}(t) \\
			  {\bf A}_{2, 1}(t)
		& {\bf A}_{2, 2}(t)  & \cdots & {\bf A}_{2, N-1}(t) & {\bf A}_{2, N}(t) \\
		& \ddots 
& \ddots & \vdots & \vdots   \\
	 	& &\ddots   & {\bf A}_{N-1, N-1}(t) & {\bf A}_{N-1, N}(t)  \\
		& & &  {\bf A}_{N, N-1}(t) &  {\bf A}_{N, N}(t) \\
			\end{array}\right]
\nonumber \\
\hskip-0.08in  &  &  \hskip-0.08in
\times	\left[\begin{array}{c}
		{\bf y}_{1, N}\\
		{\bf y}_{2, N}\\
		\vdots\\
		{\bf y}_{N-1, N}\\
{\bf y}_{N, N}\\
	\end{array}\right]
+
	\left[\begin{array}{c}
		{\bf f}_{\bf 0}(t)\\
		{\bf 0}\\
		\vdots\\
		{\bf 0}\\
{\bf 0}\\
	\end{array}\right],
 \end{eqnarray}
where ${\bf y}_{k, N}:= {\bf y}_{k, N}(t)$ satisfies the initial condition ${\bf y}_{k, N}(0)={\bf z}_k(0), 1\le k\le N$.

With the assumption that
 the origin ${\bf 0}$ serves as an equilibrium for the complex dynamical system \eqref{complexdynamic.def}, i.e.,
 \eqref{zeroequil.eq}
 holds,
the state matrix ${\bf A}(t)$ in \eqref{carleman.matrixA}
is  a {\bf block    upper triangular} matrix.
 Utilizing the ``block upper-triangular'' structure of the state matrix ${\bf A}$, we can show
  that the first block  ${\bf y}_{1, N}, \ N\ge 1$, in the finite-section approximation converges exponentially to the state vector  ${\bf x}\in {\mathbb C}^d$
of the original nonlinear dynamical system within a certain time range.

\begin{theorem}\label{maintheorem0}
Suppose that ${\bf x}(t)$
 is the  solution of  the  dynamical  system \eqref{dynamicsystem} with the vector field ${\bf g}(t,{\bf x})$ satisfying \eqref{zeroequil.eq} and
 Assumption
 \ref{assump-1} for some $D_0>0$ and  $R>1$,
 and ${\bf y}_{1, N}(t), N\ge 1$, is the first block  of the solution of the finite-section approximation
 \eqref{Athomogenous.def}. If  the initial  ${\bf x}(0)={\bf x}_0$ of the dynamical system \eqref{dynamicsystem}
  satisfies \eqref{maintheorem0.eq1},
	then
	\begin{equation}\label{maintheorem0.eq2}
		\|{\bf y}_{1, N}(t)-{\bf x} (t)\|_\infty\le  \frac{R}{\sqrt{2\pi}(e-1)}
		N^{-3/2} e^{D_0 R Nt/(\ln R)^2} \left(\frac{\|{\bf x}_0\|_\infty e}{\ln R}\right)^{ (e-1)N/(2e-1)}
			\end{equation}
	hold for all $0\le t\le T_C^*$ and $N\ge 1$,
	where $T_C^*$ is given in \eqref{Tstar0.def}.
\end{theorem}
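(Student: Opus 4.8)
The plan is to control the first-block error $\mathbf{e}_{1,N}:=\mathbf{y}_{1,N}-\mathbf{z}_1=\mathbf{y}_{1,N}-\mathbf{x}$, where $\mathbf{z}_k(t)=[\mathbf{x}^{\pmb\alpha}(t)]_{\pmb\alpha\in\mathbb{Z}_{+,k}^d}$ denotes the exact monomial blocks. First I would record that the equilibrium condition \eqref{zeroequil.eq} forces $\mathbf{f}_{\bf 0}=\mathbf{g}(t,\mathbf{0})=\mathbf{0}$, so the subdiagonal blocks $\mathbf{A}_{k,k-1}$ in \eqref{carleman.matrixA} vanish; consequently $\mathbf{A}_{k,l}(t)=\mathbf{0}$ for $l<k$ and, by \eqref{carleman.eq2}, every nonzero block $\mathbf{A}_{k,l}$ is assembled from Maclaurin coefficients of order $l-k+1\ge 1$. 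Since the monomial blocks solve the full infinite system $\dot{\mathbf{z}}_k=\sum_{l\ge k}\mathbf{A}_{k,l}\mathbf{z}_l$ exactly, subtracting the finite-section system \eqref{Athomogenous.def} shows that the errors satisfy a finite, block upper-triangular linear system driven solely by the truncated tail,
\[
\dot{\mathbf{e}}_{k,N}=\sum_{l=k}^{N}\mathbf{A}_{k,l}(t)\,\mathbf{e}_{l,N}-\sum_{l>N}\mathbf{A}_{k,l}(t)\,\mathbf{z}_l,\qquad 1\le k\le N,
\]
with $\mathbf{e}_{k,N}(0)=\mathbf{0}$. Writing $\Phi_N(t,s)$ for the transition matrix of this upper-triangular system and applying Duhamel's formula, the quantity to be estimated becomes $\mathbf{e}_{1,N}(t)=-\int_0^t\sum_{k=1}^N[\Phi_N(t,s)]_{1,k}\big(\sum_{l>N}\mathbf{A}_{k,l}(s)\mathbf{z}_l(s)\big)\,ds$.

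The second step is an a priori bound on the exact trajectory. Feeding the Maclaurin estimate \eqref{polynomial.def2a} into the dynamics gives $\|\mathbf{g}(t,\mathbf{x})\|_\infty\le \frac{D_0R}{\ln R}\,\frac{\|\mathbf{x}\|_\infty/\ln R}{1-\|\mathbf{x}\|_\infty/\ln R}$ as long as $\|\mathbf{x}\|_\infty<\ln R$ (cf.\ \eqref{assumption0+2}); a bootstrap/differential-inequality argument for $u(t):=\|\mathbf{x}(t)\|_\infty$, together with the hypothesis \eqref{maintheorem0.eq1}, then shows that $u$ remains below $\ln R/e$ (so that $1-u/\ln R\ge (e-1)/e$) and obeys $u(t)\le \|\mathbf{x}_0\|_\infty e^{\lambda t}$ with $\lambda=eD_0R/((e-1)(\ln R)^2)$ throughout $[0,T_C^*]$. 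The definition \eqref{Tstar0.def} of $T_C^*$ is precisely what keeps the trajectory in this regime and makes the convergence ratio $r_C(t)$ of \eqref{CarlemanConvergencerate} satisfy $r_C(t)\le 1$ there. This in turn bounds the tail blocks $\|\mathbf{z}_l(s)\|$ by the monomial multiplicity $\binom{l+d-1}{d-1}$ times $u(s)^l$.

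The third step estimates $\Phi_N$. From \eqref{carleman.eq2} and \eqref{polynomial.def2b} the diagonal blocks obey $\|\mathbf{A}_{k,k}(t)\|\lesssim kD_0R/(\ln R)^2$, so the diagonal part of the transition matrix grows like $e^{kD_0R(t-s)/(\ln R)^2}$; letting $k$ run up to $N$ produces exactly the factor $e^{D_0RNt/(\ln R)^2}$ appearing in \eqref{maintheorem0.eq2}. The off-diagonal blocks, which transport the truncated mass from level $l>N$ down to level $1$, I would control through the Dyson (iterated-integral) expansion of the upper-triangular $\Phi_N$: each factor $\mathbf{A}_{k,l}$ contributes an order-$(l-k+1)$ coefficient decaying like $(\ln R)^{-(l-k+1)}$, while the tail contributes $u(s)^l\lesssim(\ln R/e)^l$. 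Collecting the geometric factors in $k$ and $l$, bounding the binomial multiplicities, and summing the iterated-integral series leaves a series in $N$ whose dominant term is governed by $(\|\mathbf{x}_0\|_\infty e/\ln R)^{(e-1)N/(2e-1)}$; a Stirling approximation of the factorial/binomial factors then yields the prefactor $R\,N^{-3/2}/(\sqrt{2\pi}(e-1))$, giving \eqref{maintheorem0.eq2}.

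I expect the main obstacle to be the third step: controlling how the truncation error at levels $l>N$ propagates down through the coupled blocks to the first block while tracking explicit, $N$-dependent constants. Because the state matrix is unbounded (block norms grow with $k$), the Dyson-series bound for $\Phi_N$ must be weighted carefully and balanced against the decay of the high-order Maclaurin coefficients and the smallness of $u(s)^l$; the delicate summation and Stirling analysis that pins down the precise exponent $(e-1)/(2e-1)$ and the denominator $2e-1$ is where the bulk of the technical effort lies.
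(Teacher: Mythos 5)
Your proposal is sound and, in substance, it is the paper's own argument: the paper gives no separate proof of Theorem \ref{maintheorem0}, stating only that one can follow \cite{Amini2022} ``step by step'' (the same machinery is written out in Section \ref{maintheoremanalytic.thm1.pfsection} for the Fourier analogue, Theorem \ref{maintheoremanalytic.thm1}), and that machinery consists of exactly your three ingredients: the zero-initial error system for ${\bf e}_{k,N}$ driven by the truncated tail, an a priori Gronwall bound keeping $\|{\bf x}(t)\|_\infty$ below $\ln R/e$ on $[0,T_C^*]$, and a quantitative propagation estimate producing the Stirling factor $N^{-3/2}e^{N}/\sqrt{2\pi}$. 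The one organizational difference is that the paper never forms the full transition matrix $\Phi_N$: it applies Duhamel with the diagonal blocks ${\bf A}_{k,k}$ only (kernels bounded by $e^{D_0Rk(t-s)/(\ln R)^2}$, cf.\ Lemma \ref{maintheoremanalytic.lem3}), treats the strictly upper-triangular coupling as additional forcing, and closes the resulting system of integral inequalities with the induction Lemma \ref{maintheoremanalytic.lem4}, which yields $U_2+\cdots+U_N+1\le \frac{N^{N-2}}{(N-2)!}e^{D_0'Nt}$ with $D_0'=D_0R/(\ln R)^2$. Your Dyson expansion of $\Phi_N$ is precisely that iteration unrolled term by term, so the two routes are equivalent; the lemma simply packages the ``delicate summation'' you flag as the main obstacle, and it is the piece your sketch leaves unexecuted.

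Two concrete corrections to your plan. First, drop the multiplicity factors $\binom{l+d-1}{d-1}$: working with the $\ell^\infty$ norm on blocks and the Schur norm \eqref{Schurnorm.def} on matrices gives $\|{\bf z}_l(s)\|_\infty\le \|{\bf x}(s)\|_\infty^l$ and $\|{\bf A}_{k,l}{\bf v}\|_\infty\le \|{\bf A}_{k,l}\|_{\mathcal S}\,\|{\bf v}\|_\infty$ with no multiplicity at all; if the binomial factors are kept, the tail sums pick up an extra factor of order $N^{d-1}$ and the stated prefactor $\frac{R}{\sqrt{2\pi}(e-1)}N^{-3/2}$ in \eqref{maintheorem0.eq2} cannot be recovered. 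Second, the exponent $(e-1)/(2e-1)$ does not emerge from the Dyson summation or the Stirling analysis, which only produce $e^{N}N^{-3/2}$; it comes from your Step 2. Indeed, with $\lambda=eD_0R/\big((e-1)(\ln R)^2\big)$ one has, for all $0\le t\le T_C^*$,
\begin{equation*}
\|{\bf x}(t)\|_\infty\le \|{\bf x}_0\|_\infty e^{\lambda T_C^*}
=\|{\bf x}_0\|_\infty^{(e-1)/(2e-1)}\Big(\frac{\ln R}{e}\Big)^{e/(2e-1)}=:M_0<\frac{\ln R}{e},
\end{equation*}
so the tail forcing is of size $(M_0/\ln R)^N$, and combining with the combinatorial factor gives $e^{N}(M_0/\ln R)^N=\big(e\|{\bf x}_0\|_\infty/\ln R\big)^{(e-1)N/(2e-1)}$, which is exactly the rate in \eqref{maintheorem0.eq2}. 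With these two points set right, your outline coincides with the paper's (deferred) proof.
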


The exponential convergence for the  finite-section approximation  \eqref{Athomogenous.def} in Theorem \ref{maintheorem0}
is established for real dynamical systems with the corresponding vector field  is
 a time-independent polynomial \cite[Theorem 4.2]{forets2017explicit} and an analytic function around the origin \cite{Amini2022}.
We may  follow the argument used in   \cite{Amini2022} to establish the exponential convergence conclusion in Theorem \ref{maintheorem0} step by step,  and then we omit the detailed proof here.

The equilibrium point requirement \eqref{zeroequil.eq}
 is not always satisfied for the vector field ${\bf g}$ of the  dynamical system
 \eqref{dynamicsystem}, for instance, the
 complex dynamical system \eqref{simpleexample2.eq1} with $a\ne 0$ and $b\ne 1$.
However,  for the
complex dynamical system \eqref{simpleexample2.eq1} with $a(1-b)$ being proximate to zero, the shifted state vector $\tilde {\bf x}={\bf x}+\ln b$
satisfies the complex dynamical system \eqref{simpleexample2.eq1} with $b=1$ and hence 
the first block  in the finite-section approximation exhibits exponential convergence.
We conjecture that  the finite-section approximation
\eqref{Athomogenous.def} of Carleman linearization could still exhibit exponential convergence in general if the equilibrium point requirement \eqref{zeroequil.eq} is relaxed that
 ${\bf  g}(t,  {\bf 0})$ is proximate to the origin.

\section{Carleman-Fourier Linearization and Convergence of the Finite-Section Approximations}
\label{sec:finite-section}

In this section, we consider the complex dynamical system \eqref{dynamicsystem}
with the periodic vector field ${\bf g}$ satisfying \eqref{assumption0} and Assumption \ref{assump-1} for some $D_0, R>0$.
Under the above requirements on the vector field ${\bf g}$,
  the Fourier expansion
in  \eqref{complexdynmaic.fdef} converges to ${\bf g}(t, {\bf x})$ if
the  state vector ${\bf x}=[x_1, \ldots, x_d]^T\in {\mathbb C}^d$ satisfies
 \begin{equation}\label{assumption0-1}
 \min_{1\le j\le d} \Im x_j>-\ln R,\end{equation}
 and the vector field ${\bf g}$ is analytic on the
 shifted upper half plane $({\mathbb H}-i\ln R)^d$, where ${\mathbb H}$ is the upper half-plane.

In Section \ref{subsec:Carleman-Fourier}, we use Fourier basis functions
$w_{\pmb \alpha}=e^{i{\pmb \alpha} {\bf  x}}, {\pmb \alpha}\in {\mathbb Z}_{++}^d$,
to lift  the complex dynamical system \eqref{dynamicsystem} into an infinite-dimensional dynamical system \eqref{Carleman.eq5},  and we call the lifting scheme as  Carleman-Fourier linearization.
 The state matrix ${\bf B}(t)$ corresponding to the infinite-dimensional dynamical system \eqref{Carleman.eq5} is a block upper-triangular matrix, however it
does not act as a bounded operator on \(\ell^2({\mathbb Z}_{++}^d)\), see \eqref{Carlemanmatrix.property} and \eqref{Carleman.eq4b}.
A conventional approach is to employ
the finite-section approximation \eqref{Carleman.eq7} of the Carleman-Fourier linearization \eqref{Carleman.eq5}.
In Section \ref{carlemanfouriershortconvergence.subsection}, we demonstrate  that the first block ${\bf v}_{1, N}, N\ge 1$,  in the finite-section approximation \eqref{Carleman.eq7} exponentially converges to $e^{i {\bf x}}$, which represents the exponential of the state variable ${\bf x}$ in the complex dynamical system \eqref{dynamicsystem}, over a specified time range  when
the initial state vector ${\bf x}_0$ satisfies \eqref{maintheoremanalytic.thm1.eq1},
 cf. the requirement \eqref{assumption0-1} on the state vector ${\bf x}$ for the convergence of Fourier expansion of the vector field ${\bf g}$.
 The detailed conclusions on the exponential convergence of the finite-section approximation in a time range are stated in Theorem \ref{maintheoremanalytic.thm1} and Corollary \ref{maintheoremanalytic.cor1}.
In Section \ref{carlemanfourierentireconvergence.subsection},  we  establish
 the exponential convergence of ${\bf v}_{1, N}, N\ge 1$, in the finite-section approximation \eqref{Carleman.eq7}
  over the entire time range $[0, \infty)$ when
  the vector field ${\bf g}$ and the initial state vector ${\bf x}_0$ satisfies \eqref{assumption2} and \eqref{maintheoremanalytic.thm2.eq1}
  respectively.  This is elaborated further in Theorem \ref{maintheoremanalytic.thm2} and Corollary \ref{maintheoremanalytic.cor2}.

\subsection{Carleman-Fourier linearization using Fourier basis functions}\label{subsec:Carleman-Fourier}
Let ${\bf w}_k=[w_{\pmb \alpha}]_{{\pmb \alpha}\in \mathbb{Z}_{+, k}^d}$ contain all  exponentials
${w}_{\pmb \alpha}$ with ${\pmb \alpha}\in {\mathbb Z}_{+, k}^d, k\ge 1$.
By \eqref{complexdynamic.def} and \eqref{assumption0}, we have
\begin{equation}\label{complexdynamic.def2}
    \Dot{w}_{\pmb \alpha}=i w_{\pmb \alpha}
    \sum_{{\pmb \gamma} \in \mathbb{Z}^d_+} {\pmb \alpha}^T {\bf g}_{{\pmb\gamma}}(t) {w}_{{\pmb \gamma}}
    =\sum_{{\pmb\beta} \in \mathbb{Z}^d_{++}} \Big( i{\pmb \alpha}^T {\bf g}_{{\pmb\beta}-{\pmb \alpha}}(t)\Big) {w}_{{\pmb \beta}}, \ \ {\pmb \alpha}\in {\mathbb Z}^d_{++}.
\end{equation}
Regrouping all exponentials $w_{\pmb \alpha}$ of order $k\ge 1$ together, we obtain
\begin{subequations} \label{Carleman.eq1}
\begin{equation}\label{Carleman.eq1a}
    \Dot{\bf w}_k(t)=\sum_{l=1}^{\infty}{\bf B}_{k,l}(t) {\bf w}_l(t), \  t\ge 0,
\end{equation}
with the initial
\begin{equation} \label{Carleman.eq1b} {\bf w}_k(0)=\big[e^{i {\pmb \alpha} {\bf  x}_0}\big]_{{\pmb \alpha}\in \mathbb{Z}_{+,k}^d}, \ \ k\ge 1,
\end{equation}
\end{subequations}
 where for every $k, l\ge 1$,
\begin{equation}\label{Carleman.eq2}
    {\bf B}_{k,l}(t)=\Big[ \Sp \sum_{j=1}^d i \hspace{0.05cm} {\pmb \alpha}^T{\bf  g}_{{\pmb\beta
    }-{\pmb \alpha}}(t) \Sp \Big]_{{\pmb \alpha}\in \mathbb{Z}_{+,k}^d, \Sp {\pmb \beta}\in \mathbb{Z}_{+,l}^d}
\end{equation}
is a matrix of size $\binom{k+d-1}{d-1}\times \binom{l+d-1}{d-1}$.
 By  \eqref{assumption0}, one may verify that
${\bf  B}_{k,l}, 1\le l<k$, are zero matrices and ${\bf  B}_{k,k}, k\ge 1$, are diagonal matrices,
i.e.,
\begin{subequations}\label{Carlemanmatrix.property}
\begin{equation} \label{Carleman.eq3}
{\bf  B}_{k,l}(t)={\bf  0} \quad {\rm if } \quad  1\le l<k,
\end{equation}
and
\begin{equation} \label{Carleman.eq4}
{\bf  B}_{k,k}(t)={\rm diag} \big( i {\pmb \alpha}^T {\bf  g}_{ {\bf  0}}(t)\big)_{{\pmb \alpha}\in {\mathbb Z}_{+, k}^d} \quad {\rm if} \quad k\ge 1.
\end{equation}
    \end{subequations}

Define
  $  {\bf w}=[{\bf w}_1^T, {\bf w}_2^T, \ldots, {\bf w}_N^T, \ldots]^{T}$,
and
\begin{equation} \label{Carleman.eq6}
{\bf B}(t)=
\begin{bmatrix}
    {\bf B}_{1,1}(t) & {\bf B}_{1,2}(t) & \dots & {\bf B}_{1, N}(t)  & \cdots\\
      &{\bf B}_{2,2}(t) & \cdots & {\bf B}_{2, N}(t)  & \cdots\\
      & & \ddots & \vdots & \ddots\\
      & & & {\bf B}_{N,N}(t) & \cdots \\
      & & & & \ddots
\end{bmatrix}.
\end{equation}
Then we can reformulate \eqref{Carleman.eq1} in the following matrix form,
\begin{equation}\label{Carleman.eq5}
    \Dot{\bf w}(t)={\bf B}(t){\bf w}(t), \ t\ge 0,
\end{equation}
 with initial condition ${\bf w}(0)=[{\bf w}_k(0)]_{k=1}^\infty$. We call the  infinite-dimensional  dynamical system \eqref{Carleman.eq5} as {\bf Carleman-Fourier linearization} of the  finite-dimensional nonlinear dynamical system \eqref{complexdynamic.def} when
the periodic vector field ${\bf g}$
 satisfies
 \eqref{assumption0} and Assumption \ref{assump-1}.

\subsection{Exponential convergence of finite-section approximation in a time range}
\label{carlemanfouriershortconvergence.subsection}

Given two countable index sets \(X\) and \(Y\), we let \({\mathcal S}(X,Y)\) be the Banach space of matrices \(\mathbf{C}=[c(i,j)]_{i\in X, j\in Y}\) equipped with the finite Schur norm, which is defined by
\begin{align}\label{Schurnorm.def}
   \|C\|_{{\mathcal S}} :=\Sp \max \Big\{\Sp \sup_{i \in X} \sum_{j\in Y}|c(i,j)|, \Sp \sup_{j \in Y} \sum_{i\in X}|c(i,j)|\Sp \Big\}.
\end{align}
In alignment with the reasoning presented in \cite{Amini2022}, we have
\begin{equation} \label{Carleman.eq4b}
    \sup_{t\geq 0}\| {\bf  B}_{k,l}(t)\|_{{\mathcal S}}\leq D_0k R^{k-l}, \ 1\le k\le l.
\end{equation}
 From the above estimate, the state matrix \({\bf  B}(t)\) of the infinite-dimensional dynamical system \eqref{Carleman.eq5} does not act as a bounded operator on \(\ell^2({\mathbb Z}_{++}^d)\).
This makes it challenging to directly apply extant Hilbert space theories when analyzing the infinite-dimensional linear system \eqref{Carleman.eq5}.
Thus  we propose employing the conventional {\bf finite-section approximation} of the  Carleman-Fourier linearization  \eqref{Carleman.eq5}, which is given by
\begin{equation}
\label{Carleman.eq7}
\begin{bmatrix}
    \Dot{{\bf v}}_{1,N}(t) \\
    \Dot{{\bf v}}_{2,N}(t) \\
    \vdots \\
    \Dot{{\bf v}}_{N,N}(t)
\end{bmatrix}
=
\begin{bmatrix}
    {\bf B}_{1,1} (t) & {\bf B}_{1,2}(t) & \dots & {\bf B}_{1, N} (t) \\
      &{\bf B}_{2,2} (t) & \cdots & {\bf B}_{2, N}(t) \\
      & & \ddots & \vdots \\
      & & & {\bf B}_{N,N}(t)
\end{bmatrix}
\begin{bmatrix}
    {\bf v}_{1,N}(t) \\
    {\bf v}_{2,N}(t) \\
    \vdots \\
    {\bf v}_{N,N}(t)
\end{bmatrix}
\end{equation}
using the initial condition \({\bf v}_{k,N}(0)={\bf w}_k(0)\) for \(k=1, \dots, N\). The subsequent  theorem  establishes the exponential convergence of ${\bf  v}_{1, N}(t)$ for $N\ge 1$ to ${\bf w}_1(t)$ within a certain time interval. In the following theorems and corollaries, the state vector of the system \eqref{complexdynamic.def} is denoted by ${\bf x}(t)=[x_1(t), \ldots, x_d(t)]^T \in \mathbb{C}^d$ and ${\bf v}_{1, N}(t)=[v_{1, N}(t), \ldots, v_{d, N}(t)]^T \in \mathbb{C}^d$ represents the leading block in the finite-section approximation \eqref{Carleman.eq7} for \( N \geq 1 \).

\begin{theorem}\label{maintheoremanalytic.thm1}
Let us consider a nonlinear dynamical system described by \eqref{complexdynamic.def} and governed by the periodic analytic vector field ${\bf g}: \R  \times  \C^d \rightarrow  \C^d$ satisfying \eqref{assumption0} and Assumption \ref{assump-1}. If the initial state vector ${\bf x}_0=[x_{0, 1}, \cdots, x_{0, d}]^T$ conforms to \eqref{maintheoremanalytic.thm1.eq1}, then
\begin{equation} \label{maintheoremanalytic.thm1.eq2}
 \max_{1\le j\le d} \hspace{0.05cm} \big| {v}_{j, N}(t) \hspace{0.05cm} e^{-ix_j(t)}-1\big| \leq C_0 N^{-3/2} \hspace{0.05cm} e^{ D_0tN}
 \Big(\frac{e
 \|\exp(i{\bf x}_0)\|_\infty}{R }\Big)^{(e-1)N/(2e-1)},  \ 0\le t\le T_{CF}^*,
\end{equation}
where the constants \( D_0 \) and \( R \) are delineated in \eqref{assumption1}, and \( T^*_{CF} \)  is given in \eqref{maintheoremanalytic.thm1.eq4}, and
 and \( C_0 \) is  defined by
\begin{equation}
C_0 = \frac{1}{\sqrt{2\pi} (e-1)} \exp \left( \frac{3e-1}{2e-1} 
\max_{1\le j\le d} \Im x_{0, j} \Sp + \frac{3e-1}{2e-1} \Sp \ln R- \frac{e}{2e-1} \right).
\end{equation}
\end{theorem}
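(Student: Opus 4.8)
The plan is to recognize that \eqref{maintheoremanalytic.thm1.eq2} is exactly the statement that each coordinate of the normalized first-block error decays like $N^{-3/2}$ times the $N$-th power of the rate $r_{CF}(t)$ in \eqref{convergencerateCarlemanFourier}. Since the first block ${\bf w}_1(t)$ consists of the entries $w_{{\bf e}_j}(t)=e^{ix_j(t)}$, I would first rewrite the target as
\[
\big|v_{j,N}(t)e^{-ix_j(t)}-1\big| = \big|w_{{\bf e}_j}(t)-v_{j,N}(t)\big|\,e^{\Im x_j(t)},
\]
so that it suffices to bound the first block of the error ${\bf e}_k := {\bf w}_k-{\bf v}_{k,N}$, weighted by $e^{\Im x_j}$. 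I would also record the algebraic identity $r_{CF}(t)^N = e^{D_0tN}\big(e\|\exp(i{\bf x}_0)\|_\infty/R\big)^{(e-1)N/(2e-1)}$ and the equivalence $r_{CF}(t)<1 \Leftrightarrow t<T^*_{CF}$, which makes transparent why \eqref{maintheoremanalytic.thm1.eq4} is the correct horizon and reduces the whole theorem to proving the per-coordinate bound $C_0 N^{-3/2} r_{CF}(t)^N$.

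Next I would establish a priori control of the true Carleman–Fourier trajectory. Writing $\rho(t):=\|\exp(i{\bf x}(t))\|_\infty=\exp(-\min_j\Im x_j(t))$, the analyticity \eqref{assumption0} and Assumption \ref{assump-1} give $|w_{\pmb\alpha}(t)|\le\rho(t)^{|{\pmb\alpha}|}$ and $\|\dot{\bf x}(t)\|_\infty\le D_0/(1-\rho(t)/R)$, from which a differential inequality for $\ln\rho(t)$ bounds the growth of $\rho$ on $[0,T^*_{CF}]$ and keeps $\rho(t)/R$ below the threshold needed for all block series to converge. This step simultaneously supplies the block bounds $\|{\bf w}_l(t)\|_\infty\le\rho(t)^l$ used later, and it is where the initial condition \eqref{maintheoremanalytic.thm1.eq1} enters.

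I would then derive the error recursion. Subtracting \eqref{Carleman.eq7} from \eqref{Carleman.eq1a} and using the block upper-triangular structure \eqref{Carlemanmatrix.property}, for $1\le k\le N$,
\[
\dot{\bf e}_k=\sum_{l=k}^N{\bf B}_{k,l}(t){\bf e}_l+{\bf s}_k(t),\qquad {\bf s}_k(t):=\sum_{l=N+1}^\infty{\bf B}_{k,l}(t){\bf w}_l(t),\qquad {\bf e}_k(0)={\bf 0},
\]
so the only forcing is the truncation tail ${\bf s}_k$, controlled by the Schur bound \eqref{Carleman.eq4b} and the block bounds above. Solving this finite triangular system by Duhamel, I would peel off the diagonal blocks ${\bf B}_{k,k}={\rm diag}(i{\pmb\alpha}^T{\bf g}_{\bf 0}(t))$ of \eqref{Carleman.eq4}: each contributes a propagator of size at most $e^{kD_0t}$ for $|{\pmb\alpha}|=k$, since $\sum_j|g_{j,{\bf 0}}|\le D_0$ by \eqref{assumption1}, and this is the origin of the factor $e^{D_0tN}$. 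Iterating the off-diagonal couplings from the truncation boundary down to the first block, the factors $R^{k-l}$ in \eqref{Carleman.eq4b} together with the block growth $\rho^l$ produce a geometric-in-$N$ term in $\|\exp(i{\bf x}_0)\|_\infty/R$.

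The main obstacle, as in the approach of \cite{Amini2022} that we follow, is the weighted combinatorial estimate controlling how the tail source propagates through the superdiagonal chain: one must sum the resulting multi-index series, bound the iterated time integrals, and invoke Stirling's formula $k!\sim\sqrt{2\pi k}(k/e)^k$ to extract the prefactor $N^{-3/2}/(\sqrt{2\pi}(e-1))$ and the numerical constant $e$ inside $\big(e\|\exp(i{\bf x}_0)\|_\infty/R\big)$. Balancing the diagonal growth $e^{D_0tN}$ against this geometric decay through the optimal choice of the splitting exponent is what produces the precise ratio $(e-1)/(2e-1)$, the horizon $T^*_{CF}$, and the explicit constant $C_0$; carrying out this balancing while tracking every constant uniformly on $[0,T^*_{CF}]$ is the delicate part of the argument. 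Conceptually, this is just the Carleman-linearization convergence statement (in the spirit of Theorem \ref{maintheorem0}) applied to the system \eqref{complexdynamic.def2} for ${\bf w}=e^{i{\bf x}}$, whose monomials are the $w_{\pmb\alpha}$ and whose origin is an equilibrium, which is why the same machinery applies.
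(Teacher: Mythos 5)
Your proposal is correct and follows essentially the same route as the paper: an a priori bound on $\rho(t)=\|\exp(i{\bf x}(t))\|_\infty$ valid precisely on $[0,T_{CF}^*]$ (the paper's Lemma \ref{maintheoremanalytic.lem2}), the triangular error recursion with zero initial data forced only by the truncation tail, Duhamel with the diagonal propagator bound $e^{kD_0(t-s)}$ (Lemma \ref{maintheoremanalytic.lem3}), the Schur-norm/geometric normalization in powers of $R$, the combinatorial Gronwall-plus-Stirling step from \cite{Amini2022} yielding the $N^{-3/2}$ prefactor (Lemma \ref{maintheoremanalytic.lem4}), and the final reweighting by $e^{\Im x_j(t)}$ via the drift bound on $\Im {\bf x}$. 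The only cosmetic differences are the sign convention for the error blocks and phrasing the a priori control as a differential inequality for $\ln\rho$ rather than the paper's integral Gronwall bootstrap.
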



 For a 
 proof of  Theorem \ref{maintheoremanalytic.thm1}, we refer to Section \ref{maintheoremanalytic.thm1.pfsection}.

\smallskip

Take $T^{**}\le T_{CF}^*$ and select  a sufficiently large order \(N\) in the finite-section approximation \eqref{Carleman.eq7}, such that \(C_0 \Sp N^{-3/2} \Sp e^{ D_0(T^{**}-T_{CF}^*)N}<1\). Then  it  follows directly from \eqref{maintheoremanalytic.thm1.eq2} that $v_{j,N}(t)\ne 0, 1\le j\le d$. Consequently, we can express
\begin{equation} \label{maintheoremanalytic.cor1.eq0}
v_{j, N}(t) = e^{i\xi_{j, N}(t)}\ \ {\rm with} \ \ \xi_{j, N}(0) = x_{0, j}, \ 1\le j\le d.
\end{equation}
Invoking Theorem \ref{maintheoremanalytic.thm1} and noting that
$ |z \ {\rm mod} \ 2\pi|\le  4 \epsilon$ for all $z\in {\mathbb C}$ with $|e^{iz}-1|\le \epsilon\le 1/2$, 
we deduce that \(\xi_{j, N}(t), 1\le j\le d\) offers an accurate approximation of the state vector for the complex dynamical system \eqref{complexdynamic.def}.

\begin{corollary}\label{maintheoremanalytic.cor1}
Given the assumptions of Theorem \ref{maintheoremanalytic.thm1} and the initial condition ${\bf x}_0$ for the system \eqref{complexdynamic.def}, let us consider the time range \(T_{CF}^*\) as described in Theorem \ref{maintheoremanalytic.thm1} and a chosen \(T^{**} \le  T_{CF}^*\). If the order \(N\) of the finite-section approximation \eqref{Carleman.eq7} satisfies
\begin{equation}\label{maintheoremanalytic.cor1.eq1}
C_0 \Sp N^{-3/2} \Sp e^{ - D_0(T_{CF}^{*}-T^{**})N} \Sp \leq \Sp \frac{1}{2},
\end{equation}
where \(C_0\) and \(D_0\) are constants from Theorem \ref{maintheoremanalytic.thm1}, then
\begin{equation} \label{maintheoremanalytic.cor1.eq2}
\sup_{1\le j\le d} \Sp |\xi_{j, N}(t) - x_j(t)| \Sp \leq \Sp 4 \Sp
 C_0 \Sp N^{-3/2} \Sp e^{ D_0tN}
 \Big(\frac{e
 \|\exp(i{\bf x}_0)\|_\infty}{R }\Big)^{(e-1)N/(2e-1)}
\end{equation}
holds for all \(0 \leq t \leq T^{**}\) with \(\xi_{j, N}(t), 1\le j\le d\),  given in \eqref{maintheoremanalytic.cor1.eq0}.
\end{corollary}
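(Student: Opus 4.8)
The plan is to feed the estimate of Theorem~\ref{maintheoremanalytic.thm1} through the substitution $v_{j,N}(t)=e^{i\xi_{j,N}(t)}$ and convert a bound on $|e^{iz}-1|$ into a bound on $|z|$ itself, the only delicate point being to rule out a ``branch jump'' in the continuous logarithm. Write
\[
\epsilon_N(t):=C_0\,N^{-3/2}\,e^{D_0 tN}\Big(\frac{e\,\|\exp(i{\bf x}_0)\|_\infty}{R}\Big)^{(e-1)N/(2e-1)},
\]
so that Theorem~\ref{maintheoremanalytic.thm1} reads $\max_{1\le j\le d}|v_{j,N}(t)e^{-ix_j(t)}-1|\le \epsilon_N(t)$ on $[0,T_{CF}^*]$. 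First I would simplify $\epsilon_N$ using \eqref{maintheoremanalytic.thm1.eq4}: since $\tfrac{e-1}{2e-1}\big(\ln R-\ln\|\exp(i{\bf x}_0)\|_\infty-1\big)=D_0T_{CF}^*$, one gets $\big(e\|\exp(i{\bf x}_0)\|_\infty/R\big)^{(e-1)/(2e-1)}=e^{-D_0T_{CF}^*}$, hence the clean form $\epsilon_N(t)=C_0 N^{-3/2}e^{D_0(t-T_{CF}^*)N}$. In particular $\epsilon_N$ is increasing in $t$, so on $[0,T^{**}]$ its maximum is $\epsilon_N(T^{**})=C_0N^{-3/2}e^{-D_0(T_{CF}^*-T^{**})N}$, which by hypothesis \eqref{maintheoremanalytic.cor1.eq1} is at most $1/2$. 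Thus $\epsilon_N(t)\le 1/2$ throughout $[0,T^{**}]$.

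Next, since $|v_{j,N}(t)e^{-ix_j(t)}-1|\le\epsilon_N(t)\le 1/2<1$, the factor $v_{j,N}(t)$ never vanishes on $[0,T^{**}]$; being continuous and nonvanishing it admits a continuous logarithm, which fixes $\xi_{j,N}(t)$ uniquely through the normalization $\xi_{j,N}(0)=x_{0,j}$ from \eqref{maintheoremanalytic.cor1.eq0}. Setting $z_j(t):=\xi_{j,N}(t)-x_j(t)$, a continuous function with $z_j(0)=0$, we have $e^{iz_j(t)}=v_{j,N}(t)e^{-ix_j(t)}$ and therefore $|e^{iz_j(t)}-1|\le\epsilon_N(t)$. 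Invoking the elementary fact cited just before the corollary --- if $|e^{iz}-1|\le\epsilon\le 1/2$ then $z$ differs from some integer multiple of $2\pi$ by at most $4\epsilon$ --- we obtain, for each $t$, an integer $m_j(t)$ with $|z_j(t)-2\pi m_j(t)|\le 4\epsilon_N(t)$.

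The remaining, and genuinely load-bearing, step is to show $m_j(t)\equiv 0$, i.e.\ that no wrapping occurs, so the bound collapses to $|z_j(t)|\le 4\epsilon_N(t)$, which is exactly \eqref{maintheoremanalytic.cor1.eq2}. Here I would exploit the radius control $4\epsilon_N(t)\le 2$: the candidate disks $\{z:|z-2\pi m|\le 4\epsilon_N(t)\}$ have radius at most $2$ while their centers are spaced $2\pi>4$ apart, so they are pairwise disjoint and $m_j(t)$ is \emph{uniquely} determined at every $t$. A short continuity argument then shows $m_j$ is locally constant: for $t$ near any $t_0$, continuity of $z_j$ and of $\epsilon_N$ keeps $z_j(t)$ within distance slightly more than $2$ of the center $2\pi m_j(t_0)$, and since $z_j(t)$ must lie in one of these mutually separated disks, the triangle inequality (a jump would force $2\pi\le 4+\delta$ for arbitrarily small $\delta$, a contradiction) shows it lies in the same one, so $m_j(t)=m_j(t_0)$. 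As $[0,T^{**}]$ is connected and $m_j(0)=0$ (because $z_j(0)=0$), we conclude $m_j\equiv 0$, whence $|z_j(t)|\le 4\epsilon_N(t)$ for all $t\in[0,T^{**}]$ and all $j$, completing the proof. The main obstacle is precisely this no-branch-jump argument; everything else is bookkeeping once $\epsilon_N$ is rewritten via the definition of $T_{CF}^*$.
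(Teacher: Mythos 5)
Your proof is correct and follows essentially the same route as the paper: the paper's own argument is the paragraph preceding the corollary, which likewise combines the bound of Theorem \ref{maintheoremanalytic.thm1} (rewritten via the definition of $T_{CF}^*$ so that \eqref{maintheoremanalytic.cor1.eq1} gives $|v_{j,N}(t)e^{-ix_j(t)}-1|\le 1/2$ on $[0,T^{**}]$) with the elementary fact that $|z \bmod 2\pi|\le 4\epsilon$ whenever $|e^{iz}-1|\le\epsilon\le 1/2$. The only difference is that you make explicit the continuity/connectedness argument ruling out a jump of the winding integer, a step the paper leaves implicit; this is a welcome clarification, not a deviation.
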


\subsection{Exponential convergence of finite-section approximation in the entire time range}
\label{carlemanfourierentireconvergence.subsection}
In this subsection, we consider the exponential convergence of the first block in the finite-section approximation \eqref{Carleman.eq7} over the entire time range \( [0, \infty) \). This examination is under the conditions where the vector field \({\bf g}\)  adheres to  \eqref{assumption0}, \eqref{assumption2} and Assumption \ref{assump-1},
 and the initial state vector \({\bf x}_0=[x_{0, 1}, \ldots, x_{0, d}]^T\) of the nonlinear dynamical system \eqref{complexdynamic.def} satisfies the criterion
\eqref{maintheoremanalytic.thm2.eq1}, or equivalently
     \begin{equation}\label{maintheoremanalytic.thm2.eq1+}
\Big(\sum_{j=1}^d e^{-2\Im x_{0, j}}\Big)^{1/2}< \frac{\mu_0 R} {D_0+\mu_0}.
 \end{equation}

\begin{theorem}\label{maintheoremanalytic.thm2}
Let us consider the nonlinear dynamical system described by \eqref{complexdynamic.def} with a periodic vector field ${\bf g}: \R \times \C^d \rightarrow \C^d$ that satisfies conditions
\eqref{assumption0} and \eqref{assumption2} and Assumption \ref{assump-1}, and its initial state vector ${\bf x}_0$ meets the criteria in \eqref{maintheoremanalytic.thm2.eq1}.
Then,
\begin{equation} \label{maintheoremanalytic.thm2.eq02}
\|{\bf v}_{1, N}(t)-e^{i{\bf x}(t)}\|_\infty
\Sp \leq \Sp
\frac{D_0 R }{\mu_0} \Sp \left(\frac{(D_0+\mu_0)\|\exp(i{\bf x}_0)\|_2}{\mu_0 R}\right)^{N} \ \ {\rm for \ all}\ \ t\ge 0,
\end{equation}
 where  ${\bf x}(t)$ is
the state vector of the system \eqref{complexdynamic.def},  ${\bf v}_{1, N}(t)$ represents the leading block in the finite-section approximation \eqref{Carleman.eq7}
for \( N \geq 1 \), and
the constants $D_0, R$ and $\mu_0$ are defined in \eqref{assumption1}  and \eqref{assumption2}.  
\end{theorem}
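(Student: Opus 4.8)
The plan is to exploit the block upper-triangular structure of the Carleman--Fourier state matrix together with the dissipativity that \eqref{assumption2} forces on its diagonal blocks, treating the finite section as a forced linear system whose forcing decays geometrically in the truncation length $N$. First I would identify the target of the leading block: the entries of ${\bf w}_1$ are the exponentials $w_{{\bf e}_j}=e^{ix_j}$, so the first block of the infinite-dimensional solution of \eqref{Carleman.eq5} is exactly ${\bf w}_1(t)=e^{i{\bf x}(t)}$, and \eqref{maintheoremanalytic.thm2.eq02} is a statement about $\|{\bf w}_1-{\bf v}_{1,N}\|_\infty$. Writing ${\bf e}_{k,N}={\bf w}_k-{\bf v}_{k,N}$ for $1\le k\le N$ and subtracting \eqref{Carleman.eq7} from the first $N$ rows of \eqref{Carleman.eq5}, each error block solves
\[
\dot{\bf e}_{k,N}={\bf B}_{k,k}(t){\bf e}_{k,N}+\sum_{l=k+1}^{N}{\bf B}_{k,l}(t){\bf e}_{l,N}+{\bf r}_{k,N}(t),\qquad {\bf e}_{k,N}(0)={\bf 0},
\]
where the only inhomogeneity is the dropped tail ${\bf r}_{k,N}(t)=\sum_{l>N}{\bf B}_{k,l}(t){\bf w}_l(t)$. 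Two structural facts drive the argument: by \eqref{Carleman.eq4} and \eqref{assumption2} the diagonal block ${\bf B}_{k,k}(t)$ is diagonal with $\Re\big(i{\pmb\alpha}^T{\bf g}_{\bf 0}(t)\big)=-\sum_j\alpha_j\Im g_{j,{\bf 0}}(t)\le-\mu_0 k$, so its transition matrix $\Phi_k(t,s)$ obeys $\|\Phi_k(t,s)\|\le e^{-\mu_0 k(t-s)}$ for $t\ge s$; and by the Schur bound \eqref{Carleman.eq4b} the coupling blocks satisfy $\|{\bf B}_{k,l}(t)\|\le D_0 k R^{k-l}$ for $l\ge k$.

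Next I would assemble the a priori estimates that control the tail uniformly in $t$. The multinomial inequality gives $\|{\bf w}_l(t)\|_2\le\|{\bf w}_1(t)\|_2^{\,l}$, reducing every higher block to the first. The decisive ingredient --- and the reason the bound holds on all of $[0,\infty)$ rather than only on a finite horizon --- is Lemma \ref{maintheoremanalytic.thm2.lem1}: under \eqref{assumption2} and \eqref{maintheoremanalytic.thm2.eq1} the scalar $\|e^{i{\bf x}(t)}\|_2$ is nonincreasing, so $\|{\bf w}_1(t)\|_2\le\rho:=\|\exp(i{\bf x}_0)\|_2<\mu_0R/(D_0+\mu_0)<R$ for all $t\ge0$. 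With these facts the tail is a convergent geometric series, $\|{\bf r}_{k,N}(t)\|\le\sum_{l>N}D_0 k R^{k-l}\rho^l=D_0 k\,\rho^{N+1}R^{k-N}/(R-\rho)$, which decays like $\rho^N$ uniformly in $t$.

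I would then apply Duhamel's formula block by block, working downward from $k=N$ to $k=1$. Since $\|\Phi_k(t,s)\|\le e^{-\mu_0k(t-s)}$, the integral $\int_0^t e^{-\mu_0k(t-s)}(\cdot)\,ds$ contributes a uniform factor $1/(\mu_0 k)$, and crucially the $k$ produced here cancels the $k$ appearing in both the coupling bound $D_0kR^{k-l}$ and the tail bound. Setting $\epsilon_k:=\sup_{t\ge0}\|{\bf e}_{k,N}(t)\|_2$, this would give $\epsilon_k\le(D_0/\mu_0)\sum_{l=k+1}^N R^{k-l}\epsilon_l+D_0\rho^{N+1}R^{k-N}/(\mu_0(R-\rho))$. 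Normalizing $\epsilon_k=\frac{D_0\rho^{N+1}}{\mu_0(R-\rho)}R^{k-N}a_k$ makes the powers of $R$ telescope away, leaving the scale-free recursion $a_k\le 1+(D_0/\mu_0)\sum_{l>k}a_l$ with $a_N\le1$; an easy downward induction then yields $a_k\le(1+D_0/\mu_0)^{N-k}=\big((D_0+\mu_0)/\mu_0\big)^{N-k}$. Taking $k=1$ reassembles the factor $\big((D_0+\mu_0)\rho/(\mu_0R)\big)^N=\tilde r_{CF}^{\,N}$ of \eqref{ConvergencerateCarlemanFourierWhole}, and since $\|\cdot\|_\infty\le\|\cdot\|_2$ this delivers \eqref{maintheoremanalytic.thm2.eq02}.

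The principal obstacle is the global-in-time control of $\|e^{i{\bf x}(t)}\|_2$: without it the forcing ${\bf r}_{k,N}$ could grow in $t$ and the tail series diverge, so the entire argument rests on showing that \eqref{maintheoremanalytic.thm2.eq1} defines a forward-invariant sublevel set for $\|e^{i{\bf x}}\|_2$. This is precisely where \eqref{assumption2} enters, through the differential inequality $\frac{d}{dt}\|e^{i{\bf x}}\|_2^2\le-2\|e^{i{\bf x}}\|_2^2\big(\mu_0-\frac{D_0\|e^{i{\bf x}}\|_2/R}{1-\|e^{i{\bf x}}\|_2/R}\big)$, whose right-hand side is $\le0$ exactly on the region \eqref{maintheoremanalytic.thm2.eq1}; establishing this invariance (and the resulting monotone decay) is the substance of Lemma \ref{maintheoremanalytic.thm2.lem1}. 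A secondary, purely bookkeeping difficulty is matching the stated prefactor $D_0R/\mu_0$ exactly, where I expect the monotone decay of $\|e^{i{\bf x}(t)}\|_2$ to sharpen the crude $1/(R-\rho)$ above into the claimed constant.
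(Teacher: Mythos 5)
Your proposal is correct and follows essentially the same route as the paper's proof: the same block-error decomposition and Duhamel representation, the same kernel decay $e^{-k\mu_0(t-s)}$ coming from \eqref{assumption2}, the same global-in-time control of $\|e^{i{\bf x}(t)}\|_2$ via Lemma \ref{maintheoremanalytic.thm2.lem1}, the same Schur and geometric-tail bounds, and the same downward induction in $k$ producing the growth factor $\big((D_0+\mu_0)/\mu_0\big)^{N-k}$. The only deviations are bookkeeping ($\ell^2$ block norms and a normalized recursion $a_k$ in place of the paper's cumulative $\ell^\infty$ induction), and the prefactor $\mu_0 R/(D_0+\mu_0)$ your computation yields implies the stated $D_0R/\mu_0$, since Assumption \ref{assump-1} with $k=0$ together with \eqref{assumption2} forces $\mu_0\le D_0$.
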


 For a  
  proof of  Theorem \ref{maintheoremanalytic.thm2}, we refer to Section \ref{maintheoremanalytic.thm2.pfsection}.

For the state vector ${\bf x}(t)=[x_1(t), \ldots, x_d(t)]^T$, let us consider the function
\begin{equation}\label{ut.def}
u(t)=\|\exp(i{\bf x}(t))\|_2 = \Big(\Sp \sum_{j=1}^d \Sp e^{-2\Im x_j(t)}\Big)^{1/2}.
\end{equation}
From Lemma \ref{maintheoremanalytic.thm2.lem1} in Section \ref{maintheoremanalytic.thm2.pfsection}, it follows that:
\begin{equation} \label{ut.eq1}
u(t) \le u(0) = \|\exp(i{\bf x}_0)\|_2 \quad \textrm{for all} \ \  t\ge 0, \ \ {\rm and} \ \lim_{t\to \infty} u(t)=0.
\end{equation}
Given the relations in \eqref{complexdynamic.def}, \eqref{assumption1}, \eqref{assumption0}, \eqref{maintheoremanalytic.thm2.eq1}, and \eqref{ut.eq1}, we deduce
\begin{equation*}
\big|x_j(t)-x_j(0)\big| = \Bigg|\int_{0}^t \sum_{{\pmb \alpha}\in {\mathbb Z}_+^d} g_{j, {\pmb \alpha}}(s) e^{ i {\pmb \alpha} {\bf x}(s)} ds\Bigg| \le (D_0+\mu_0) t, \ \ 1\le j\le d.
\end{equation*}
 Using the above estimate for the state vector ${\bf x}$ and incorporating \eqref{maintheoremanalytic.thm2.eq02}, we derive
\begin{equation}
\hspace{0cm} \sup_{1\le j\le d} \Sp \left|v_{j, N}(t)e^{-ix_j(t)} -1\right| \Sp \le \Sp \frac{\mu_0 R \|\exp(-i{\bf x}_0)\|_\infty  }{ D_0} \left(\frac{(D_0+\mu_0)\|\exp(i{\bf x}_0)\|_2}{\mu_0 R}\right)^{N} \Sp e^{(D_0+\mu_0)t} \label{maintheoremanalytic.cor2.eq1}
\end{equation}
for all $t\ge 0$. By mirroring the reasoning from the proof of Corollary \ref{maintheoremanalytic.cor1}, it can be shown that the terms $\xi_{j, N}(t), 1\le j\le d$, as mentioned in \eqref{maintheoremanalytic.cor1.eq0}, offer a precise approximation to the state variables $x_j(t), 1\le j\le d$, of the nonlinear dynamical system \eqref{complexdynamic.def} over any time interval $[0, T]$ when \( N \) is sufficiently large.

\begin{corollary} \label{maintheoremanalytic.cor2}
Let the initial condition of the nonlinear dynamical system \eqref{complexdynamic.def} be ${\bf x}_0=[x_{0, 1}, \ldots, x_{0, d}]^T$ as specified in Theorem \ref{maintheoremanalytic.thm2}. For a given $T>0$, if the order $N$ of the finite-section approximation \eqref{Carleman.eq7} meets the criterion
\begin{equation}\label{maintheoremanalytic.cor2.eq1}
\frac{\mu_0 R \|\exp(-i{\bf x}_0)\|_\infty  }{ D_0} \left(\frac{(D_0+\mu_0)\|\exp(i{\bf x}_0)\|_2}{\mu_0 R}\right)^{N} \Sp e^{(D_0+\mu_0)T}
 \Sp \le \Sp \frac{1}{2},
\end{equation}
where $D_0, R$ and $\mu_0$ are constants from \eqref{assumption1} and \eqref{assumption2}, then $v_{j, N}, 1\le j\le d$, can be expressed as in  \eqref{maintheoremanalytic.cor1.eq0} with $\xi_{j, N}(t),  0\le t\le T$,  satisfying
\begin{equation} \label{maintheoremanalytic.cor2.eq2}
\sup_{1\le j\le d}|\xi_{j,  N}(t)-x_j(t)| \Sp \le \Sp
\frac{4\mu_0 R \|\exp(-i{\bf x}_0)\|_\infty  }{ D_0} \left(\frac{(D_0+\mu_0)\|\exp(i{\bf x}_0)\|_2}{\mu_0 R}\right)^{N} \Sp e^{(D_0+\mu_0)t}.
\end{equation}
\end{corollary}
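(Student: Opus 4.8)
The plan is to deduce the corollary from Theorem \ref{maintheoremanalytic.thm2} via the pointwise estimate \eqref{maintheoremanalytic.cor2.eq1} derived immediately above the statement, following verbatim the scheme used for Corollary \ref{maintheoremanalytic.cor1}. First I would note that the right-hand side of that estimate is strictly increasing in $t$ (since $D_0,\mu_0>0$), so that evaluating at the endpoint $t=T$ and invoking hypothesis \eqref{maintheoremanalytic.cor2.eq1} gives
\[
\sup_{1\le j\le d}\big|v_{j, N}(t)\, e^{-ix_j(t)}-1\big|\le \tfrac12 \qquad \text{for all } 0\le t\le T.
\]
In particular $v_{j,N}(t)\ne 0$ on $[0,T]$ for every $1\le j\le d$.

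Since each component $v_{j,N}$ is a continuous solution of the linear system \eqref{Carleman.eq7} and is nonvanishing on $[0,T]$, it possesses a continuous logarithm there. I would use this to define $\xi_{j,N}(t)$ continuously through $v_{j,N}(t)=e^{i\xi_{j,N}(t)}$, fixing the branch by the initial value $v_{j,N}(0)=e^{ix_{0,j}}$ so that $\xi_{j,N}(0)=x_{0,j}$, exactly as in \eqref{maintheoremanalytic.cor1.eq0}. Writing $z_j(t):=\xi_{j,N}(t)-x_j(t)$, one has $e^{iz_j(t)}=v_{j,N}(t)e^{-ix_j(t)}$, so the first step yields $|e^{iz_j(t)}-1|\le\tfrac12$ on $[0,T]$. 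Applying the elementary fact recorded before Corollary \ref{maintheoremanalytic.cor1}, namely $|w \bmod 2\pi|\le 4\epsilon$ whenever $|e^{iw}-1|\le\epsilon\le\tfrac12$, with $w=z_j(t)$ and $\epsilon$ the right-hand side of \eqref{maintheoremanalytic.cor2.eq1}, produces the asserted bound modulo $2\pi$.

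It then remains to upgrade this congruence-class bound to a genuine bound on $z_j(t)$, and this is the step I expect to be the main obstacle. The point is that $z_j$ is continuous with $z_j(0)=0$, and the modulus of its principal representative never exceeds $4\cdot\tfrac12=2<\pi$ on $[0,T]$; by connectedness of the interval this forbids any jump by a nonzero multiple of $2\pi$, so $z_j(t)$ coincides with its principal representative throughout. The bookkeeping is slightly delicate because $\xi_{j,N}$ is complex, so both the angular part and the imaginary part (which records the growth of $|v_{j,N}|$ through the factor $e^{(D_0+\mu_0)t}$) must be tracked simultaneously; the quantitative input that makes the winding argument close is precisely the strict inequality $2<\pi$. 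Substituting the explicit right-hand side of \eqref{maintheoremanalytic.cor2.eq1} for $4\epsilon$ then gives \eqref{maintheoremanalytic.cor2.eq2}, completing the proof.
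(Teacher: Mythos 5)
Your proposal is correct and follows essentially the same route as the paper: the paper's proof of Corollary \ref{maintheoremanalytic.cor2} simply mirrors that of Corollary \ref{maintheoremanalytic.cor1} — nonvanishing of $v_{j,N}$ on $[0,T]$ from the monotone pointwise bound and hypothesis \eqref{maintheoremanalytic.cor2.eq1}, a continuous choice of logarithm pinned at $\xi_{j,N}(0)=x_{0,j}$, and the elementary estimate $|z \bmod 2\pi|\le 4\epsilon$ for $|e^{iz}-1|\le\epsilon\le 1/2$. Your explicit winding argument (the components of $\{z:\,|z \bmod 2\pi|\le 2\}$ are separated since $2<\pi$, so continuity and $z_j(0)=0$ keep $z_j(t)$ in the component at the origin) is precisely the step the paper leaves implicit, so this is a faithful, slightly more careful rendering of the same proof.
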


\section{Carleman-Fourier Linearization of Complex Dynamical Systems with Multiple Fundamental Frequencies}
\label{multiple.section}

In this section, we consider complex  dynamical system \eqref{complexdynamic.def}
with the periodic vector field ${\bf g}$ having multiple non-zero fundamental frequencies $\omega_l\in {\mathbb C}$ for $1\le l\le L$, which is represented by the expression
\begin{equation}\label{function.multiple}
{\bf   g}(t, {\bf   x}) =\sum_{{\pmb \alpha}_1,\cdots, {\pmb \alpha}_L\in {\mathbb Z}^{d}} {\bf g}_{{\pmb \alpha}_1, \ldots, {\pmb \alpha}_L}(t)~
 e^{i\sum_{l=1}^L \omega_l {\pmb \alpha}_l {\bf   x}}
\end{equation}
and its Fourier coefficients  ${\bf g}_{{\pmb \alpha}_1, \ldots, {\pmb \alpha}_L}(t)=
[g_{1, {\pmb \alpha}_1,\cdots, {\pmb \alpha}_L}(t), \ldots, g_{d,  {\pmb \alpha}_1,\cdots, {\pmb \alpha}_L}(t)]^T$
of the vector field ${\bf g}$  exhibit the uniform exponential decay property, i.e.,
\begin{equation} \label{function.multiple.eq1}
\sup_{t\ge 0}~
\sum_{j'=1}^d ~\sum_{ |{\pmb \alpha}_1|+\cdots+|{\pmb \alpha}_L|=k} |g_{j', {\pmb \alpha}_1,\cdots, {\pmb \alpha}_L}(t)|
\leq\frac{D_1}{ 2\left( \sum_{l=1}^L|\omega_l|\right) R^{k}}, \ \ k\ge 0,
\end{equation}
 where $D_1>0$ and $R>1$ are positive constants. One can verify that the multiple-frequency
Fourier expansion in \eqref{function.multiple} converges when the state vector ${\bf x}=[x_1, \ldots, x_d]^T\in {\mathbb C}^d$ satisfies
\begin{equation} \label{maintheoremanalytic.thm3.eq1-0}
 \max_{1\le l\le L} \Sp \max_{1\le j\le d} \Sp \big|\Im (\omega_l {x}_{j})\big| \Sp < \Sp \ln R, \end{equation}
 cf. the requirement \eqref{maintheoremanalytic.thm3.eq1} on the initial ${\bf x}_0$ for the convergence of finite-section appproximation of Carleman-Fourier linearization.
Clearly, 
 the periodic vector field ${\bf g}$ in \eqref{complexdynmaic.fdef} and \eqref{assumption1} satisfies the conditions \eqref{function.multiple} and \eqref{function.multiple.eq1} with the single fundamental frequency $\omega_1=1$, the same radius $R$, and a doubled constant $D_1=2D_0$.

In  Section \ref{CarlemanFourierMultifrequency.subsection},
 we introduce  Carleman-Fourier linearization \eqref{Carleman.multiple.def} for the nonlinear dynamical system \eqref{complexdynamic.def}
with the periodic vector field ${\bf g}: \R \times \C^d \rightarrow \C^d$ exhibiting multiple fundamental frequencies and having its Fourier coefficients with exponential decay, as specified by \eqref{function.multiple} and \eqref{function.multiple.eq1}.
 The lifting scheme, highlighted in \eqref{Carleman.multiple.def}, stems from noting that the extended state vector
\begin{equation} \label{function.multiple.eq2}
\tilde {\bf x}=[\omega_1 {\bf x}^T, \ldots, \omega_L {\bf x}^T,
-\omega_1 {\bf x}^T, \ldots, -\omega_L {\bf x}^T]^T\in {\mathbb C}^{2dL}
\end{equation}
obeys the nonlinear dynamical system \eqref{function.multiple.def1} with  Fourier coefficients of its governing periodic vector field $\tilde {\bf g}$
satisfying \eqref{assumption0} and Assumption \ref{assump-1}, as demonstrated by \eqref{function.multiple.eq4} and \eqref{function.multiple.eq5}.

In  Section \ref{multiplefrequence.shortrange.section},  we focus on the finite-section approximation \eqref{Carleman.multiple.eq8} of the  Carleman-Fourier linearization \eqref{Carleman.multiple.def}.
  We  establish that the primary block, $\tilde {\bf v}_{1, N}$ for $N\ge 1$, within the finite-section approximation \eqref{Carleman.multiple.eq8} can offer an exponential approximation to the state vector ${\bf x}$ of the complex dynamical system \eqref{complexdynamic.def} within a certain time span. This is further elucidated in \eqref{maintheoremanalytic.cor3.eq2}, Corollary \ref{maintheoremanalytic.cor3} and Theorem \ref{maintheoremanalytic.thm3}.

We observe
 the imaginary components of the periodic vector field $\tilde {\bf g}$ in \eqref{function.multiple.def1} don't generally fulfill the positivity requirement \eqref{assumption2}, as indicated in \eqref{positive.tildeg}.
 This inspires us to consider complex dynamical systems
 with the periodic vector field ${\bf g}$ having positive multiple frequencies only,
see \eqref{function.positivemultiple} and \eqref{function.positivemultiple.eq21}.
 In Section \ref{multifrequency.entirerange.section}, we
 delve into the Carleman-Fourier linearization \eqref{Carleman.positivemultiple.def} of such nonlinear dynamical system  and
   address the exponential convergence of its primary block in the finite-section approximation throughout the entire time range $[0, \infty)$, as detailed in Theorem \ref{maintheoremanalytic.positive.thm2}.

\subsection{Carleman-Fourier linearization for dynamical systems with multiple fundamental frequencies}
\label{CarlemanFourierMultifrequency.subsection}

For a given ${\pmb \alpha}=[\alpha_1, \ldots, \alpha_d]\in {\mathbb Z}^d$, let us denote ${\pmb \alpha}_+=[\max(\alpha_1, 0), \ldots, \max(\alpha_d, 0)]^T\in {\mathbb Z}_+^d$ and ${\pmb \alpha}_-={\pmb \alpha}_+-{\pmb \alpha}\in {\mathbb Z}_+^d$. We  define
\begin{equation}\label{tildeg.def0}
\tilde {\bf   g}_{\pmb \gamma}(t)=\big[\Sp \tilde  g_{1, {\pmb \gamma}}(t), \Sp \ldots \Sp, \tilde  g_{2dL,  {\pmb \gamma}}(t) \Sp \big]^T,  \ {\pmb \gamma}\in {\mathbb Z}^{2dL},
\end{equation}
 where $\tilde  g_{j,  {\pmb \gamma}}(t)=0$ except that $ \tilde  g_{j, {\pmb \gamma}}(t)= (-1)^m \Sp \omega_l \Sp g_{j', {\pmb \alpha}_1, \ldots, {\pmb \alpha}_L}(t)$ if $j= m Ld+ (l-1) d+j'$ for some  $m\in \{0, 1\}, 1\le l\le L, 1\le j'\le d$, and
 $ {\pmb \gamma}=\big[({\pmb \alpha}_{1})_+^T, \Sp \ldots \Sp, ({\pmb \alpha}_{L})_+^T,
({\pmb \alpha}_{1})_-^T, \Sp  \ldots \Sp, ({\pmb \alpha}_{L})_-^T \big]^T$
for some ${\pmb \alpha}_1, \ldots, {\pmb \alpha}_L\in {\mathbb Z}^d$. Using \eqref{function.multiple.eq1}, we observe that
\begin{equation} \label{function.multiple.eq4}
\tilde {\bf   g}_{\pmb \gamma}(t)={\bf   0} \quad {\rm for \ all} \quad {\pmb \gamma}\in {\mathbb Z}^{2dL}\backslash {\mathbb Z}^{2dL}_+,\  t\ge 0,
\end{equation}
 and
\begin{equation} \label{function.multiple.eq5}
 \sum_{ {\pmb \gamma}\in {\mathbb Z}_{+, k}^{2dL}} \|\tilde {\bf g}_{{\pmb \gamma}}(t)\|_1
  =   \sum_{m=0}^1  \sum_{l=1}^L \sum_{j'=1}^d \Sp \sum_{|{\pmb \alpha}_1|+\ldots+|{\pmb \alpha}_L|=k}
 \big|(-1)^m \Sp \omega_l \Sp g_{j', {\pmb \alpha}_1, \ldots, {\pmb \alpha}_L}(t)\big|
\leq   D_1 R^{-k}, \  t\ge 0,
\end{equation}
for integers $k\ge 0$,
 c.f. \eqref{assumption0} and \eqref{assumption1}. Moreover, the extended state vector $\tilde {\bf x}$  in \eqref{function.multiple.eq2} satisfies
 the following complex dynamical system,
\begin{equation} \label{function.multiple.def1}
{\Dot{\tilde {\bf   x}}}= { \tilde {\bf g}}(t, \tilde {\bf x})=\sum_{{\pmb \gamma}\in {\mathbb Z}^{2dL}_+} \Sp {\tilde {\bf   g}}_{\pmb \gamma}(t) \Sp e^{i {\pmb \gamma} \tilde  {\bf   x}},
\end{equation}
with initial condition  $\tilde {\bf x}(0)=\tilde {\bf x}_0:= [\omega_1 {\bf   x}_0^T,  \ldots, \omega_L {\bf   x}_0^T,
-\omega_1 {\bf   x}_0^T, \ldots, -\omega_L {\bf   x}_0^T]^T$.

Set $\tilde {\bf   w}_k=[\exp(i{\pmb \gamma}{\tilde {\bf x}})]_{{\pmb \gamma}\in {\mathbb Z}_{+, k}^{2dL}}$ with initial
 $\tilde {\bf   w}_k(0)=[\exp(i{\pmb \gamma}{\tilde {\bf x}_0})]_{{\pmb \gamma}\in {\mathbb Z}_{+, k}^{2dL}} , k\ge 1$, and
for indices $1\le k\le l$, define
\begin{equation} \label{function.multiple.eq5+}
\tilde {\bf   B}_{k,l}(t)=\Big[\Sp i\sum_{j=1}^{2dL} \Sp \gamma_j \Sp \tilde g_{j, {\pmb \delta}-{\pmb \gamma}}(t) \Sp \Big]_{{\pmb \gamma}\in {\mathbb Z}_{+,k}^{2dL}, \Sp {\pmb \delta}\in {\mathbb Z}_{+,l}^{2dL}}
\end{equation}
with ${\pmb \gamma}=\big[\Sp \gamma_1, \Sp \ldots \Sp, \gamma_{2dL} \Sp \big]\in {\mathbb Z}^{2dL}_{+,k}$.
 In accordance with the lifting scheme from Section \ref{sec:finite-section}, we define the {\bf Carleman-Fourier linearization} of the  complex  dynamical system \eqref{complexdynamic.def}
with the governing vector field ${\bf g}$  satisfying \eqref{function.multiple} and \eqref{function.multiple.eq1}
as follows:
\begin{equation}\label{Carleman.multiple.def}
{\Dot {\tilde {\bf   w}}(t)}=\tilde {\bf   B} (t) {\tilde {\bf   w}}(t),
\end{equation}
with initial condition ${\tilde {\bf   w}}(0)=[{\tilde {\bf   w}}_k(0)]_{k=1}^\infty$,
where $\tilde {\bf   w}=\big[\Sp  \tilde {\bf   w}_1^T, \tilde {\bf   w}_2^T, \Sp \dots \Sp, \tilde {\bf   w}_N^T, \Sp \dots \Sp \big]^{T}$ and
\begin{equation} \label{Carleman.multiple.eq7}
\tilde {\bf   B}(t)=
\begin{bmatrix}
    \tilde {\bf  B}_{1,1}(t) & \tilde {\bf   B}_{1,2}(t) & \dots  & \tilde {\bf   B}_{1, N}(t)  & \cdots\\
      &\tilde {\bf B}_{2,2}(t) & \cdots & \tilde{\bf   B}_{2, N}(t)  & \cdots\\
      & & \ddots & \vdots & \ddots\\
      & & & \tilde{\bf   B}_{N,N}(t) & \cdots \\
      & & & & \ddots
\end{bmatrix}.
\end{equation}

\subsection{Convergence of finite-section approximation in a time range}
\label{multiplefrequence.shortrange.section}

We define the corresponding  finite-section approximation of the infinite-dimensional dynamical system \eqref{Carleman.multiple.def} by
\begin{equation}
\label{Carleman.multiple.eq8}
\begin{bmatrix}
    {\Dot{\tilde {\bf   v}}}_{1,N}(t) \\ {\Dot{\tilde {\bf   v}}}_{2,N}(t) \\ \vdots \\ {\Dot{\tilde{\bf   v}}}_{N,N}(t)
\end{bmatrix}
    =\begin{bmatrix}
    \tilde {\bf   B}_{1,1}(t) &  \tilde {\bf   B}_{1,2}(t)  & \ldots & \tilde {\bf   B}_{1, N}(t)  \\
      &\tilde{\bf  B}_{2,2}(t) & \ldots & \tilde{\bf  B}_{2, N}(t) \\
      & & \ddots & \vdots \\
      & & & \tilde{\bf  B}_{N,N}(t)
      \end{bmatrix}
      \begin{bmatrix}
    \tilde{\bf  v}_{1,N}(t) \\ \tilde {\bf  v}_{2,N}(t) \\ \vdots \\ \tilde{\bf  v}_{N,N}(t)
\end{bmatrix}
\end{equation}
with the initial $\tilde {\bf  v}_{k,N}(0)=\tilde{\bf  w}_k(0)$ for $k=1, \dots, N$. Following a previously established argument in the proof of Theorem \ref{maintheoremanalytic.thm1} and applying \eqref{function.multiple.eq4} and \eqref{function.multiple.eq5}, the first block $\tilde {\bf v}_{1, N}$ in the approximation \eqref{Carleman.multiple.eq8} converges exponentially to the solution of the original nonlinear system \eqref{complexdynamic.def}.

\begin{theorem}\label{maintheoremanalytic.thm3}
Suppose that the periodic vector field ${\bf g}: \R \times \C^d \rightarrow \C^d$ in the nonlinear dynamical system  \eqref{complexdynamic.def} meets the conditions of \eqref{function.multiple} and \eqref{function.multiple.eq1}.
Let $\tilde {\bf  v}_{1, N}(t)=[\tilde{v}_{1, N}(t), \ldots, \tilde{v}_{2dL, N}(t)]^T, N\ge 1$,  be  the first block  in
the finite-section approximation \eqref{Carleman.multiple.eq8}.
 If the initial state ${\bf x}_0 = [x_{0, 1}, \ldots, x_{0, d}]^T \in {\mathbb C}^d$ satisfies
\begin{equation} \label{maintheoremanalytic.thm3.eq1}
 \max_{1\le l\le L} \Sp \ln \max ( \|\exp(\omega_l{\bf x}_0)\|_\infty,  \|\exp(-\omega_l{\bf x}_0)\|_\infty\big)=
 \max_{1\le l\le L} \Sp \max_{1\le j\le d} \Sp \big|\Im (\omega_l {x}_{0, j})\big| \Sp < \Sp \ln R - 1,
\end{equation}
then
\begin{equation} \label{maintheoremanalytic.thm3.eq2}
\left| \tilde{v}_{j'+d(l-1)+mdL,N}(t) \Sp e^{-i(-1)^m\omega_lx_{j'}(t)}-1\right| \Sp \leq \Sp C_1 \Sp  N^{-3/2} \Sp e^{D_1(t-\tilde T_{CF})N}
\end{equation}
for all $m\in \{0,1\}$, $1\le l\le L$, $1\le j'\le d$, and $0 \le t \le \tilde{T}_{CF}$. Here, the constants $D_1$ and $R$ are defined in \eqref{function.multiple.eq1},
\begin{equation} \label{maintheoremanalytic.thm3.eq4}
    \tilde T_{CF}  = \frac{e-1}{D_1(2e-1)}\Sp \left( \Sp \ln R-1 - \max_{1\le l\le L} \Sp \max_{1\le j\le d} \Sp \big|\Im (\omega_l {x}_{0,j})\big| \Sp \right),
\end{equation}
and
\begin{eqnarray}
\quad C_1  & \hskip-0.08in =& \hskip-0.08in  \frac{1}{\sqrt{2\pi} (e-1)} \exp\left( \Sp \frac{3e-1}{2e-1} \ln R + \frac{e-1}{2e-1} \max_{1\le l\le L} \Sp \max_{1\le j\le d} \Sp \big|\Im (\omega_l x_{0,j})\big|
-\frac{e}{2e-1} \Sp \right)\nonumber\\
\quad & \hskip-0.08in \le & \hskip-0.08in  \frac{R^2}{2\pi e(e-1)}.
\end{eqnarray}
\end{theorem}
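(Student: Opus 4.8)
The plan is to reduce the multi-frequency statement to the single-frequency Theorem \ref{maintheoremanalytic.thm1} through the extended state vector $\tilde{\bf x}$ of \eqref{function.multiple.eq2}. The starting point is the observation recorded in \eqref{function.multiple.eq4}, \eqref{function.multiple.eq5}, and \eqref{function.multiple.def1}: the extended vector $\tilde{\bf x}$ obeys the \emph{single-frequency} complex dynamical system $\dot{\tilde{\bf x}}=\tilde{\bf g}(t,\tilde{\bf x})$ whose Fourier coefficients $\tilde{\bf g}_{\pmb\gamma}$ vanish for ${\pmb\gamma}\in{\mathbb Z}^{2dL}\backslash{\mathbb Z}^{2dL}_+$ and satisfy $\sum_{{\pmb\gamma}\in{\mathbb Z}_{+,k}^{2dL}}\|\tilde{\bf g}_{\pmb\gamma}(t)\|_1\le D_1 R^{-k}$. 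In other words, $\tilde{\bf g}$ meets the analyticity condition \eqref{assumption0} and Assumption \ref{assump-1} with the dimension $d$ replaced by $2dL$ and the constant $D_0$ replaced by $D_1$. Moreover, by construction the Carleman-Fourier linearization \eqref{Carleman.multiple.def} and its finite-section approximation \eqref{Carleman.multiple.eq8} are precisely the lifting scheme of Section \ref{subsec:Carleman-Fourier} applied to $\dot{\tilde{\bf x}}=\tilde{\bf g}(t,\tilde{\bf x})$. Thus Theorem \ref{maintheoremanalytic.thm1} applies to the extended system, and the remaining work is to translate its hypothesis, its time range, and its error bound back to the original data ${\bf x}_0,\omega_1,\dots,\omega_L$.

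First I would translate the smallness hypothesis. Since the entries of $\tilde{\bf x}_0$ are exactly the numbers $(-1)^m\omega_l x_{0,j'}$ for $m\in\{0,1\}$, $1\le l\le L$, $1\le j'\le d$, their imaginary parts realize both signs of each $\Im(\omega_l x_{0,j'})$, so that $\max_j\Im\tilde x_{0,j}=-\min_j\Im\tilde x_{0,j}=\max_{1\le l\le L}\max_{1\le j'\le d}|\Im(\omega_l x_{0,j'})|$. Consequently $\|\exp(i\tilde{\bf x}_0)\|_\infty=\exp\big(\max_{l,j'}|\Im(\omega_l x_{0,j'})|\big)$, and the single-frequency requirement \eqref{maintheoremanalytic.thm1.eq1}, namely $\|\exp(i\tilde{\bf x}_0)\|_\infty<e^{-1}R$, becomes \emph{identical} to the hypothesis \eqref{maintheoremanalytic.thm3.eq1}. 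Substituting $D_0\mapsto D_1$ and $\|\exp(i{\bf x}_0)\|_\infty\mapsto\|\exp(i\tilde{\bf x}_0)\|_\infty$ into \eqref{maintheoremanalytic.thm1.eq4} then reproduces the time range $\tilde T_{CF}$ of \eqref{maintheoremanalytic.thm3.eq4}.

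Next I would invoke Theorem \ref{maintheoremanalytic.thm1} for the $2dL$-dimensional extended system and read off the conclusion componentwise. The $j$-th component of $\tilde{\bf x}(t)$ with $j=j'+d(l-1)+mdL$ equals $(-1)^m\omega_l x_{j'}(t)$, so the bound \eqref{maintheoremanalytic.thm1.eq2} produces exactly the left-hand side of \eqref{maintheoremanalytic.thm3.eq2}. To obtain the clean right-hand side I would absorb the geometric factor into the exponential: using the explicit value of $\tilde T_{CF}$ one checks that $\big(e\|\exp(i\tilde{\bf x}_0)\|_\infty/R\big)^{(e-1)N/(2e-1)}=e^{-D_1\tilde T_{CF}N}$, which turns $e^{D_1tN}\big(e\|\exp(i\tilde{\bf x}_0)\|_\infty/R\big)^{(e-1)N/(2e-1)}$ into $e^{D_1(t-\tilde T_{CF})N}$. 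The constant $C_1$ arises from $C_0$ under the same substitution, with the $\Im$-term again evaluated through $\max_j\Im\tilde x_{0,j}=\max_{l,j'}|\Im(\omega_l x_{0,j'})|$; the closing inequality $C_1\le R^2/(2\pi e(e-1))$ then follows by inserting the hypothesis $\max_{l,j'}|\Im(\omega_l x_{0,j'})|<\ln R-1$ into the exponent of $C_1$ and simplifying.

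The bookkeeping above is routine; the substantive content lies entirely in the reduction step, i.e.\ in establishing \eqref{function.multiple.def1}, \eqref{function.multiple.eq4}, and \eqref{function.multiple.eq5}. The crux there is rewriting $e^{i\sum_{l}\omega_l{\pmb\alpha}_l{\bf x}}$ as $e^{i{\pmb\gamma}\tilde{\bf x}}$ via the decomposition ${\pmb\alpha}_l=({\pmb\alpha}_l)_+-({\pmb\alpha}_l)_-$ into nonnegative parts, which simultaneously forces ${\pmb\gamma}\in{\mathbb Z}_+^{2dL}$ (hence \eqref{assumption0}) and redistributes the order-$k$ sum $\sum_{|{\pmb\alpha}_1|+\cdots+|{\pmb\alpha}_L|=k}$ over ${\mathbb Z}_{+,k}^{2dL}$ with the $|\omega_l|$-weights that convert \eqref{function.multiple.eq1} into \eqref{function.multiple.eq5}. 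I expect the main obstacle to be carrying this index correspondence — together with the normalizing factor $\frac{D_1}{2(\sum_l|\omega_l|)}$ — cleanly enough that the hypotheses of Theorem \ref{maintheoremanalytic.thm1} hold with the precise constants $D_1$ and $R$; once that verification is in place, the exponential error estimate transfers automatically.
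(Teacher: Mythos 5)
Your reduction is exactly the route the paper takes: the paper disposes of Theorem \ref{maintheoremanalytic.thm3} by noting that the extended system \eqref{function.multiple.def1} satisfies \eqref{assumption0} and Assumption \ref{assump-1} with $d\mapsto 2dL$ and $D_0\mapsto D_1$ (via \eqref{function.multiple.eq4} and \eqref{function.multiple.eq5}), and then re-running the argument of Theorem \ref{maintheoremanalytic.thm1}. Your translation of the hypothesis \eqref{maintheoremanalytic.thm1.eq1} into \eqref{maintheoremanalytic.thm3.eq1}, of the time range into \eqref{maintheoremanalytic.thm3.eq4}, and of the geometric factor into $e^{D_1(t-\tilde T_{CF})N}$ are all correct; in particular $\big(e\|\exp(i\tilde{\bf x}_0)\|_\infty/R\big)^{(e-1)/(2e-1)}=e^{-D_1\tilde T_{CF}}$ is verified exactly as you say.

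The one step that fails is the constant. Substituting $\max_j \Im\tilde x_{0,j}=M:=\max_{l,j'}|\Im(\omega_l x_{0,j'})|$ into the packaged constant $C_0$ of Theorem \ref{maintheoremanalytic.thm1} produces the exponent $\frac{3e-1}{2e-1}M+\frac{3e-1}{2e-1}\ln R-\frac{e}{2e-1}$, i.e.\ coefficient $\frac{3e-1}{2e-1}$ on $M$, whereas $C_1$ carries the smaller coefficient $\frac{e-1}{2e-1}$; with your larger constant, inserting $M<\ln R-1$ yields growth of order $R^{2(3e-1)/(2e-1)}\approx R^{3.2}$, so the closing bound $C_1\le R^2/\big(2\pi e(e-1)\big)$ does not ``follow by simplifying.'' The reason is that $C_0$ as displayed is already a loosened form of what the proof of Theorem \ref{maintheoremanalytic.thm1} actually yields, namely
\begin{equation*}
\frac{R}{\sqrt{2\pi}(e-1)}\exp\Big(\frac{eD_0T_{CF}^*}{e-1}+\max_{1\le j\le d}\Im x_{0,j}\Big),
\end{equation*}
where $T_{CF}^*$ carries $\ln\|\exp(i{\bf x}_0)\|_\infty=-\min_{j}\Im x_{0,j}$; the theorem statement replaces that $\min$ by the $\max$, which is harmless in general but very lossy precisely for the extended vector, whose imaginary parts are symmetric about zero: $\min_j\Im\tilde x_{0,j}=-M$ while $\max_j\Im\tilde x_{0,j}=+M$. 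Re-running the proof (rather than quoting the statement) on the extended system gives the exponent
\begin{equation*}
\ln R+\frac{e}{2e-1}\big(\ln R-M-1\big)+M=\frac{3e-1}{2e-1}\ln R+\frac{e-1}{2e-1}M-\frac{e}{2e-1},
\end{equation*}
which is exactly $C_1$ and then gives the $R^2$-type bound. So the fix is small but essential: use the proof-internal constant together with the sign symmetry of $\Im\tilde{\bf x}_0$ — the same symmetry you already exploited for the hypothesis — instead of substituting into the boxed $C_0$.
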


\vspace{0.3cm}

As a consequence, we have the following approximation theorem for the complex dynamical system \eqref{dynamicsystem}.

 \begin{corollary} \label{maintheoremanalytic.cor3}
 Consider the complex dynamical system \eqref{dynamicsystem}
 with  the periodic vector field ${\bf g}: \R \times \C^d \rightarrow \C^d$
 satisfies Assumption \ref{assump-1} for some $D_0>0$ and $R>e$.
Let $\tilde {\bf  v}_{1, N}(t)=[\tilde{v}_{1,  N}(t), \ldots, \tilde{v}_{2d,  N}(t)]^T, N\ge 1$, be  the first block  in
the finite-section approximation \eqref{Carleman.multiple.eq8}.
 \begin{itemize}

 \item[{(i)}] If the initial state ${\bf x}_0\in {\mathbb C}^d$ satisfies \eqref{carlemanfourier.generalrequirement},
then
\begin{equation*} 
\max\Big(\left| \tilde{v}_{j',N}(t) \Sp e^{-ix_{j'}( t)}-1\right|, \left| \tilde{v}_{j'+d,N}(t) \Sp e^{ix_{j'}( t)}-1\right|\Big) \Sp \leq \Sp  \frac{R^2}{2\pi e(e-1)} \Sp  N^{-3/2} \Sp e^{2D_0(t-\tilde T_{CF}^{*})N}
\end{equation*}
for all  $1\le j'\le d$, and $0 \le t \le \tilde{T}_{CF}^*$. Here
$\tilde T^{*}_{CF}$ is given in \eqref{carlemanfourier.generaltimerequirement}.

\item[{(ii)}] If the initial state ${\bf x}_0\in {\mathbb R}^d$ is real-valued, then
\begin{equation*} \label{maintheoremanalytic.cor3.eq2}
  \max\Big(\left| \tilde{v}_{j',N}(t) \Sp e^{-ix_{j'}( t)}-1\right|, \left| \tilde{v}_{j'+d,N}(t) \Sp e^{ix_{j'}( t)}-1\right|\Big) \Sp
\le \Sp \frac{R^2 e^{2D_0Nt-\frac{(e-1)(\ln R-1) }{(2e-1)}N}}{2\pi e(e-1) N^{3/2}}  
\end{equation*}
for all  $1\le j'\le d$, and $0 \le t \le \frac{(e-1)(\ln R-1) }{2D_0(2e-1)}
$.
\end{itemize}
 \end{corollary}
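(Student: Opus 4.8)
The plan is to obtain Corollary~\ref{maintheoremanalytic.cor3} as a direct specialization of Theorem~\ref{maintheoremanalytic.thm3} to the single-frequency case $L=1$, $\omega_1=1$. As already noted following \eqref{maintheoremanalytic.thm3.eq1-0}, a periodic vector field ${\bf g}$ satisfying Assumption~\ref{assump-1} with $D_0>0$ and $R>1$ fits the multi-frequency template \eqref{function.multiple}--\eqref{function.multiple.eq1} with $L=1$, $\omega_1=1$, the same radius $R$, and doubled constant $D_1=2D_0$. Thus Theorem~\ref{maintheoremanalytic.thm3} applies directly, and the entire task reduces to rewriting its hypothesis, time horizon, convergence rate, and index labels in the single-frequency notation of the corollary. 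No new estimate is needed.

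For part~(i), I would first translate the initial-state requirement. Since
\[
\|\exp(i{\bf x}_0)\|_\infty = \max_{1\le j\le d} e^{-\Im x_{0,j}}, \qquad \|\exp(-i{\bf x}_0)\|_\infty = \max_{1\le j\le d} e^{\Im x_{0,j}},
\]
we have $\ln\max\{\|\exp(i{\bf x}_0)\|_\infty,\|\exp(-i{\bf x}_0)\|_\infty\} = \max_{1\le j\le d}|\Im x_{0,j}|$. Hence \eqref{maintheoremanalytic.thm3.eq1} with $L=1$, $\omega_1=1$ is exactly $\max_{1\le j\le d}|\Im x_{0,j}| < \ln R - 1$, which is the second inequality of \eqref{carlemanfourier.generalrequirement}, the first inequality $e<R$ being the standing hypothesis $R>e$. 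The same identity shows that $\tilde T_{CF}$ from \eqref{maintheoremanalytic.thm3.eq4} equals $\tilde T^*_{CF}$ in \eqref{carlemanfourier.generaltimerequirement}. Finally, with $L=1$ the index $j'+d(l-1)+mdL$ in \eqref{maintheoremanalytic.thm3.eq2} reduces to $j'$ for $m=0$ and to $j'+d$ for $m=1$, so the two instances of \eqref{maintheoremanalytic.thm3.eq2} become the two terms inside the maximum of part~(i); substituting $D_1=2D_0$ and the uniform bound $C_1\le R^2/(2\pi e(e-1))$ produces the stated right-hand side.

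Part~(ii) is the further specialization to real ${\bf x}_0\in{\mathbb R}^d$, where $\Im x_{0,j}=0$ for all $j$, so $\max_{1\le j\le d}|\Im x_{0,j}|=0$. Then \eqref{maintheoremanalytic.thm3.eq1} reduces to $0<\ln R-1$, which holds since $R>e$; the horizon \eqref{maintheoremanalytic.thm3.eq4} collapses to $\tilde T_{CF}=(e-1)(\ln R-1)/(2D_0(2e-1))$, matching the interval in part~(ii); and the exponent becomes $D_1(t-\tilde T_{CF})N = 2D_0Nt - (e-1)(\ln R-1)N/(2e-1)$, which together with $C_1\le R^2/(2\pi e(e-1))$ yields the claimed bound.

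Since the argument is essentially a substitution, the main obstacle is purely bookkeeping: tracking the label convention $j'+d(l-1)+mdL$ through the $L=1$ reduction, confirming the identity relating $\max_{1\le j\le d}|\Im x_{0,j}|$ to $\ln\|\exp(\pm i{\bf x}_0)\|_\infty$, and checking that the factor $2$ from $D_1=2D_0$ appears correctly in both the exponent and the time horizon $\tilde T^*_{CF}$.
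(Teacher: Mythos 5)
Your proposal is correct and matches the paper's own route exactly: the paper obtains Corollary~\ref{maintheoremanalytic.cor3} precisely by specializing Theorem~\ref{maintheoremanalytic.thm3} to $L=1$, $\omega_1=1$, $D_1=2D_0$ (as noted after \eqref{maintheoremanalytic.thm3.eq1-0}), using the identity $\ln\max\{\|\exp(i{\bf x}_0)\|_\infty,\|\exp(-i{\bf x}_0)\|_\infty\}=\max_{1\le j\le d}|\Im x_{0,j}|$ and the stated bound $C_1\le R^2/(2\pi e(e-1))$. Your bookkeeping of the index reduction $j'+d(l-1)+mdL\mapsto j'$ (for $m=0$) and $j'+d$ (for $m=1$), the time horizon, and the exponent is accurate, so nothing is missing.
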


A similar conclusion to the one in Corollary \ref{maintheoremanalytic.cor3} (ii) has been established in \cite{moteesun2024} under the additional assumption that the governing vector field ${\bf g}$ is real-valued. In particular,
\begin{equation*} \label{maintheoremanalytic.cor3.eq2}
\max\Big(\left| \tilde{v}_{j',N}(t) \Sp e^{-ix_{j'}( t)}-1\right|, \left| \tilde{v}_{j'+d,N}(t) \Sp e^{ix_{j'}( t)}-1\right|\Big) \le \frac{1}{R (\sqrt{R}+1)}   \Sp \Big(\frac{ (1+\sqrt{2D_0t})^2}{R}\Big)^N
\end{equation*}
for all  $1\le j'\le d$ and $0 \le t \le  \frac{(\sqrt{R}-1)^2 }{2D_0}
$. This indicates that
Carleman-Fourier linearization for real dynamical systems with periodic vector fields  may deliver more accurate linearization
over more extensive  time range than for complex dynamical systems,
as
$$\frac{(\sqrt{R}-1)^2 }{2D_0}\ge \frac{(e-1)(\ln R-1) }{2D_0(2e-1)},$$
and
$$\frac{ (1+\sqrt{2D_0t})^2}{R}\le e^{2D_0t-\frac{(e-1)(\ln R-1) }{(2e-1)}}\ {\rm for \ all} \
 0\le t\le \frac{(e-1)(\ln R-1) }{2D_0(2e-1)}.$$
 The last inequality holds as for all $R>e$ and $0\le t\le \frac{(e-1)(\ln R-1)}{2D_0(2e-1)}$, we have
  \begin{eqnarray*}
 & &
 \ln \frac{ (1+\sqrt{2D_0t})^2}{R}- {2D_0t}+\frac{(e-1)(\ln R-1) }{(2e-1)}\nonumber\\
& \le  & \sup_{0\le u\le \sqrt{v}} 2\ln (1+u)-u^2-\frac{e}{e-1} v-1=\sup_{u\ge 0}
\  2\ln (1+u)-1- \frac{2e-1}{e-1} u^2 \nonumber\\
 & = & 2\ln (1+ u_0)-1-\frac{2e-1}{e-1} u_0^2 \approx -0.7076,
 \end{eqnarray*}
 where $u_0=\frac{-1+\sqrt{(6e-5)/(2e-1)}}{2}\approx 0.2983$.

Let us denote $\tilde {\bf x}_0=[\tilde x_{0, 1}, \ldots, \tilde x_{0, 2Ld}]^T$. If we select $\tilde T^{**}<\tilde T_{CF}$ and the order $N$ of the finite-section approximation \eqref{Carleman.multiple.eq8} meets the condition
\begin{equation}\label{maintheoremanalytic.cor3.eq1}
C_1 N^{-3/2} e^{ D_1(\tilde T^{**}-\tilde T_{CF})N} \le 1/2,
\end{equation}
then we can express
\begin{equation} \label{maintheoremanalytic.cor3.eq0}
\tilde v_{j, N}(t)=e^{i\tilde \xi_{j, N}(t)} \quad {\rm with} \quad \tilde \xi_{j, N}(0)= \tilde x_{0, j},
\end{equation}
and define
\[
\tilde\xi_{j', N}^*(t):=\frac{1}{2L}\Sp \sum_{m=0}^1 \Sp \sum_{l=1}^L  ~ (-1)^m \Sp \omega_l^{-1} \Sp \tilde \xi_{j'+d(l-1)+mdL,N}(t)
\]
for all $1\le j'\le d$. Applying Theorem \ref{maintheoremanalytic.thm3} and following the methodology used in the proof of Corollary \ref{maintheoremanalytic.cor1}, we deduce
\begin{equation} \label{maintheoremanalytic.cor3.eq2}
\sup_{1\le j'\le d} \Sp \Big|\tilde\xi_{j', N}^*(t)-x_{j'}(t)\Big| \Sp \le \Sp \frac{ 2}{\pi e(e-1) \min_{1\leq l\leq L}|\omega_l|} N^{-3/2} \Sp e^{ D_1(\tilde T^{**}-\tilde T_{CF})N}, \ \  0\le t\le \tilde T^{**}.
\end{equation}

\subsection{Exponential convergence of finite-section approximation over the entire range}
\label{multifrequency.entirerange.section}

For the nonlinear dynamical system with vector field $\tilde {\bf g}$  in \eqref{function.multiple.def1}, we observe
\begin{equation}\label{positive.tildeg}
\min_{1\le j\le 2dL} \Sp \Im \tilde g_{j, {\bf 0}}(t) \Sp = \Sp -\max _{0\leq m\leq 1}\max_{1\le l\le L} \Sp \max_{1\le j'\le d} \Sp \Im ((-1)^m \omega_l g_{j', {\bf 0}, \ldots, {\bf 0}}(t)) \Sp \le \Sp 0
\end{equation}
for all $t\geq 0$, indicating that the positive imaginary requirement \eqref{assumption2} is not met for the periodic vector field $\tilde {\bf g}$. This motivates us to inspect the nonlinear dynamical system \eqref{complexdynamic.def} with its periodic vector field  with nonnegative frequencies,
\begin{equation}\label{function.positivemultiple}
{\bf   g}(t,{\bf   x}) =\sum_{{\pmb \alpha}_1,\ldots, {\pmb \alpha}_L\in {\mathbb Z}_+^{d}} [g_{1, {\pmb \alpha}_1,\ldots, {\pmb \alpha}_L}(t), \ldots, g_{d, {\pmb \alpha}_1,\ldots, {\pmb \alpha}_L}(t)]^T e^{i\sum_{l=1}^L \omega_l {\pmb \alpha}_l {\bf   x}},
\end{equation}
which is analytic on the upper half plane  and has its Fourier coefficients showcasing uniform exponential decay, i.e.,
\begin{equation} \label{function.positivemultiple.eq21}
\sup_{t\ge 0} ~ \sum_{j'=1}^d ~\sum_{ |{\pmb \alpha}_1|+\ldots+|{\pmb \alpha}_L|=k} |g_{j', {\pmb \alpha}_1,\ldots, {\pmb \alpha}_L}(t)| \Sp \leq \Sp \frac{D_2}{ \left( \sum_{l=1}^L|\omega_l|\right)}\Sp R^{-k}, \ \ k\ge 0,
\end{equation}
 where $D_2$ and $R$ are positive constants. For $1 \le j \le dL$ and ${\pmb \beta} \in {\mathbb Z}^{dL}_+$, we define $\widehat g_{j, {\pmb \beta}}(t) = \omega_l g_{j', {\pmb \alpha}_1, \ldots, {\pmb \alpha}_L}(t)$ under the conditions $j = (l-1) d + j'$ for certain $1 \le l \le L$ and $1 \le j' \le d$, and ${\pmb \beta} = [{\pmb \alpha}_1^T, \ldots, {\pmb \alpha}_L^T]^T$ for some ${\pmb \alpha}_1, \ldots, {\pmb \alpha}_L \in {\mathbb Z}^d_+$. Consequently, we set $\widehat {\bf g}_{\pmb \beta}(t) = [\widehat g_{1, {\pmb \beta}}(t), \ldots, \widehat g_{dL, {\pmb \beta}}(t)]^T$. Then, we can verify that the new extended state vector $\widehat{\mathbf{x}} = [\omega_1\mathbf{x}^T, \ldots, \omega_L\mathbf{x}^T]^T\in \mathbb{C}^{dL}$ satisfies the nonlinear dynamical system described by
\begin{equation} \label{function.multiple.def221}
    \Dot{\widehat{\mathbf{x}}} = \widehat{\mathbf{g}}(t, \widehat{\mathbf{x}}) = \sum_{{\pmb \beta}\in \mathbb{Z}_+^{dL}} \Sp \widehat{\mathbf{g}}_{\pmb{\beta}}(t) \Sp e^{i\pmb{\beta} \widehat{\mathbf{x}}}
\end{equation}
The Fourier coefficients $\widehat{\mathbf{g}}_{\pmb{\beta}}(t)$, where ${\pmb \beta}\in \mathbb{Z}_+^{dL}$, satisfy the following inequality
\begin{equation} \label{function.positivemultiple.eq5}
    \sup_{t\ge 0} \Sp \sum_{j=1}^{dL} \Sp \sum_{ {\pmb \beta}\in \mathbb{Z}_{+, k}^{dL}} \Sp \big|\widehat{g}_{j,\pmb{\beta}}(t)\big|~ \le ~D_2 \Sp R^{-k}
\end{equation}
for $ k\ge 0$, which is in accordance with  \eqref{assumption1} and \eqref{function.multiple.eq5}. Consequently, we define the {\bf Carleman-Fourier linearization} of the nonlinear dynamical system \eqref{complexdynamic.def} with the periodic vector field ${\mathbf{g}}(t,{\mathbf{x}})$ from equations \eqref{function.positivemultiple} and \eqref{function.positivemultiple.eq21} as follows:
\begin{equation}\label{Carleman.positivemultiple.def}
    \Dot{\widehat{\mathbf{w}}}(t)=\widehat{\mathbf{B}} (t) \widehat{\mathbf{w}}(t),
\end{equation}
where $\widehat {\bf   w}=[\widehat {\bf   w}_1^T, \widehat {\bf   w}_2^T, \ldots, \widehat {\bf   w}_N^T, \dots]^{T}$ with $\widehat{\mathbf{w}}_k(t)=\big[\Sp e^{i\pmb{\beta} \widehat{\mathbf{x}}} \Sp \big]_{{\pmb \beta}\in \mathbb{Z}_{+, k}^{dL}}$ for $k\ge 1$. The matrix $\widehat{\mathbf{B}}(t)$ is given by:
\begin{equation} \label{Carleman.positivemultiple.eq7}
\widehat {\bf   B}(t)=
\begin{bmatrix}
    \widehat {\bf   B}_{1,1}(t) & \widehat {\bf   B}_{1,2}(t) & \dots & \widehat {\bf   B}_{1, N}(t)  & \cdots\\
      &\widehat {\bf B}_{2,2}(t) & \cdots & \widehat{\bf   B}_{2, N}(t)  & \cdots\\
      & & \ddots & \vdots & \ddots\\
      & & & \widehat {\bf   B}_{N,N}(t) & \cdots \\
      & & & & \ddots
\end{bmatrix}
\end{equation}
in which
$$\widehat{\mathbf{B}}_{k,l}(t)=\Big[\Sp i\sum_{j=1}^{dL} \Sp \beta_j \Sp \widehat{g}_{j, {\pmb \beta}'-{\pmb \beta}}(t) \Sp \Big]_{{\pmb \beta}\in \mathbb{Z}_{+,k}^{dL}, \Sp {\pmb \beta}'\in \mathbb{Z}_{+,l}^{dL}}$$
for $1\le k\le l$. Similarly, the {\it finite-section approximation} of the Carleman-Fourier linearization \eqref{Carleman.positivemultiple.def} is given by
\begin{equation}
\label{Carleman.positivemultiple.eq8}
\begin{bmatrix}
    \Dot{\widehat{\mathbf{v}}}_{1,N}(t) \\ \Dot{\widehat{\mathbf{v}}}_{2,N}(t) \\ \vdots \\ \Dot{\widehat{\mathbf{v}}}_{N,N}(t)
\end{bmatrix}
=
\begin{bmatrix}
    \widehat{\mathbf{B}}_{1,1}(t) & \widehat{\mathbf{B}}_{1,2}(t) & \dots & \widehat{\mathbf{B}}_{1, N}(t)  \\
    & \widehat{\mathbf{B}}_{2,2}(t) & \dots & \widehat{\mathbf{B}}_{2, N}(t) \\
    & & \ddots & \vdots \\
    & & & \widehat{\mathbf{B}}_{N,N}(t)
\end{bmatrix}
\begin{bmatrix}
    \widehat{\mathbf{v}}_{1,N}(t) \\ \widehat{\mathbf{v}}_{2,N}(t) \\ \vdots \\ \widehat{\mathbf{v}}_{N,N}(t)
\end{bmatrix}
\end{equation}
with initial conditions $\widehat{\mathbf{v}}_{k,N}(0)=\widehat{\mathbf{w}}_k(0)$ for $k=1, \ldots, N$.

Following the argument in Theorem \ref{maintheoremanalytic.thm2}, we demonstrate that the first block $\widehat{\mathbf{v}}_{1,N}(t)$ in the finite-section approximation \eqref{Carleman.positivemultiple.eq8} provides a good approximation of quantity $\widehat{\mathbf{w}}_1(t)$ associated with the state vector ${\mathbf{x}}(t)$ of the nonlinear dynamical system \eqref{complexdynamic.def} over the entire time interval $[0, \infty)$. This is contingent upon the constant term $\widehat{g}_{\mathbf{0}}(t)$ of the Fourier coefficients of the vector field $\widehat{g}(t,{\mathbf{x}})$ satisfying
\begin{equation}
\label{maintheoremanalytic.positive.thm2.eq1}
\min_{1\le l\le L} \Sp \min_{1\le j\le d} \Sp \Im (\omega_l g_{j, \mathbf{0}}(t))) \Sp \geq \Sp \widehat{\mu}_0
\end{equation}
for all $t\ge 0$ and some $\widehat{\mu}_0>0$, and the initial state ${\mathbf{x}}(0)=[x_{0, 1}, \ldots, x_{0, d}]^T$ of the nonlinear dynamical system \eqref{complexdynamic.def} satisfying
\begin{equation}\label{maintheoremanalytic.positivethm2.eq2}
    \widehat{\tau}:=\Big(\Sp \sum_{l=1}^L \Sp \sum_{j=1}^d \Sp e^{-2\Im (\omega_l x_{0, j})} \Sp \Big)^{1/2} \Sp < \Sp \frac{\widehat{\mu}_0 R} {D_2+\widehat{\mu}_0}.
\end{equation}

\begin{theorem}\label{maintheoremanalytic.positive.thm2}
Suppose that  the dynamical system \eqref{complexdynamic.def}
with the periodic analytic vector field function ${\bf g}:\R \times \C^d \rightarrow \C^d$ satisfies  \eqref{function.positivemultiple},
 \eqref{function.positivemultiple.eq21}, and \eqref{maintheoremanalytic.positive.thm2.eq1}. If its initial condition ${\bf x}_0$ satisfies  \eqref{maintheoremanalytic.positivethm2.eq2}, then
     \begin{equation} \label{maintheoremanalytic.thm2.eq2}
    \sup_{1\le l\le L} ~ \sup_{1\le j'\le d} ~  \big|\Sp \widehat v_{j'+(l-1)d,N}(t)-e^{i\omega_l x_{j'}(t)} \Sp \big|
    \Sp \leq \Sp
       \frac{\widehat \mu_0 R }{D_2}  \left(\frac{(D_2+\widehat \mu_0)\widehat \tau}{\widehat \mu_0 R}\right)^{N}
    \end{equation}
for all $t\ge 0$.
\end{theorem}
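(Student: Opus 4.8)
The plan is to recognize that Theorem~\ref{maintheoremanalytic.positive.thm2} is an instance of Theorem~\ref{maintheoremanalytic.thm2} applied to the augmented dynamical system \eqref{function.multiple.def221} governing the extended state vector $\widehat{\mathbf{x}}=[\omega_1 {\bf x}^T, \ldots, \omega_L {\bf x}^T]^T\in \C^{dL}$. No new analytic machinery is needed: the whole argument reduces to checking that the augmented system, together with its Carleman-Fourier linearization \eqref{Carleman.positivemultiple.def} and finite-section approximation \eqref{Carleman.positivemultiple.eq8}, fits verbatim into the framework of Section~\ref{carlemanfourierentireconvergence.subsection} under the parameter dictionary $d\mapsto dL$, $D_0\mapsto D_2$, and $\mu_0\mapsto \widehat{\mu}_0$. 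Note that the complex frequencies $\omega_l$ are absorbed into the change of variables, so that $\widehat{\mathbf{g}}(t,\widehat{\mathbf{x}})=\sum_{{\pmb \beta}}\widehat{\mathbf{g}}_{\pmb \beta}(t)e^{i{\pmb \beta}\widehat{\mathbf{x}}}$ is a genuine integer-frequency periodic vector field in $\widehat{\mathbf{x}}$.

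First I would verify the three structural hypotheses that Theorem~\ref{maintheoremanalytic.thm2} demands of the augmented system. The analyticity condition \eqref{assumption0} holds because \eqref{function.multiple.def221} involves only exponentials $e^{i{\pmb \beta}\widehat{\mathbf{x}}}$ with ${\pmb \beta}\in {\mathbb Z}_+^{dL}$, and Assumption~\ref{assump-1} holds with constants $D_2$ and $R$ by \eqref{function.positivemultiple.eq5}. For the positivity condition \eqref{assumption2}, I would use that the zero-frequency coefficient of $\widehat{\mathbf{g}}$ satisfies $\widehat g_{j,{\bf 0}}(t)=\omega_l g_{j',{\bf 0},\ldots,{\bf 0}}(t)$ whenever $j=(l-1)d+j'$, so that
\begin{equation*}
\min_{1\le j\le dL}\Im \widehat g_{j,{\bf 0}}(t)=\min_{1\le l\le L}\min_{1\le j'\le d}\Im\big(\omega_l g_{j',{\bf 0}}(t)\big)\ge \widehat{\mu}_0
\end{equation*}
by \eqref{maintheoremanalytic.positive.thm2.eq1}; hence \eqref{assumption2} holds for $\widehat{\mathbf{g}}$ with $\mu_0=\widehat{\mu}_0$.

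Next I would translate the initial-data hypothesis. Since the components of $\widehat{\mathbf{x}}_0=[\omega_1 {\bf x}_0^T, \ldots, \omega_L {\bf x}_0^T]^T$ are precisely the numbers $\omega_l x_{0,j'}$, one has $\|\exp(i\widehat{\mathbf{x}}_0)\|_2=\big(\sum_{l=1}^L\sum_{j'=1}^d e^{-2\Im(\omega_l x_{0,j'})}\big)^{1/2}=\widehat{\tau}$, so that the requirement \eqref{maintheoremanalytic.thm2.eq1} (equivalently \eqref{maintheoremanalytic.thm2.eq1+}) for the augmented system becomes $\widehat{\tau}<\widehat{\mu}_0 R/(D_2+\widehat{\mu}_0)$, which is exactly \eqref{maintheoremanalytic.positivethm2.eq2}. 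With every hypothesis verified, Theorem~\ref{maintheoremanalytic.thm2} applies and bounds $\|\widehat{\mathbf{v}}_{1,N}(t)-e^{i\widehat{\mathbf{x}}(t)}\|_\infty$ for all $t\ge 0$ by an expression of the form $C\,\big((D_2+\widehat{\mu}_0)\widehat{\tau}/(\widehat{\mu}_0 R)\big)^N$, with the constant $C$ and the convergence ratio read off from \eqref{maintheoremanalytic.thm2.eq02} under the dictionary above.

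Finally I would unfold the block structure to recover the componentwise statement. The entries of the first block $\widehat{\mathbf{w}}_1=[e^{i{\pmb \beta}\widehat{\mathbf{x}}}]_{{\pmb \beta}\in {\mathbb Z}_{+,1}^{dL}}$ are indexed by the standard basis vectors of ${\mathbb Z}^{dL}$, and the choice ${\pmb \beta}={\bf e}_{j'+(l-1)d}$ selects the $(j'+(l-1)d)$-th coordinate of $\widehat{\mathbf{x}}$, namely $\omega_l x_{j'}$; thus that entry equals $e^{i\omega_l x_{j'}(t)}$, while the matching entry of $\widehat{\mathbf{v}}_{1,N}(t)$ is $\widehat v_{j'+(l-1)d,N}(t)$. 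Consequently $\|\widehat{\mathbf{v}}_{1,N}(t)-e^{i\widehat{\mathbf{x}}(t)}\|_\infty=\sup_{1\le l\le L}\sup_{1\le j'\le d}|\widehat v_{j'+(l-1)d,N}(t)-e^{i\omega_l x_{j'}(t)}|$, which delivers \eqref{maintheoremanalytic.thm2.eq2}. The only step requiring genuine care is the hypothesis bookkeeping—above all confirming that the positivity constant survives the embedding as $\widehat{\mu}_0$ rather than being diluted by the $L$-fold augmentation—but this presents no real obstacle, since the construction in Section~\ref{multifrequency.entirerange.section} was engineered precisely so that the augmented system inherits each property Theorem~\ref{maintheoremanalytic.thm2} requires.
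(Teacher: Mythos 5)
Your reduction strategy is exactly the paper's: the paper proves this theorem by ``following the argument in Theorem \ref{maintheoremanalytic.thm2}'' applied to the augmented system \eqref{function.multiple.def221}, and your hypothesis bookkeeping --- analyticity of $\widehat{\bf g}$, Assumption \ref{assump-1} with constants $(D_2,R)$ via \eqref{function.positivemultiple.eq5}, the positivity condition \eqref{assumption2} with $\mu_0=\widehat\mu_0$, the identity $\|\exp(i\widehat{\bf x}_0)\|_2=\widehat\tau$, and the unfolding of the first block into the components $e^{i\omega_l x_{j'}(t)}$ --- is all correct and constitutes the substance of that reduction.

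There is, however, a genuine gap at the last step, where you propose to read the constant off \eqref{maintheoremanalytic.thm2.eq02} ``under the dictionary.'' That display has prefactor $D_0R/\mu_0$, so your argument yields
\[
\|\widehat{\bf v}_{1,N}(t)-e^{i\widehat{\bf x}(t)}\|_\infty \le \frac{D_2R}{\widehat\mu_0}\left(\frac{(D_2+\widehat\mu_0)\,\widehat\tau}{\widehat\mu_0 R}\right)^N,
\]
whereas the inequality \eqref{maintheoremanalytic.thm2.eq2} you must prove has prefactor $\widehat\mu_0 R/D_2$. These are not interchangeable: taking $k=0$ in \eqref{function.positivemultiple.eq5} gives $\sum_{j}|\widehat g_{j,{\bf 0}}(t)|\le D_2$, and combining with \eqref{maintheoremanalytic.positive.thm2.eq1} gives $\widehat\mu_0\le\Im \widehat g_{j,{\bf 0}}(t)\le|\widehat g_{j,{\bf 0}}(t)|\le D_2$, so that $\widehat\mu_0 R/D_2\le R\le D_2R/\widehat\mu_0$; your bound is therefore strictly weaker than the claimed one whenever $\widehat\mu_0<D_2$. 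The root cause is an inconsistency inside the paper: the proof of Theorem \ref{maintheoremanalytic.thm2} in Section \ref{maintheoremanalytic.thm2.pfsection} actually produces the prefactor $\mu_0R/D_0$, not $D_0R/\mu_0$ --- take $k=1$ in the induction estimate
\[
\sum_{l=k}^N  R^{-l}\|{\bf u}_l(s)\|_{\infty} + \frac{\mu_0}{ D_0} \Big(\frac{\|\exp(i{\bf x}_0)\|_2}{R}\Big)^{N}
\le \frac{\mu_0}{D_0}\Big(\frac{D_0+\mu_0}{\mu_0}\Big)^{N+1-k} \Big(\frac{\|\exp(i{\bf x}_0)\|_2}{R}\Big)^{N}
\]
and multiply by $R$ --- and it is this proof-level constant, not the stated display \eqref{maintheoremanalytic.thm2.eq02}, that the present theorem transcribes. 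Consequently you cannot invoke \eqref{maintheoremanalytic.thm2.eq02} as a black box; you must rerun that induction for the augmented system (with $D_0\mapsto D_2$, $\mu_0\mapsto\widehat\mu_0$, $\|\exp(i{\bf x}_0)\|_2\mapsto\widehat\tau$, including the analogue of Lemma \ref{maintheoremanalytic.thm2.lem1} guaranteeing $\|\widehat{\bf w}_1(t)\|_2\le\widehat\tau$ for all $t\ge 0$), which then delivers exactly $\frac{\widehat\mu_0 R}{D_2}\big(\frac{(D_2+\widehat\mu_0)\widehat\tau}{\widehat\mu_0 R}\big)^N$. With that replacement --- rerunning the argument rather than citing the theorem --- the rest of your write-up goes through unchanged; note also that the ``genuine care'' you flag (whether $\widehat\mu_0$ is diluted by the $L$-fold augmentation) is not where the difficulty lies.
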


We remark that the positive imaginary assumption \eqref{maintheoremanalytic.positive.thm2.eq1}
is satisfied for some  $\widehat \mu_0>0$ if all fundamental frequencies $\omega_1, \omega_2, \ldots, \omega_L$ are positive and
$\min_{1\le j\le d} \Im g_{j, {\bf 0}}(t)\ge \mu_0$ for all $t\ge 0$ and some $\mu_0>0$.

\section{Numerical demonstrations}
\label{demonstration.section}

 In this section, we first  consider
 the complex dynamical system
\eqref{simpleexample2.eq1} and test the performance of the corresponding  Carleman  and  Carleman-Fourier linearization.
We observe that the shifted and dilated state  $\tilde{x}(t)=x(t/|a|)+
\ln b$ satisfies \eqref{simpleexample2.eq1} with parameters $a$ and $b$ replaced by $a/|a|$ and $1$ respectively,
where we define  $\ln(z)=\ln|z|+i{\rm Arg}(z)$ for a nonzero complex number $z\neq 0$, and ${\rm Arg}(z)$ as its angle  in $(-\pi, \pi]$.
Also we notice that the reflected state
$-\Re x+i\Im x$ satisfies \eqref{simpleexample2.eq1} with $a$ and $b$ replaced by $-\Re a+i \Im a$ and $\bar b$ respectively.
Thus in our simulations of this section,   we normalized the complex dynamical system \eqref{simpleexample2.eq1}
so that its  parameters $a$ and $b$
satisfy
\begin{equation} \label{simpleexample2.eq2a}
b=1 \ \ {\rm and}\ \
 a=e^{i\phi} \ \ {\rm for \ some} \  \ \phi\in [-\pi/2, \pi/2].
\end{equation}
With the above normalization, one may verify that  the complex dynamical system
\eqref{simpleexample2.eq1} has the origin as an equilibrium  and
 its solution can be  explicitly  expressed as
\begin{equation} \label{simpleexample2.eq2}
    x(t) = at + x_0 + i \ln\left(1 +(e^{ait} - 1)e^{ix_0}\right)
\end{equation}
in a short time period.
Depending on the parameter $a$ and the initial $x_0$,  the corresponding trajectory of
 the complex dynamical system \eqref{simpleexample2.eq1} may blow up at a finite time, exhibit a limit cycle, converge or diverge, see
 Figure \ref{fig:solution1} and
 Section \ref{Carleman.simulationsection1} for detailed description on the behavior of the dynamical system \eqref{simpleexample2.eq1}.

The governing vector field $a-ae^{ix}$ of  the complex dynamical system
\eqref{simpleexample2.eq1}  satisfies the equilibrium condition  \eqref{zeroequil.eq}, the analytic property
\eqref{comparison.eq00} and the uniform decay Assumption \ref{assump-1}
for its Fourier coefficients with $D_0=\max(1, R)$ and arbitrary $R>0$. Therefore, the Carleman linearization and Carleman-Fourier linearization proposed in Sections \ref{carleman.subsection} and \ref{sec:finite-section} apply for the complex dynamical system \eqref{simpleexample2.eq1}.
Furthermore, we show  that the first block $v_{1, N}$ of finite-section approximation to the Carleman-Fourier linearization is essentially the Taylor expansion of order $N-1$ for the function $w_1(t)=\exp(ix(t))$ of the original state function
$x(t)$, see \eqref{simpleexample2.eq5}.
As a  consequence, for any initial state $x_0$ and in the time range $[0, T^*]$,  we have explicit approximation error $| e^{ix_0}|^N \sup_{0\le t\le T^*}| e^{iat}-1|^N$ for the finite-section approximation \eqref{simpleexample2.eq4} to the Carleman-Fourier linearization of the dynamical system \eqref{simpleexample2.eq1},
see \eqref{simpleexample2.eq5+}.
Our simulations in Section  \ref{Carleman.simulationsection1} demonstrate
 theoretical results in Theorems  \ref{maintheoremanalytic.thm1}  and \ref{maintheoremanalytic.thm2} that
the finite-section approximation \eqref{simpleexample2.eq4} has its approximation error independent on the real part of the initial $x_0$,
and the finite-section approximation \eqref{simpleexample2.eq4} has smaller approximation error when
the imagery part of the initial $x_0$ takes larger values, where the governing field is well-approximated by trigonometric polynomials of low degrees.

In Section  \ref{Carleman.simulationsection1}, we also test the performance of the classical Carleman linearization. As expected, the Carleman linearization is a superior linearization technique for the  nonlinear dynamical system \eqref{simpleexample2.eq1} when the initial $x_0$ is not far away from the origin.
Comparing with the Carleman-Fourier linearization, our numerical simulation shows
 that the finite-section approximation of the Carleman-Fourier linearization
 exhibits  exponential convergence on the entire range if $\Im a\ge 0$ and  $\Im x_0>\ln 2$,
while   the finite-section approximation of  the Carleman linearization
has exponential convergence on the entire range when $\Im a<0$.
The possible reason  is that the dynamical system \eqref{simpleexample2.eq4} associated with the finite-section approximation of
the Carleman-Fourier linearization is stable when $\Im a>0$, while
the dynamical system  \eqref{simpleexample2.eq7} associated with  the finite-section approximation of the Carleman linearization is
stable when $\Im a<0$.

Next in Section \ref{kuramoto.section}, we delve into the Kuramoto model \eqref{Kuramoto.def} and showcase the effectiveness of the Carleman-Fourier linearization presented in Sections \ref{sec:finite-section} and \ref{multiple.section}.
The Kuramoto model has been extensively employed to analyze the dynamical behavior of coupled oscillators, and it
 captures
the essence of how individual components, despite differing
intrinsic frequencies, can achieve collective coherence through
mutual interaction \cite{bronski2021, dietert2016, guo2021, heggli2019, ji2014, kowalski1991nonlinear}.
Define the rescaled phases $\hat \theta_p$ and neutralized intrinsic  frequencies
$\hat\omega_p, 1\le p\le d$, by
$$\hat \theta_p(t)= \theta_p \left(\frac{d}{|K|}t\right)- \frac{\hat \omega d}{|K|}t- \frac{1}{d}\sum_{q=1}^d \theta_q(0)
\ \ {\rm and} \ \ \hat \omega_p= \frac{d(\omega_p- \hat \omega)}{|K|}, 1\le p\le d,$$
where $\hat \omega=\sum_{q=1}^d w_q/d$.
Then one may verify that  the rescaled phases
 $\hat \theta_p$ satisfies \eqref{Kuramoto.def} with intrinsic natural frequencies being neutralized and
 the coupling strength $K$ replaced by $Kd/|K|$.
 With the above normalization, we may assume that intrinsic natural frequencies are neutralized and the coupling strength
 and the initial frequencies are normalized,
 \begin{equation} \label{kuramoto.eq3}
\sum_{p=1}^d\omega_p=0,  \ |K|=d \ \ {\rm and}\ \ \sum_{q=1}^d \theta_q(0)=0,
\end{equation}
in the Kuramoto model.  With the above normalization,  we observe that the phases $\theta_p$ in \eqref{Kuramoto.def} satisfy
\begin{equation}\label{kuramoto.eq2}
\sum_{q=1}^d \theta_q (t)=0.
\end{equation}
By \eqref{kuramoto.eq2}, we can reformulate \eqref{Kuramoto.def} as
\begin{equation}\label{Kutamoto.def2}
 \dot \theta_p \Sp = \Sp \omega_p \Sp + \Sp \frac{K}{2di} \Sp \sum_{q=1}^d ~\Big[ e^{i(\theta_q+\sum_{q'\ne p} \theta_{q'})}-e^{i(\theta_p+\sum_{q'\ne q}\theta_{q'})}\Big], \ 1\le p\le d.
\end{equation}
Therefore the  vector field  is analytic on the shifted upper half plane and the Carleman-Fourier linearization proposed in Section \ref{sec:finite-section}
is applicable to the nonlinear system \eqref{Kutamoto.def2}, see the plots on the second row of Figure \ref{CarlemanFourierKuramotomodel1.fig} for the approximation error for its finite-section approach.

The governing field in the Kuramoto model \eqref{Kuramoto.def} is a vector-valued trigonometric function about $\theta_p, 1\le p\le d$, and hence the
Carleman-Fourier linearization proposed in Section \ref{multiple.section}
is applicable to the nonlinear system \eqref{Kuramoto.def}, see the plots in the bottom row of  Figure
\ref{CarlemanFourierKuramotomodel1.fig} for the  approximation error for its finite-section approach.
From the comparison of the Carleman-Fourier linearization of the Kuramoto model \eqref{Kutamoto.def2} in Figure \ref{CarlemanFourierKuramotomodel1.fig} where $d=3$, we observe that for the same order $N$, the finite-section approximation of the Carleman-Fourier linearization in Section \ref{sec:finite-section}
exhibits better approximation properties than  the finite-section approximation of the  Carleman-Fourier linearization in Section \ref{multiple.section} does,
and moreover, for large approximation order $N$,  the size $\binom {N+3}{3}-1$ of the finite-section approximation in Section \ref{sec:finite-section} is much smaller than
the size $\binom {N+4}{4}-1$ of the finite-section approximation in Section \ref{multiple.section} in our simulations.
Additionally, we observe that the finite-section approximations in Sections \ref{sec:finite-section} and \ref{multiple.section} demonstrate excellent approximation performance near an equilibrium point, and on  the sides of a parallelogon where the vector field has small amplitudes.

With the normalization in \eqref{kuramoto.eq3}, the governing vector field
of the Kuramoto model is analytic and hence the classical Carleman linearization  is applicable
for the nonlinear dynamical system  \eqref{Kuramoto.def}, see   Figure \ref{Carlemankuramotomodel.fig}.
 Similar to numerical demonstration in \cite{moteesun2024} for the Carleman linearization and
  Carleman-Fourier linearization of the Kuramoto model \eqref{Kutamoto.def2} with $d=2$, we  see that
  for $d=3$,
the finite-section approximation to the Carleman linearization exhibits exponential
convergence even when the initial is not far away from the origin, and Carleman-Fourier
linearization delivers much accurate linearizations for the Kuramoto model
over more extensive neighborhoods surrounding the equilibrium point, outperforming
traditional Carleman linearization except the natural frequencies  and the initials
 are close to the origin.

\subsection{Comparing Carleman-Fourier Linearization with  Carleman Linearization}\label{Carleman.simulationsection1}
In this subsection, we discuss the behavior of the dynamical system \eqref{simpleexample2.eq1}, and we
 demonstrate and compare the performance of its Carleman-Fourier linearization and Carleman linearization.

First we consider  the behavior of the dynamical system \eqref{simpleexample2.eq1}.
By \eqref{simpleexample2.eq2}, the solution $x(t)$ of the complex dynamical system \eqref{simpleexample2.eq1} may blow up at a finite time $t=t_0>0$ if
\begin{equation}\label{simpleexample2.eq2b}
    1 +(e^{ait_0} - 1)e^{ix_0} = 0 \quad \text{and} \quad 1 +(e^{ait} - 1)e^{ix_0} \neq 0 \  \ {\rm for \ all} \ \ 0\le t< t_0,
\end{equation}
see the black trajectories shown in Figure \ref{fig:solution1}
where simulation parameters $x_0=i\ln  (1-e^{ai\pi/2})$ for $a=1,  i, -i$ respectively.
 One may verify that the requirement \eqref{simpleexample2.eq2b} for the initial state vector $x_0$ is met for some $t_0 > 0$ when $\Re(e^{ix_0}) = \frac{1}{2}$ and $a= 1$, or when $\Re x_0\in 2\pi {\mathbb Z}+\pi$ and $a=-i$, or when $\Re x_0\in 2\pi {\mathbb Z}$, $\Im x_0<0$ and $a=i$.

\begin{figure}[t]
  \centering
    \includegraphics[width=4.9cm, height=3.8cm]{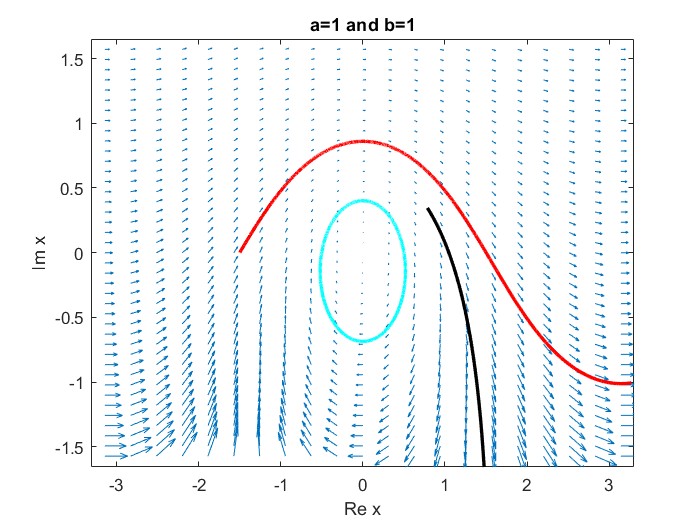}
       \includegraphics[width=4.9cm, height=3.8cm]{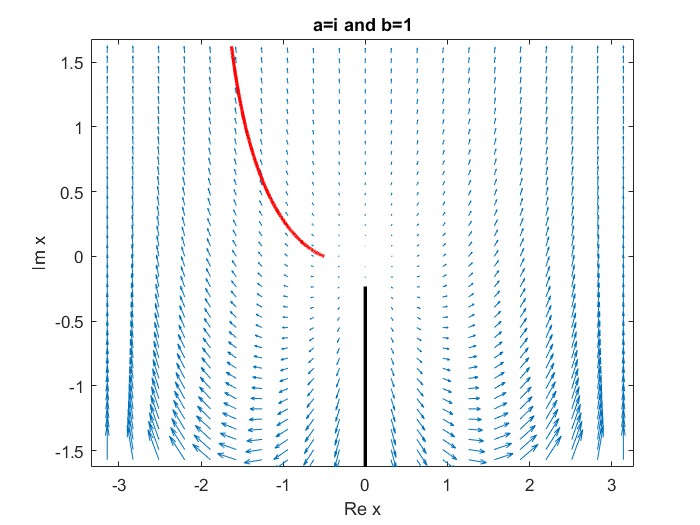}
       \includegraphics[width=4.9cm, height=3.8cm]{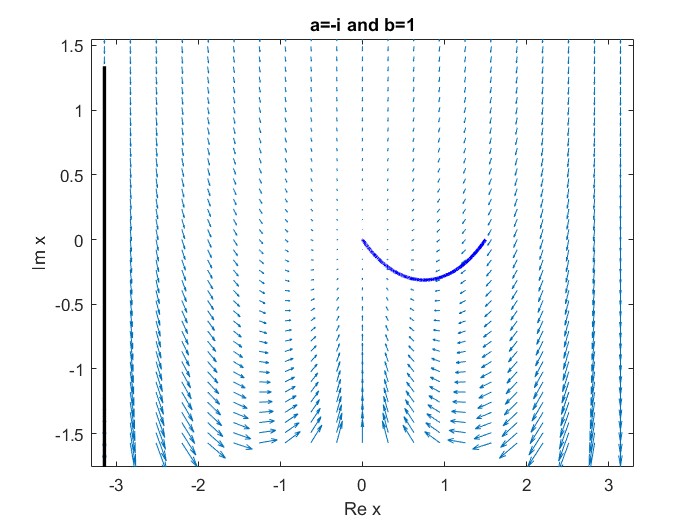}
        \captionsetup{width=1\linewidth}
\caption{Plotted are the vector fields $ a(1-e^{ix})$ of the complex dynamical system \eqref{simpleexample2.eq1} with  $a=1$ (left),  
$a=i$ (middle) and $a=-i$ (right), where $-\pi\le \Re x\le \pi$ and $-\pi/2\le \Im x\le \pi/2$. Trajectories
on the left figure have parameters $a=1$  
   and initial
 $x_0= i\ln  (1-e^{ai\pi/2})\approx 0.7854  + 0.3466i$ (in black), $-1/2$ (in cyan) and $-3/2$ (in  red).
 Presented in the middle are trajectories with $a=i$ and $x_0= i\ln  (1-e^{ai\pi/2}) \approx - 0.2330i$ (in black) and $-1/2$ (in red),
 while on the right are trajectories with $a=-i$ and $x_0= i\ln  (1-e^{ai\pi/2}) \approx -3.1416 + 1.3378i$ (in black) and $3/2$ (in blue).
 Trajectories shown in the figures may
 blow up in a finite time (in black), have limit cycle (in cyan), converge (in  blue) and  diverge (in red).
}
  \label{fig:solution1}
\end{figure}

Now we continue examining the behavior of the dynamical system \eqref{simpleexample2.eq1} when the initial vector $x_0$ does not satisfy condition \eqref{simpleexample2.eq2b} for all $t_0> 0$, i.e., $1 +(e^{ait} - 1)e^{ix_0}\ne 0$ for all $t\ge 0$.
For the case that $\Im a=0$, i.e., $a=1$.
 we observe that $e^{-ix_0}(1+(e^{it}-1)e^{ix_0}), t\ge 0$, is a  circle with center  $e^{-ix_0}-1$ and radius 1.
Therefore  $\ln \left(1-(e^{it}-1)e^{ix_0}\right)$ is a periodic function with a period of $2\pi$ when $|e^{-ix_0}-1|>1$, and
$\ln \left(1-b(e^{it}-1)\right)-it$ is a periodic function with the same period of $2\pi$ when  $|e^{-ix_0}-1|<1$. This implies that when $a=1$, the dynamical system \eqref{simpleexample2.eq1} diverges when $|e^{-ix_0}-1|<1$ and exhibits a limit cycle when $|e^{-ix_0}-1|>1$.
 These behaviors are illustrated by the cyan color limit cycle trajectory in Figure \ref{fig:solution1} with a period of $2\pi$ and the red color trajectory in Figure \ref{fig:solution1}, where $x(t)-t$ forms a periodic function with a period of $2\pi$.

For the case that $\Im a\ne 0$, we observe that
(i) $\lim_{t\to \infty} 1+e^{ix_0}(e^{ait}-1)=1-e^{ix_0}$ when $\Im a>0$; and (ii)
  $\lim_{t\to \infty} e^{-iat} \left(1+e^{ix_0}(e^{ait}-1)\right)=e^{ix_0}$ when $\Im a<0$.
Therefore, the dynamical system \eqref{simpleexample2.eq1} converges when $\Im a<0$, diverges  when $\Im a>0$ and $x_0\not\in 2\pi {\mathbb Z}$, and the solution of the dynamical system \eqref{simpleexample2.eq1} remains at  equilibria $x_0\in 2\pi {\mathbb Z}$.
 This behavior is illustrated by the green color trajectory in the right plot of  Figure \ref{fig:solution1},  
 and the  red color trajectory in the middle plot of Figure \ref{fig:solution1}. 

\medskip

Next, we consider the Carleman-Fourier linearization of the complex dynamical system \eqref{simpleexample2.eq1}.
Set  $w_k=e^{ikx}, k\ge 1$.
Using equations \eqref{simpleexample2.eq1} and \eqref{simpleexample2.eq2a}, we can derive the following equation
\begin{equation} \label{simpleexample2.eq3}
    \Dot{w}_k = ika  w_k - ika  w_{k+1},
\end{equation}
with initial conditions $w_k(0)=\exp(ikx_0)$ for $k\ge 1$. Consequently, the Carleman-Fourier linearization of the complex dynamical system \eqref{simpleexample2.eq1} can be represented by
\begin{equation}
\label{simpleexample2.eq3}
\begin{bmatrix}
    \Dot{{w}}_{1} \\ \Dot{{w}}_{2}  \\ \vdots  \\ \vdots \\ \Dot{{w}}_{N-1}  \\ \Dot{{w}}_{N} \\ \vdots
\end{bmatrix}
    =\begin{bmatrix}
     ai & -ai  &  \ldots&  \ldots & 0 & 0 & \ldots  \\
      &2ai &      \ldots &\ldots & 0 & 0 & \ldots \\
      & & \ddots & \ddots  & \vdots & \vdots & \ldots\\
    &    & & \ddots  & \ddots & \vdots & \ldots \\
      & & &   &(N-1)a i & -(N-1)ai & \ldots\\
      & & &  &  &N ai  & \ddots\\
            & & &  &  &  &\ddots  \\
      \end{bmatrix}
      \begin{bmatrix}
    {w}_{1} \\ {w}_{2} \\ \vdots\\ \vdots \\ w_{N-1} \\{ w}_{N} \\ \vdots
\end{bmatrix}.
\end{equation}
Additionally, the corresponding finite-section approximation is given by
\begin{equation}
\label{simpleexample2.eq4}
\begin{bmatrix}
    \Dot{{v}}_{1,N}(t) \\ \Dot{{v}}_{2,N}(t) \\ \vdots \\ \Dot{{ v}}_{N-1,N}(t)\\ \Dot{{ v}}_{N,N}(t)
\end{bmatrix}
    =\begin{bmatrix}
    ai& -ai& \dots & 0 & 0 \\
      & 2ai & \cdots & 0 & 0\\
      & & \ddots & \vdots & \vdots \\
          & & & (N-1)ai &  -(N-1)ai\\
      & & &  & Nai
      \end{bmatrix}
      \begin{bmatrix}
    {v}_{1,N}(t) \\ {v}_{2,N}(t) \\ \vdots \\ v_{N-1, N}(t) \\{v}_{N,N}(t)
\end{bmatrix}
\end{equation}
with initial conditions $v_{k, N}(0)=\exp(ikx_0)$ for $1\le k\le N$. By induction on $k=N, N-1, \ldots, 1$, it can be verified that:
\begin{equation*} \label{simpleexample2.eq5-}
v_{k, N}(t)= e^{ik(at+x_0)} \Sp \sum_{l=0}^{N-k} ~ \frac{(k+l-1)!}{(k-1)! \Sp l!} \Sp \big(-e^{ix_0}(e^{iat}-1)\big)^l, \ \ 1\le k\le N,
\end{equation*}
 serve as the solution of the finite-section approximation \eqref{simpleexample2.eq4}. It is worth noting that
\begin{equation} \label{simpleexample2.eq5}
v_{1, N}(t)=e^{i(at+x_0)} \sum_{l=0}^{N-1} \big(-e^{ix_0}(e^{iat}-1)\big)^l
\end{equation}
essentially represents the Taylor polynomial of order $N-1$ for the exponential function $w_1(t)=e^{i(at+x_0)} \big(1+e^{ix_0}  (e^{iat}-1)\big)^{-1}=e^{ix(t)}$ of the original state function $x(t)$. Therefore the approximation error is given by:
\begin{equation} \label{simpleexample2.eq5+}
|v_{1,N}(t)e^{-ix(t)}-1|=|e^{ix_0}(e^{iat}-1)|^N,  \ \  N\ge 1.
\end{equation}

\begin{figure}[t]
  \centering
   \includegraphics[width=4.9cm,  height=3.8cm]{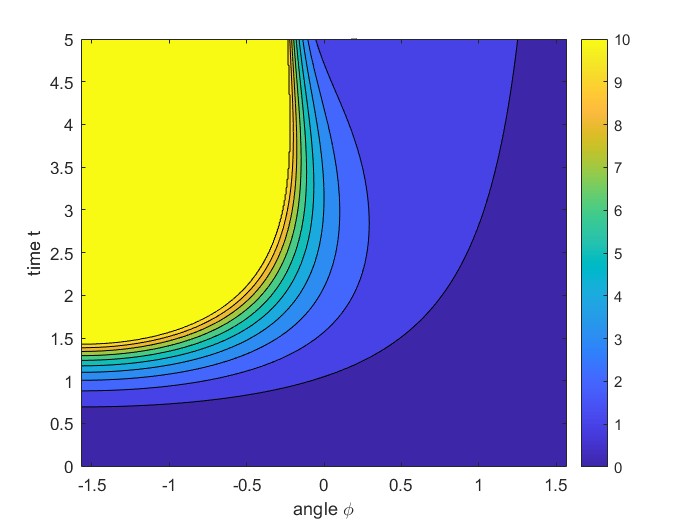}
   \includegraphics[width=4.9cm,  height=3.8cm]{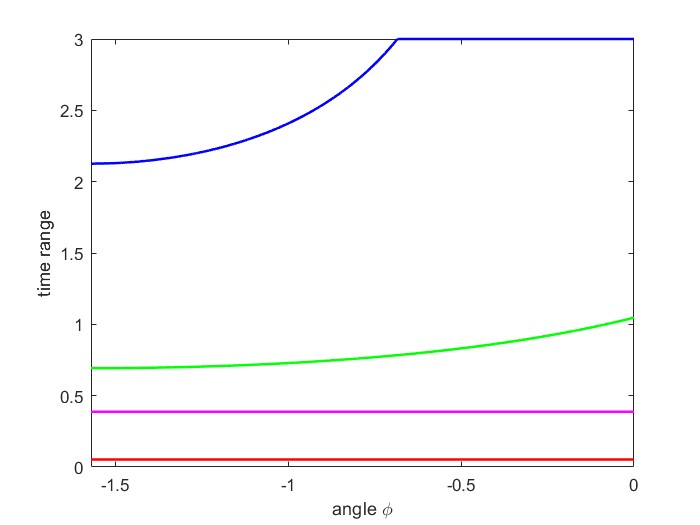}
    \includegraphics[width=4.9cm,  height=3.8cm]{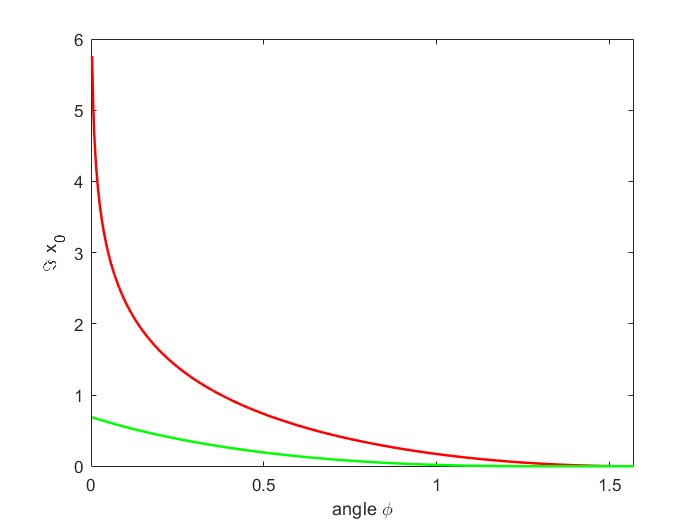}
        \captionsetup{width=1\linewidth}
\caption{Plotted on the left is  the function $\min\{ h(\varphi, t), 10\},  -\pi/2\le \varphi\le \pi/2, 0\le t\le 5$, where $h$ is given in \eqref{hh.def00}.
Presented in the middle  is the actual time range $\min(T^*(\varphi), 3), -\pi/2\le \varphi\le 0$, in \eqref{actualtimerage.orderone}, where $\Im x_0=0$ (in green) and
 $\Im x_0=2$ (in blue), and
the  time range $T_{CF}^*$ in Theorem  \ref{maintheoremanalytic.thm1} 
 when $\Im x_0=0$ (in red) and
 when $\Im x_0=2$ (in magenta).
Shown on the right
is the  requirement on the initial $x_0$ for the exponential convergence on $[0, \infty)$,
which is $\frac{1}{2} \ln \sup_{t\ge 0} h(\varphi, t), 0<\varphi\le \pi/2$, in \eqref{timerange.orderoneeq1} (in green) and the theoretical lower  bound
$-\ln \sin \varphi, 0<\varphi\le \pi/2$ in Theorem  \ref{maintheoremanalytic.thm2} (in red). }
  \label{fig:solu1}
\end{figure}

Using the expression \eqref{simpleexample2.eq2a} for the parameter $a$, we define  
\begin{equation}
\label{hh.def00}
h(\phi, t)=|e^{iat}-1|^2= e^{-2t \sin \phi}-2 e^{-t\sin \phi} \cos(t\cos \phi)+1, \ t\ge 0,
\end{equation}
see the left plot of Figure \ref{fig:solu1} for the function $\min (h(\phi, t), 10), -\pi/2\le \phi\le \pi/2, 0\le t\le 5$.
 By \eqref{simpleexample2.eq5+},
  the first block, $v_{1, N}(t)$ for $N\ge 1$, of the finite-section approximation \eqref{simpleexample2.eq4} exhibits exponential convergence to $w_1(t)=e^{ix(t)}$ in the time interval $[0, T^*]$ if the condition
\begin{equation} \label{simpleexample2.eq6}
h(\phi, t) < e^{2 \Im x_0} \quad \text{for all} \quad 0\le t <  T^*
\end{equation}
is satisfied, c.f.  Theorems \ref{maintheoremanalytic.thm1} and \ref{maintheoremanalytic.thm2}.

In the case that  $\phi\in [-\pi/2, 0)$,  the function $ h(\phi, t), t\ge 0$ is unbounded. Therefore,  for any initial state $x_0$,
the actual time range
\begin{equation} \label{actualtimerage.orderone}
T^*(\phi)=\sup\{T^*\ |\    \eqref{simpleexample2.eq6}\  {\rm   holds}\}
\end{equation}
for the convergence of $v_{1, N}(t), N\ge 1$, is finite. Illustrated
in  the middle plot of Figure \ref{fig:solu1}
 are the maximal time range $\min (T^*(\phi), 5), -\pi/2\le \phi\le 0$, for $\Im x_0=0$ (in green) and $\Im x_0=2$ (in blue).
We remark that  the
 time range $T_{CF}^*$ in \eqref{maintheoremanalytic.thm1.eq4}, per  Theorem \ref{maintheoremanalytic.thm1}, is given by
\begin{eqnarray}\label{maximaltimerage.orderone}
T_{CF}^*& \hskip-0.08in = & \hskip-0.08in \sup_{\ln R+\Im x_0-1>0} \frac{e-1}{(2e-1)\max(1, R)} (\ln R+\Im x_0-1)\nonumber\\
& \hskip-0.08in = & \hskip-0.08in \frac{e-1}{2e-1}\times \left\{\begin{array}
{ll} \exp(\Im x_0-2) & {\rm if} \ \Im x_0\le 2,\\
(\Im x_0-1)  & {\rm if} \ \Im x_0>2,
\end{array}\right.
\end{eqnarray}
see the middle plot of Figure \ref{fig:solu1} where
 $T_{CF}^*\approx 0.0524$ for $\Im x_0=0$ (in red) and
$T_{CF}^*\approx  0.3873$ for $\Im x_0=2$ (in magenta).
We observe that  the time range $T_{CF}^*$ in \eqref{maintheoremanalytic.thm1.eq4} is independent on the selection of $a=\exp(i\phi)$, and it is much smaller than the actual time range $T^*(\phi), -\pi/2\le \phi<0$, for the exponential convergence of the finite-section approximation to the Carleman-Fourier  linearization.

For the scenarios when $\phi=0$, 
 it can be verified that the maximum time range for the convergence of $v_{1, N}(t)$ can be evaluated explicitly,
\begin{equation*}
T^*(\phi)=\left\{\begin{array}{ll}
2 \arcsin \frac{\exp(\Im x_0)}{2} & {\rm if} \quad  \Im x_0\le \ln 2\\
+\infty & {\rm otherwise}.\end{array} \right.
\end{equation*}
Illustrated in the middle plot of
 Figure \ref{fig:solu1} is
 $T^*(0)\approx \pi/3\approx 1.0472$ for $\Im x_0=0$ (in red) and
$T^*(0)=+\infty$ for $\Im x_0=2$ (in blue).

 For the case when $\phi\in (0, \pi/2]$, we have $0\le h(\phi,t)\le 4$, and the constants $D_0$ and $\mu_0$ in \eqref{assumption1} and \eqref{assumption2} are given by $\mu_0=\Im a=\sin \phi$ and $D_0=\max(1, R)$ with arbitrary $R>0$. Using \eqref{simpleexample2.eq5}, we can conclude that the first block $v_{1, N}(t)$ for $N\ge 1$ in the finite-section approximation \eqref{simpleexample2.eq4} provides a satisfactory approximation to $w_1(t)=e^{ix(t)}$ over the entire time range $[0, \infty)$, provided that
 \begin{equation}\label{timerange.orderoneeq1} \Im x_0>\frac{1}{2} \ln h(\phi, t) \ {\rm for \ all} \ t\ge 0,\end{equation}
 which is the region above the green line on the right plot of Figure \ref{fig:solu1}.
   The requirement \eqref{maintheoremanalytic.thm2.eq1} for the initial condition, as per Theorem \ref{maintheoremanalytic.thm2}, is given by
\begin{equation} \label{timerange.orderoneeq2} \Im x_0> - \ln \sup_{R>0}  \frac{\mu_0 R}{D_0+\mu_0}= - \ln \sup_{R>0}  \frac{R\sin \phi}{\max(1, R)+\sin \phi}=-\ln \sin\phi,
\end{equation}
 which is the region above the  red line on the right plot of Figure \ref{fig:solu1}.
 The lower bounds in \eqref{timerange.orderoneeq1} and \eqref{timerange.orderoneeq2} for  the imaginary part
 of the initial state $x_0$ are the same for $\phi=\pi/2$, since  
 $\sup_{t\ge 0} h(\pi/2, t)= \sup_{t\ge 0} |e^{-t}-1|^2=1$.
From the right plot of Figure \ref{fig:solu1} we observe that
 \begin{equation}\label{timerange.inequality}
 -\ln \sin\phi > \frac{1}{2} \ln h(\phi, t)\ \ {\rm for \ all}  \ t\ge 0\ {\rm and}\ 0<\phi<\pi/2,\end{equation}
 see Section \ref{timerange.inequality.pfsection} for the detailed proof.

 Figures \ref{fig:cferrorOrderOne} depicts the approximation performance of the finite-section approach \eqref{simpleexample2.eq4}, where $a=e^{i\phi}$ and
\begin{eqnarray}
\label{fouriererror.def}
E_{CF}(x_0, T^*, N) &\hskip-0.08in  = & \hskip-0.08in  \max_{0\le t\le T^*} \log_{10}  \big| v_{1, N}(t) e^{-ix(t)}-1 \big|\nonumber\\
 & \hskip-0.08in  = & \hskip-0.08in
N \Big(-\Im x_0\log_{10} e+ \frac{1}{2}\log_{10} \big(\max_{0\le t\le T^*}  h(\phi, t)\big)\Big).
\end{eqnarray}
This demonstrates that the first component $v_{1, N}(t)$ in the finite-section approximation \eqref{simpleexample2.eq4} provides a better approximation to the original state $x(t)$ of the dynamical system \eqref{simpleexample2.eq1} in a longer time range when $\phi\in [-\pi/2, 0)$ and in the whole time range $[0, \infty)$ when $\phi\in (0, \pi/2]$, provided that  the imaginary  $\Im x_0$ of initial state $x_0$ takes larger value.
It is also observed that the proposed Carleman-Fourier linearization  has
better performance for the
complex dynamical system \eqref{simpleexample2.eq1} with the parameter $a$ having positive  imaginery part
than for  the one with the parameter $a$ having negative imaginery part. We believe that the possible reason
is that  the  finite-section approximation
\eqref{simpleexample2.eq4}  associated with the Carleman-Fourier linearization of the corresponding  dynamical system
 is stable when $\Im a<0$, while it is unstable when $\Im a>0$.

\begin{figure}[t]
  \centering
    \includegraphics[width=4.9cm, height=3.8cm]{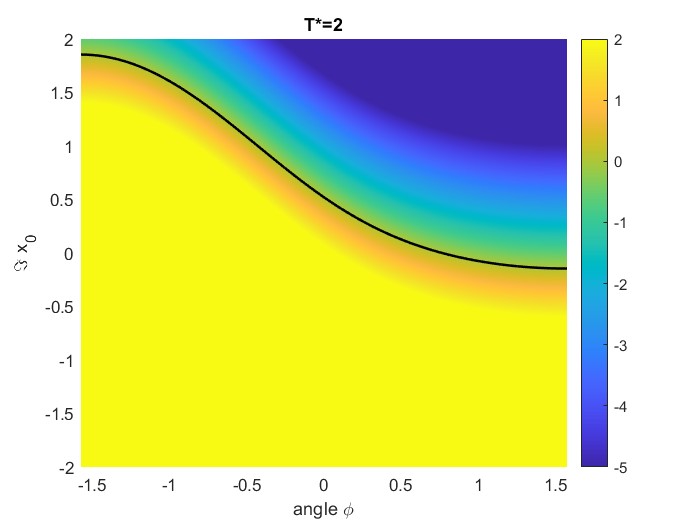}
    \includegraphics[width=4.9cm, height=3.8cm]{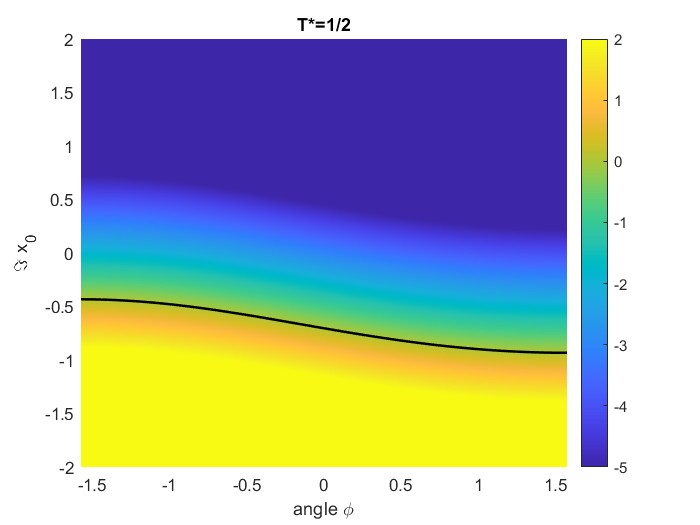}
    \includegraphics[width=4.9cm, height=3.8cm]{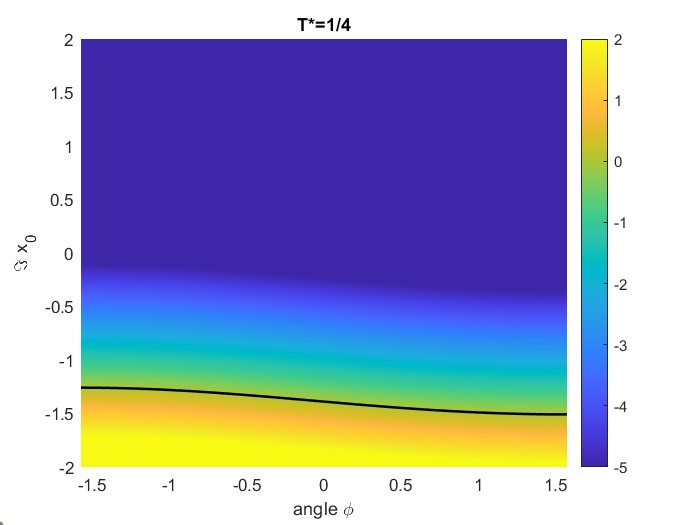}
    \\
\includegraphics[width=4.9cm, height=3.8cm]{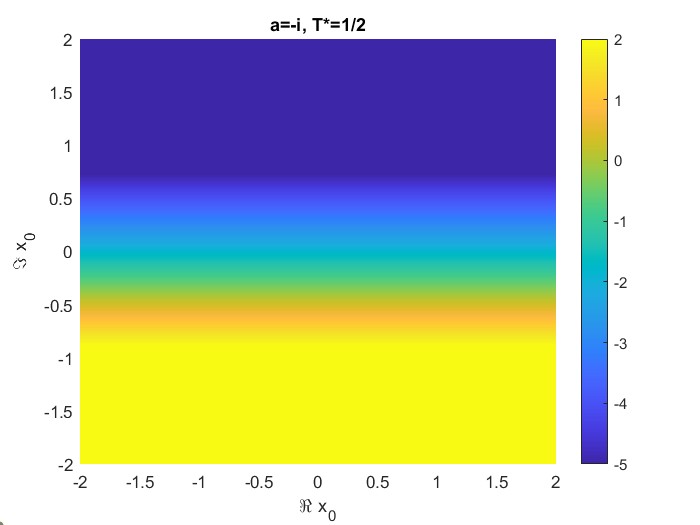}
    \includegraphics[width=4.9cm, height=3.8cm]{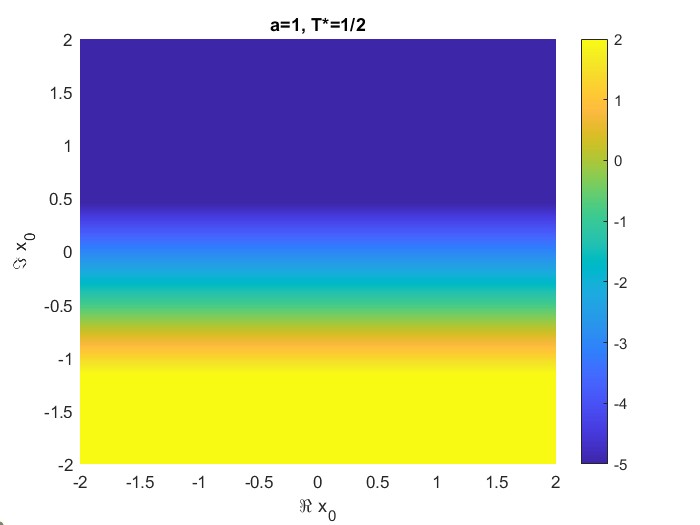}
\includegraphics[width=4.9cm, height=3.8cm]{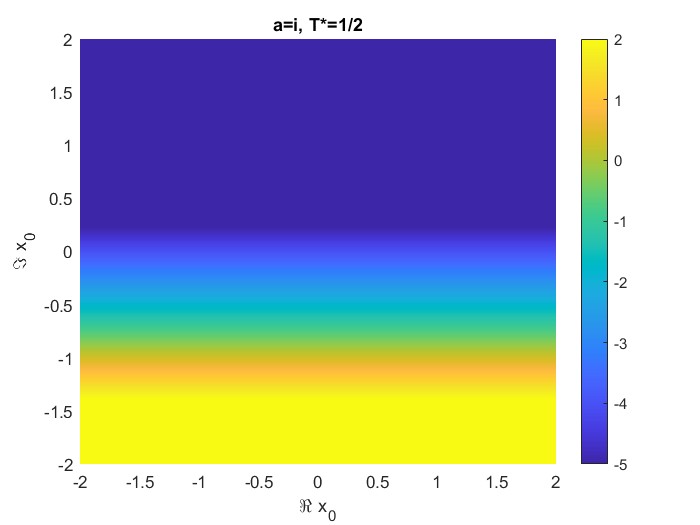}
\\
\includegraphics[width=4.9cm, height=3.8cm]{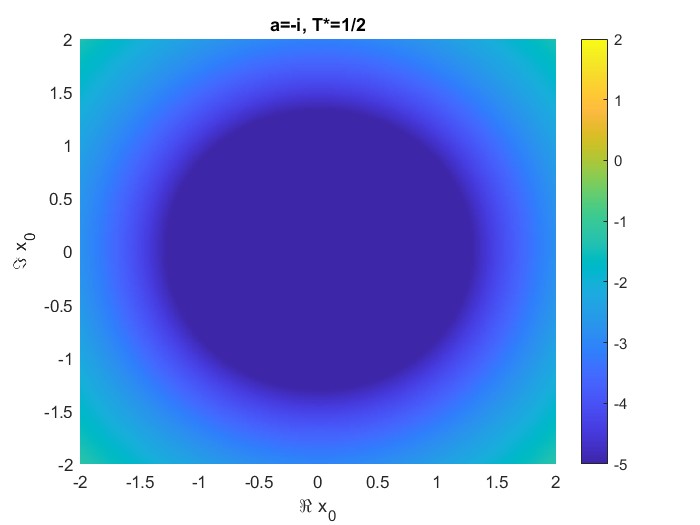}
\includegraphics[width=4.9cm, height=3.8cm]{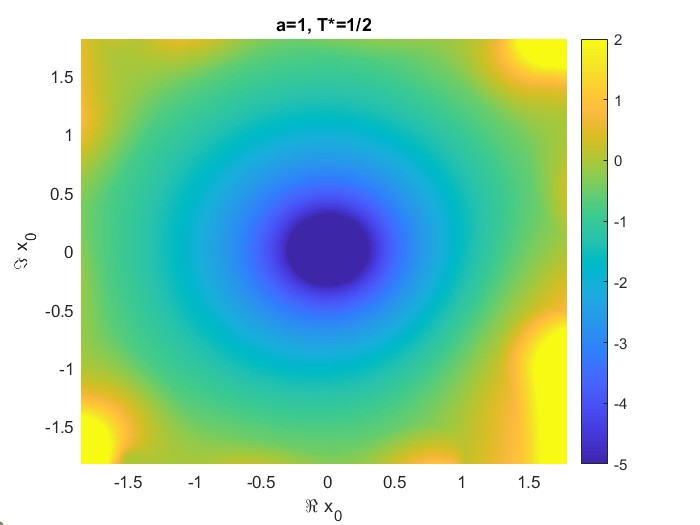}
\includegraphics[width=4.9cm, height=3.8cm]{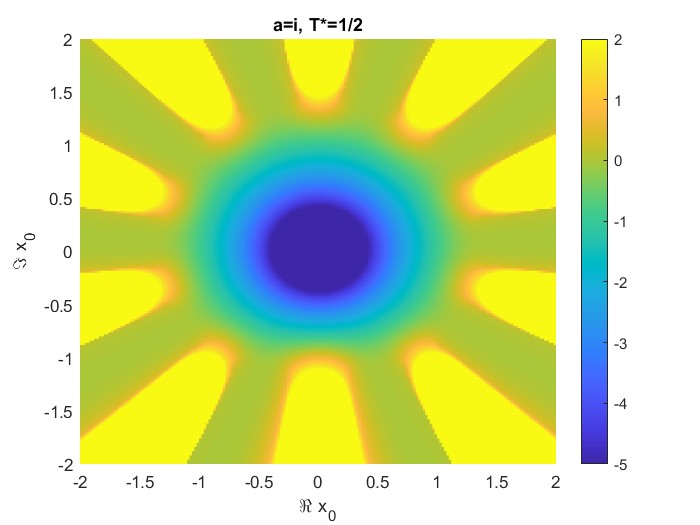}
         \caption{Plotted  on the top are
          the finite-section approximation errors $\max(\min(E_{CF}(x_0, T^*, N), 2), -5)$ of the Carleman-Fourier linearization, defined  in \eqref{fouriererror.def}, where  $-\pi/2\le\phi\le \pi/2$  as the $x$-axis
          and $-2\le \Im x_0\le 2$ as the $y$-axis,
          and level curve $E_{CF}(x_0, T^*, N)=0$ (in black)
          for $N=10$ and  $T^*=2$ (left),
           $1/2$ (middle) and $1/4$ (right) respectively. Shown in the middle are
         $\max(\min(E_{CF}(x_0, T^*, N), 2), -5)$ with  $-2\le \Re x_0 \le 2$  as the $x$-axis
          and $-2\le \Im x_0\le 2$ as the $y$-axis,
          for $N=10, T^*=1/2$ and  $\phi=-\pi/2$ (left),
           $0$ (middle) and $\pi/2$ (right) respectively.
           Plotted at the bottom  are
          the finite-section approximation errors $\max(\min(E_{C}(x_0, T^*, N), 2), -5)$ of Carleman linearization, defined in \eqref{OrderOneCarlemanErrod.def}, where $-2\le  \Re x_0,  \Im x_0\le 2, T^*=1/2, N=10$ and  $a=-i$ (left), $0$ (middle) and $i$ (right).
           }
  \label{fig:cferrorOrderOne}
\end{figure}

We finish this subsection with demonstration to the performance of  the Carleman linearization of the complex dynamical system \eqref{simpleexample2.eq1}.
Write $a(1-e^{ix})= -a \sum_{n=1}^\infty i^n x^n/n!$. Then the finite-section approximation to the classical Carleman linearization  is given by
\begin{equation}
\label{simpleexample2.eq7}
\hspace{-0.1cm} \begin{bmatrix}
    \Dot{{y}}_{1,N}(t) \\ \Dot{{y}}_{2,N}(t) \\ \vdots \\ \Dot{{ y}}_{N-1,N}(t) \\ \Dot{{ y}}_{N,N}(t)
\end{bmatrix}
   = \begin{bmatrix}
    -ai&  - \frac{a i^2 }{2!}& \cdots  & \cdots & - \frac{a i^{N-1} }{(N-1)!} & - \frac{ a i^{N}}{N!} \\
     & -2ai & \cdots  & \cdots  & -  \frac{2a i^{N-2}}{(N-2)!} & - \frac{2ai^{N-1}}{(N-1)!} \\
     &  &\ddots & \ddots & \vdots & \vdots \\
         &  & & \ddots & \vdots & \vdots \\
         &  & & & -(N-1)ai & -\frac{a(N-1)i^2}{2!} \\
         &  & & &  & -Nai
      \end{bmatrix}
     \begin{bmatrix}
    {y}_{1,N}(t) \\ {y}_{2,N}(t) \\ \vdots \\ y_{N-1, N}(t) \\{y}_{N,N}(t)
\end{bmatrix}
\end{equation}
with initial $y_{k, N}(0)=x_0^k$ for  $1\le k\le N$. Following the arguments in \cite{Amini2022}, the first component $ {y}_{1,N}(t)$ for $N\ge 1$, in the finite-section approximation \eqref{simpleexample2.eq7} provides a superb approximation to the solution $x(t)$ of the original dynamical system \eqref{simpleexample2.eq1} in a short time range when
the initial $x_0$ of the original dynamical system \eqref{simpleexample2.eq1} is near the origin. Shown in the bottom plots of Figure \ref{fig:cferrorOrderOne} 
demonstrates these conclusions, where
\begin{equation}\label{OrderOneCarlemanErrod.def}
E_C(x_0, T^*, N)= \max_{0\le t\le T^*} \Sp \log_{10} \big|e^{i(y_{1, N}(t)-x(t))}-1\big|
\end{equation}
and $a=1, i, -i$.
Unlike the Carleman-Fourier linearizaion, we observe that the proposed Carleman linearization  has
better performance for the
complex dynamical system \eqref{simpleexample2.eq1} with the parameter $a$ having negative imaginery part
than for  the one with the parameter $a$ having positive imaginery part. We believe that the reason could be the same,  as we  notice that, contrary to the Carleman-Fourier linearization,
the  finite-section approximation
\eqref{simpleexample2.eq7}  associated with the Carleman linearization of the corresponding  dynamical system
 is stable when $\Im a>0$, while it is unstable when $\Im a<0$.
 Comparing the performance between the Carleman-Fourier linearization and the Carleman linearization, we see that the proposed Carleman-Fourier linearization has  much better performance than the Carleman linearization
 when $\Im x_0$ is large, 
 while the  Carleman linearization, as expected,
  is a superior linearization technique of  a nonlinear dynamical system when the initial is not far from the origin.


\subsection {Carleman-Fourier linearization of the first-order Kuramoto model} \label{kuramoto.section}
In this subsection, we first consider the behavior of the Kuramoto model.
 For $d=2$, we see that $\theta_2=-\theta_1$, and the first phase $\theta_1$ of the Kuramoto model satisfies
$\Dot \theta_1=\omega_1- \tilde K \sin(2\theta_1)$, 
where $\tilde K=K/d=\pm 1$.
Therefore, the first phase $\theta_1(t)$ converges to one of the equilibria  $\{\frac{\tilde K}{2} \arcsin \omega_1, \frac{\pi}{2}- \frac{\tilde K}{2} \arcsin \omega_1\}+ \pi {\mathbb Z}$
   when $|\omega_1|\le 1$, and $\theta_1(t)$ diverges when $|\omega_1|>1$.
For $d=3$, the dynamical system corresponding to the first and second phase is given by
\begin{equation}\label{kuramotodimension3.def}
\left\{
\begin{array}{l}
\dot \theta_1=\omega_1- \tilde K \sin (\theta_1-\theta_2)- \tilde K \sin (2\theta_1+\theta_2)\\
\dot \theta_2=\omega_2-\tilde K\sin (\theta_2-\theta_1)-\tilde K \sin (2\theta_2+\theta_1)
\end{array}
\right.
\end{equation}
and the third phase is defined by $\theta_3=-\theta_1-\theta_2$, where $\tilde K=\pm 1$.
Shown in Figure \ref{CarlemanFourierKuramotomodel1.fig} is the vector field of the first and second phases in  the Kuramoto model with $d=3$.
The governing vector field in the  dynamical system \eqref{kuramotodimension3.def} is periodic with respect to
$(2\pi, 0), (0, 2\pi)$ and $(2\pi/3, 2\pi/3)$, and  the corresponding fundamental domain ${\mathbb R}^2/{\bf G}$ is
the polygon with vertices $(\pm 2\pi/3, 0), (0, \pm 2\pi/3), (\pm 2\pi/3, \mp 2\pi/3)$
where ${\bf G}$ is the  additive group generated by $(2\pi, 0), (0, 2\pi)$ and $(2\pi/3, 2\pi/3)$;
see the top left plot of Figure \ref{CarlemanFourierKuramotomodel1.fig}.
It is observed that phase trajectories of the dynamical system \eqref{kuramotodimension3.def} may  converge
to some equilibrium or diverge, 
 depending on the intrinsic frequencies $\omega_1$ and $\omega_2$, the initial phases $\theta_1(0)$ and $\theta_2(0)$, and also the coupling strength $\tilde K$.
Also the number of (un)stable equilibria may vary. For instance,
 the dynamical system \eqref{kuramotodimension3.def}
with coupling strength $K=-d=-3$ and zero intrinsic
frequencies has  equilibria $\{(0,0), (-\frac{\pi}{3}, \frac{\pi}{3}), (0, -\frac{2\pi}{3}), (\frac{\pi}{3}, -\frac{2\pi}{3}), (\frac{2\pi}{3}, -\frac{2\pi}{3}), (\frac{2\pi}{3}, -\frac{\pi}{3}), (\frac{2\pi}{3}, 0)\}+{\bf G}$, while
 the dynamical system \eqref{kuramotodimension3.def}
with coupling strength $K=-3$ and  intrinsic
frequencies $(\omega_1, \omega_2)=(1/2, 1/2)$  (respectively $(0, 1)$) has  equilibrium points
$\{(-\frac{5\pi}{6}, -\frac{5\pi}{6}), (-\frac{\pi}{6}, -\frac{\pi}{6})\}+{\bf G}$) (resp. $\{(0, \theta_1^*), (0, -\frac{\pi}{2})\}+{\bf G}$ where $\theta_1^*\approx -0.3352$ is a solution of the equation
$1+\sin  \theta+ \sin (2\theta)=0$), see the dark blue position on the top  plots of Figure \ref{CarlemanFourierKuramotomodel1.fig}.

\begin{figure}[t]
  \centering
  \includegraphics[width=4.9cm, height=3.8cm]{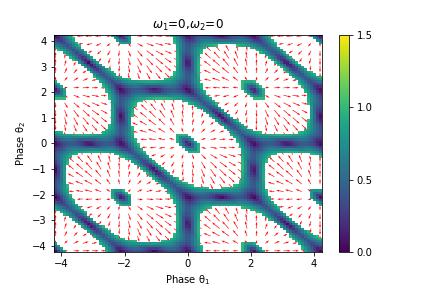}
   \includegraphics[width=4.9cm,  height=3.8cm]{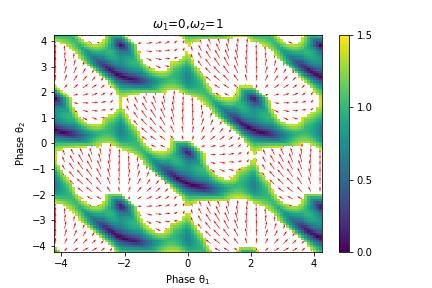}
   \includegraphics[width=4.9cm, height=3.8cm]{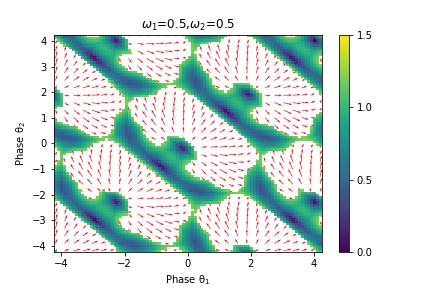} \\
    \includegraphics[width=4.9cm, height=3.8cm]{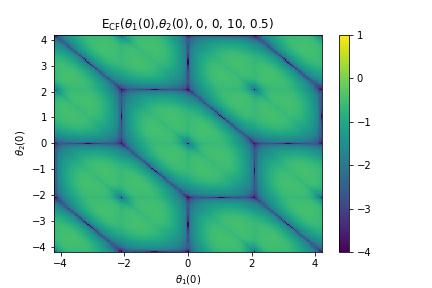}
   \includegraphics[width=4.9cm,  height=3.8cm]{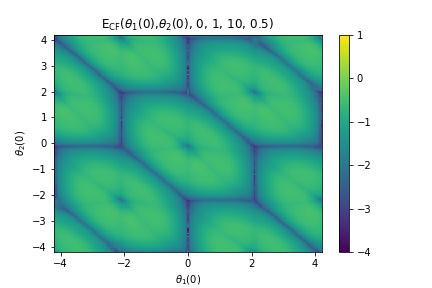}
   \includegraphics[width=4.9cm, height=3.8cm]{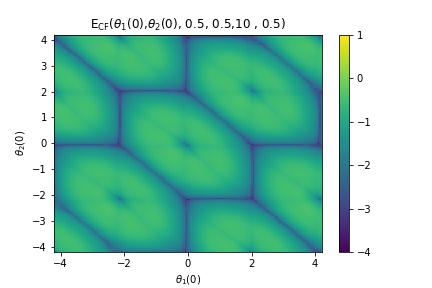} \\
       \includegraphics[width=4.9cm, height=3.8cm]{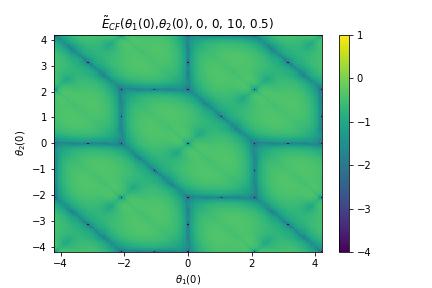}
      \includegraphics[width=4.9cm, height=3.8cm]{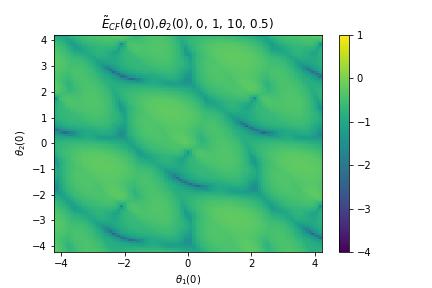}
      \includegraphics[width=4.9cm,  height=3.8cm]{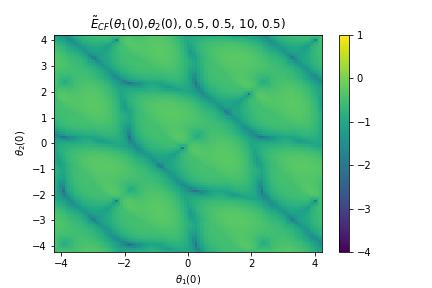}\\
    \captionsetup{width=1\linewidth}
\caption{Plotted on the top row are the vector fields of the dynamical system \eqref{kuramotodimension3.def} for $-4\pi/3\le \theta_1(0), \theta_2(0)\le 4\pi/3$
and the shadowed regions on which  the vector field has  relatively small  magnitude, where  $\tilde K=-1$ and
$(\omega_1, \omega_2)= (0, 0)$ (top left), $(0, 1)$ (top middle),
 $(0.5, 0.5)$ (top right) respectively. Plotted in the middle  and bottom rows are the approximation error
$E_{CF}(\theta_1(0), \theta_2(0), \omega_1, \omega_2, N, T)$ in \eqref{hatECFNt.def} and
$\tilde E_{CF}(\theta_1(0), \theta_2(0), \omega_1, \omega_2, N, T)$ in \eqref{tildeECNt.def},
where $-4\pi/3\le \theta_1(0), \theta_2(0)\le 4\pi/3, N=10, T=0.5, K=-1,  \theta_3(0)=-\theta_1(0)-\theta_2(0), \omega_3=-\omega_1-\omega_2$, and $(\omega_1, \omega_2)=(0, 0)$ (left), $(0, 1)$ (middle) and $(0.5, 0.5)$ (right) respectively.}
  \label{CarlemanFourierKuramotomodel1.fig}
\end{figure}

For Kuramoto model with $d=2$, the performances of its Carleman linearization and Carleman-Fourier linearization have been discussed in \cite{moteesun2024}. It is shown that the finite-section approximation to
the Carleman linearization exhibits exponential convergence
when the initial is not far away from the origin, and
Carleman-Fourier linearization delivers accurate linearizations for systems
featuring periodic vector fields over more extensive neighborhoods surrounding the
equilibrium point, outperforming traditional Carleman linearization except the natural frequency $\omega_1$ and the initial $\theta_1(0)$ are  close to the
origin. Next, we test the performance of Carleman-Fourier linearization and  Carleman linearization  for the Kuramoto model with  $d=3$.

With the normalization described in \eqref{kuramoto.eq2} for the Kuramoto model, the dynamical system \eqref{Kutamoto.def2}
with $d=3$ can be written as follows:
\begin{eqnarray}\label{kuramotodimension3analytic}
\hskip-0.12in  \left(\begin{array}{c}\dot
\theta_1\\
\dot\theta_2\\
\dot\theta_3\end{array}
\right)& \hskip-0.15in = & \hskip-0.15in \left(\begin{array}{c}
\omega_1\\
\omega_2\\
\omega_3\end{array}\right)\hskip-0.05in +
K^*\left\{
\left(\begin{array}{c}
-1\\
0\\
1\end{array}\right) e^{i(2\theta_1+\theta_2)}
+
\left(\begin{array}{c}
-1\\
1\\
0\end{array}\right) e^{i(2\theta_1+\theta_3)}
+
\left(\begin{array}{c}
0\\
-1\\
1\end{array}\right) e^{i(\theta_1+2\theta_2)}\right.\nonumber \\
  & \hskip-0.15in  & 
  \quad \left.
  \left(\begin{array}{c}
0\\
1\\
-1\end{array}\right)
e^{i(\theta_1+2\theta_3)}+
  \left(\begin{array}{c}
1\\
-1\\
0\end{array}\right)
e^{i(2\theta_2+\theta_3)}+
  \left(\begin{array}{c}
1\\
0\\
-1\end{array}\right)
e^{i(\theta_2+2\theta_3)}\right\},
\end{eqnarray}
where $K^*= K/(2di)$.
One can verify that the governing vector field of the above dynamical system is a vector-valued  periodic function with period $(2\pi, 0, 0), (0, 2\pi, 0), (0, 0, 2\pi)$ and $(2\pi/3, 2\pi/3, 2\pi/3)$. Define  the approximation error of the finite-section approximation of order $N\ge 1$ to the Carleman-Fourier linearization for the  dynamical system \eqref{kuramotodimension3analytic} in the logarithmic scale by
\begin{eqnarray}\label{hatECFNt.def}
E_{CF}(\theta_1(0), \theta_2(0), \omega_1, \omega_2, N, T) & \hskip-0.08in  = &
\hskip-0.08in   \sup_{0\le t\le T} \log_{10} \min
\Big\{ 10, \max \Big(10^{-4},  \nonumber \\
 & &  \quad  \max_{1\le p\le 3} \big| v_{p, N}(t)e^{-i\theta_p(t)}-1 \big|\Big)\Big\},
\end{eqnarray}
where $\omega_3=-\omega_1-\omega_2, \theta_3(0)=-\theta_1(0)-\theta_2(0)$
and $v_{p, N}, 1\le p\le 3$, forms the first block  ${\bf v}_{1, N}$ of the
 finite-section approximation \eqref{Carleman.eq7}.
Shown in the middle plots of Figure \ref{CarlemanFourierKuramotomodel1.fig} are the performance of Carleman-Fourier linearization for the  dynamical system \eqref{kuramotodimension3analytic} with the periodic governing field having positive frequencies,
 which demonstrates the theoretical result in Theorem  \ref{maintheoremanalytic.thm1} about exponential convergence of finite-section approximation
\eqref{Carleman.eq7} to the  Carleman-Fourier linearization of the  dynamical system \eqref{kuramotodimension3analytic} in a time range.

With the normalization described in \eqref{kuramoto.eq2} for the Kuramoto model,
the phases $\theta_1$ and $\theta_2$ of the first and second oscillators satisfies  \eqref{kuramotodimension3.def}. Define
the extended variables by
$  [\tilde\theta_1, \tilde \theta_2,  \tilde\theta_3, \tilde\theta_4]=[\theta_1, \theta_2, -\theta_1, -\theta_2]$.
Then the dynamical system associated with the above extended variables is given by
\begin{eqnarray}\label{kuramotodimension3extended}
\left(\begin{array}{c}
\dot{\tilde{\theta}}_1\\
\dot{\tilde{\theta}}_2\\
\dot{\tilde{\theta}}_3\\
\dot{\tilde{\theta}}_4\end{array}
\right)& \hskip-0.15in = & \hskip-0.15in \left(\begin{array}{c}
\omega_1\\
\omega_2\\
-\omega_1\\
-\omega_2\end{array}\right)\hskip-0.05in +
\tilde {K}\left\{
\left(\begin{array}{c}
-1\\
1\\
1\\
-1\end{array}\right) e^{i(\tilde\theta_1+\tilde\theta_4)}
+
\left(\begin{array}{c}
1\\
-1\\
-1\\
1\end{array}\right) e^{i(\tilde\theta_2+\tilde\theta_3)}
+
\left(\begin{array}{c}
-1\\
0\\
1\\
0\end{array}\right) e^{i(2\tilde\theta_1+\tilde\theta_2)}\right.\nonumber \\
  & \hskip-0.15in  & \hskip-0.15in
  \left.
  +\left(\begin{array}{c}
0\\
-1\\
0\\
1\end{array}\right)
e^{i(\tilde\theta_1+2\tilde\theta_2)}+
  \left(\begin{array}{c}
1\\
0\\
-1\\
0\end{array}\right)
e^{i(2\tilde\theta_3+\tilde\theta_4)}+
  \left(\begin{array}{c}
0\\
1\\
0\\
-1\end{array}\right)
e^{i(\tilde\theta_3+2\tilde\theta_4)}\right\},
\end{eqnarray}
where $\tilde K=\frac{K}{2di}$.
Define the approximated error of finite-section approach to its Carleman-Fourier linearization in the logarithmic scale  by
\begin{eqnarray}\label{tildeECNt.def}
\tilde E_{CF}(\theta_1(0), \theta_2(0), \omega_1, \omega_2, N, t)
 & = & \sup_{0\le t'\le t} \log_{10} \min\Biggl\{\max\Bigl(\max_{1\le p\le 2} \big|\tilde v_{p,N}(t')e^{-i\theta_p(t')}-1 \big|,\nonumber \\
 & &  \max_{1\le p\le 2} \big|\tilde v_{p+2,N}(t')e^{i\theta_p(t')}-1 \big|, 10^{-4}\Bigl), 10\Biggl \},
\end{eqnarray}
where $\tilde v_{q, N}, 1\le q\le 4$, form the first block of the finite-section approach in \eqref{Carleman.multiple.eq8}.
Shown in the bottom plots of  Figure \ref{CarlemanFourierKuramotomodel1.fig}
are the approximation error $\tilde E_{CF}(\theta_1(0), \theta_2(0), \omega_1, \omega_2, N, t), -4\pi/3\le \theta_1(0)\le 4\pi/3$, which demonstrates the
exponential convergence conclusion in Theorem \ref{maintheoremanalytic.thm3}. We observe that
the approximation errors
$ E_{CF}(\theta_1(0), \theta_2(0), \omega_1, \omega_2, N, t)$ and $\tilde E_{CF}(\theta_1(0), \theta_2(0), \omega_1, \omega_2, N, t)$
of finite-section approach of our two Carleman-Fourier linearizations of the Kuramoto model with $d=3$ are
periodic about the initial phases $\theta_1(0)$ and $\theta_2(0)$ with period $(2\pi, 0), (0, 2\pi)$ and $(2\pi/3, 2\pi/3)$. We also notice that
the finite-section approach has small approximation error around the equilibria and the sides of the parallelogon with vertices
$(\pm 2\pi/3, 0), (0, \pm 2\pi/3)$ and $(\pm 2\pi/3, \mp 2\pi/3)$, which coincides with
the position of the initial phases, where the vector field has small amplitudes, see  Figure \ref{CarlemanFourierKuramotomodel1.fig}.

Using the Taylor expansion for the sine function, we can rewrite the dynamical system \eqref{kuramotodimension3.def} for the state vector $(\theta_1, \theta_2)$ as follows:
\begin{equation} \label{kuramotodimension3.Taylordef}
\left(\begin{array}{c}\dot
\theta_1\\
\dot\theta_2\end{array}
\right)=\left(\begin{array}{c}
\omega_1\\
\omega_2\end{array}\right)+\tilde K \sum_{k=0}^\infty \sum_{l=0}^{2k+1}
\frac{(-1)^k }{ (2k+1-l)! l! }
\left(\begin{array}{c}(-1)^l- 2^l\\
(-1)^{2k+1-l}-2^{2k+1-l}\end{array}\right) \theta_1^l \theta_2^{2k+1-l},
\end{equation}
where $\tilde K=\pm 1$.
Shown in  Figure \ref{Carlemankuramotomodel.fig}
are the approximation errors in the logarithmic scale,
\begin{eqnarray}\label{hatECNt.def}
E_C(\theta_1(0), \theta_2(0), \omega_1, \omega_2, N, T) & \hskip-0.08in  = & \hskip-0.08in   \sup_{0\le t\le T}
 \log_{10} \min\Big\{10,\  \max\Big (10^{-4},\  \nonumber \\
& & \big|e^{i(y_{1, N}(t)-\theta_1(t))}-1 \big|, \big|e^{i(y_{2, N}(t)-\theta_2(t))}-1 \big|\Big)\Big\},
\end{eqnarray}
where $(y_{1, N}, y_{2, N})$ is the first block of the finite-section approximation of order $N$ to the Carleman linearization of the dynamical system \eqref{kuramotodimension3.Taylordef}.
 This demonstrates the consistence with the theoretical conclusion in Theorem  \ref{maintheorem0}
   that finite-section approximation of the traditional Carleman linearization offers an applauding estimate to the original dynamical system when the initial phases are very close to the origin.
   Comparing the performances shown in Figures \ref{CarlemanFourierKuramotomodel1.fig} and \ref{Carlemankuramotomodel.fig}, the Carleman-Fourier linearization has much better performance than the classical Carleman linearization does when the initial phases of the Karumoto model are a bit far away from the origin.

\begin{figure}[t]
  \centering
    \includegraphics[width=4.9cm, height=3.8cm]{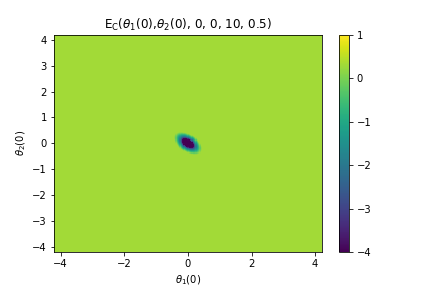}
    \includegraphics[width=4.9cm, height=3.8cm]{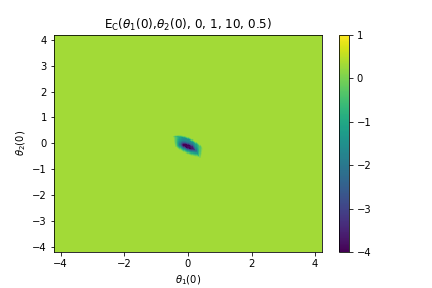}
    \includegraphics[width=4.9cm, height=3.8cm]{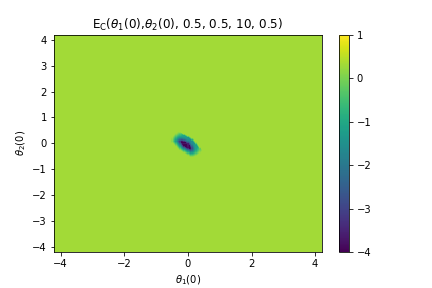}
\caption{Plotted are the approximation error
$E_C(\theta_1(0), \theta_2(0), \omega_1, \omega_2, N, T), -4\pi/3\le  \theta_1(0), \theta_2(0)\le 4\pi/3$, in
\eqref{hatECNt.def} of the finite-section method to the Carleman linearization of the dynamical system \eqref{kuramotodimension3.Taylordef}, where $\tilde K=-1, N=10, T=0.5$ and $(\omega_1, \omega_2)=(0, 0)$(left),
$(0, 1)$(middle) and $(0.5, 0.5)$(right). }
  \label{Carlemankuramotomodel.fig}
\end{figure}

\section{Proofs}
\label{proof.section}

In this section, we collect the proofs of Theorems \ref{maintheoremanalytic.thm1} and \ref{maintheoremanalytic.thm2}, and also the estimates in
\eqref{comparison.eq00} and \eqref{timerange.inequality}.

\subsection{Proof of Theorem \ref{maintheoremanalytic.thm1}}
\label{maintheoremanalytic.thm1.pfsection}

Following the argument used in \cite{Amini2022},  we have the following estimate about
  $\|{\bf   w}_1(t)\|_{\infty}$ on a short time range. 

\begin{lemma} \label{maintheoremanalytic.lem2}
 Let  ${\bf   x}$ be the solution of the complex dynamical system \eqref{dynamicsystem}
 with the initial ${\bf x}_0$ satisfying \eqref{maintheoremanalytic.thm1.eq1} and
   the vector field ${\bf g}$  satisfying  \eqref{assumption0} and Assumption \ref{assump-1},
   and set ${\bf w}_1(t)=e^{i{\bf x}(t)}, t\ge 0$.
Then
   \begin{equation} \label{maintheoremanalytic.lem2.eq1}
   \|{\bf   w}_1(t)\|_\infty\le  \|{\bf   w}_1(0)\|_\infty^{(e-1)/(2e-1)} (R/e)^{e/(2e-1)}<R/e\ \  {\rm for \ all} \ 0\le t\le T^*,
   \end{equation}
   where  $T^*$ is given in \eqref{maintheoremanalytic.thm1.eq4}.
\end{lemma}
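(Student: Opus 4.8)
The plan is to track the single scalar quantity $M(t):=\|{\bf w}_1(t)\|_\infty=\exp\big(-\min_{1\le j\le d}\Im x_j(t)\big)$ and show it obeys a closed scalar integral inequality. First I would record the evolution of each component $|e^{ix_j(t)}|=e^{-\Im x_j(t)}$. Since $\dot x_j=g_j(t,{\bf x})=\sum_{{\pmb \gamma}\in{\mathbb Z}^d_+}g_{j,{\pmb \gamma}}(t)\,e^{i{\pmb \gamma}{\bf x}}$ by \eqref{complexdynamic.def} and \eqref{assumption0}, differentiating gives $\tfrac{d}{dt}e^{-\Im x_j}=-\big(\Im g_j(t,{\bf x})\big)e^{-\Im x_j}$, hence the integral representation
\[
|e^{ix_j(t)}|=|e^{ix_{0,j}}|\exp\Big(-\int_0^t\Im g_j(s,{\bf x}(s))\,ds\Big),\qquad 1\le j\le d.
\]

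Next I would bound the vector field along the trajectory. The analyticity hypothesis \eqref{assumption0} enters in an essential way: because only nonnegative frequencies ${\pmb \gamma}\in{\mathbb Z}^d_+$ occur, one has $|e^{i{\pmb \gamma}{\bf x}}|=\prod_k(e^{-\Im x_k})^{\gamma_k}\le M(t)^{|{\pmb \gamma}|}$. Combining this with Assumption~\ref{assump-1} and summing a geometric series yields $|\Im g_j|\le|g_j|\le D_0/(1-M(t)/R)$ whenever $M(t)<R$. Substituting into the integral representation and taking the maximum over $j$ produces the scalar inequality $M(t)\le M(0)\exp\big(\int_0^t D_0/(1-M(s)/R)\,ds\big)$, where $M(0)=\|\exp(i{\bf x}_0)\|_\infty<e^{-1}R$ by \eqref{maintheoremanalytic.thm1.eq1}.

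Third I would run a bootstrap on the a priori bound $M(s)<R/e$. As long as $M(s)<R/e$ the integrand is strictly less than $D_0e/(e-1)$, so $\ln M(t)\le\ln M(0)+\tfrac{D_0e}{e-1}t$. Setting $L^*:=\tfrac{e-1}{2e-1}\ln M(0)+\tfrac{e}{2e-1}\ln(R/e)$, one checks directly that $L^*<\ln(R/e)$ (since $\ln M(0)<\ln(R/e)$) and that the linear bound reaches $L^*$ precisely at $t=T^*$ from \eqref{maintheoremanalytic.thm1.eq4}; indeed $\tfrac{e-1}{D_0e}\big(L^*-\ln M(0)\big)=\tfrac{e-1}{D_0(2e-1)}\big(\ln R-1-\ln\|\exp(i{\bf x}_0)\|_\infty\big)=T^*$. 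A continuity argument closes the loop: if $t_1\in(0,T^*]$ were the first time $M$ attains $R/e$, then the linear bound forces $\ln M(t_1)\le L^*<\ln(R/e)$, a contradiction, so $M(s)<R/e$ persists on all of $[0,T^*]$. Exponentiating $\ln M(t)\le L^*$ then gives exactly \eqref{maintheoremanalytic.lem2.eq1}.

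The main obstacle is the self-consistency of this bootstrap: the slope bound $D_0e/(e-1)$ is valid only under the a priori hypothesis $M<R/e$, which is itself the conclusion being proved, so the continuity argument must be arranged so that the threshold $R/e$ is never actually attained on $[0,T^*]$. This works precisely because the strict inequality $L^*<\ln(R/e)$ leaves a margin, and it is exactly this margin that pins down the value of $T^*$. A minor technical point, the non-smoothness of $M(t)=\max_j|e^{ix_j(t)}|$, is sidestepped by arguing through the per-component integral representation rather than differentiating the maximum directly.
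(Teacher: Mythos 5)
Your proof is correct, and while it shares the paper's overall bootstrap skeleton --- a geometric-series bound $D_0/(1-M/R)$ coming from Assumption \ref{assump-1} together with the analyticity condition \eqref{assumption0}, an exponential-growth estimate with rate at most $D_0e/(e-1)$, and the same arithmetic showing the threshold cannot be reached before $T^*_{CF}$ --- the technical implementation is genuinely different. The paper argues through the lifted system: it writes
\begin{equation*}
\|{\bf w}_1(t)\|_\infty \le \|{\bf w}_1(0)\|_\infty + \int_0^t\sum_{l=1}^{\infty}\|{\bf B}_{1,l}(s)\|_{{\mathcal S}}\,\|{\bf w}_l(s)\|_\infty\,ds,
\end{equation*}
invokes the Schur-norm bound \eqref{Carleman.eq4b} together with $\|{\bf w}_l\|_\infty\le\|{\bf w}_1\|_\infty^l$, and then applies the integral form of Gronwall's inequality; you instead work componentwise on the original ODE and exploit the exact representation $|e^{ix_j(t)}|=|e^{ix_{0,j}}|\exp\big(-\int_0^t\Im g_j(s,{\bf x}(s))\,ds\big)$, which yields the multiplicative bound directly and makes Gronwall unnecessary. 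Your first-crossing threshold is also different: you bootstrap at $R/e$ and let the bound $L^*=\ln M_0$ fall out of the linear estimate, whereas the paper takes the threshold to be $M_0$ itself (declaring victory immediately if it is never attained) and shows the first crossing time satisfies $T\ge T^*_{CF}$; these are logically equivalent ways to close the loop, since in both cases the strict gap $M_0<R/e$ supplies the margin. What each buys: your route is more elementary and self-contained, never leaving the finite-dimensional ODE; the paper's phrasing in terms of the blocks ${\bf B}_{1,l}$ and Schur norms is reused almost verbatim in the error analysis of the full finite-section approximation (proof of Theorem \ref{maintheoremanalytic.thm1}), so its version of the lemma doubles as a warm-up for that argument.
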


\begin{proof}
   Set
   \begin{equation}\label{M0.def}
   M_0=\|{\bf   w}_1(0)\|_\infty^{(e-1)/(2e-1)} (R/e)^{e/(2e-1)}. \end{equation}
   If $\|{\bf   w}_1(t)\|_{\infty }\leq M_0$ for all $t\geq 0$, the proof is completed.
Otherwise, by the continuity of the states $x_j(t), 1\le j\le d$, there exists some $T>0$ such that
\begin{equation} \label{maintheoremanalytic.lem2.pfeq0}
\|{\bf   w}_1(t)\|_{\infty}\leq M_0\ \ {\rm for \ all} \ \ 0\le t\le T, \ \ {\rm  and} \ \
   \|{\bf   w}_1(T)\|_{\infty}= M_0.\end{equation}
Then it suffices to prove that
\begin{equation} \label{maintheoremanalytic.lem2.pfeq0}
T\ge T^*.
\end{equation}

By \eqref{Carleman.eq1}, \eqref{Carleman.eq4b},
we have
\begin{eqnarray*}
\|{\bf   w}_1(t)\|_\infty & \hskip-0.08in  \le & \hskip-0.08in
\|{\bf   w}_1(0)\|_\infty
+\int_{0}^t\sum_{l=1}^\infty \|{\bf   B}_{1, l}(s)\|_{{\mathcal S}} \|{\bf   w}_l(s)\|_\infty ds \\
     &\le  &  \|{\bf   w}_1(0)\|_\infty+\int_{0}^t\sum_{l=1}^{\infty} D_0 R^{1-l} \|{\bf   w}_1(s)\|_\infty^l ds\\
     &\hskip-0.08in  \le & \hskip-0.08in   \|{\bf   w}_1(0)\|_\infty+\frac{D_0 }{1-M_0/R}\int_{0}^t \|{\bf   w}_1(s)\|_\infty ds,\ \ 0\le t\le T.
\end{eqnarray*}
Therefore, by  the integral form of Gronwall's inequality, we have
\begin{align*}
   \|{\bf   w}_1(t)\|_\infty \le \|{\bf   w}_1(0)\|_\infty e^{\frac{D_0 }{1-M_0/R}t}, \ 0\le t\le T.
\end{align*}
This together with  the assumption that $\|{\bf   w}_1(T)\|_{\infty}= M_0$
 proves
 \begin{equation*}
 T\ge \frac{1-M_0/R}{D_0} \ln \frac{M_0}{\|{\bf   w}_1(0)\|_\infty}\ge T^*,
 \end{equation*}
 where the last inequality holds as $M_0<R/e$ by \eqref{maintheoremanalytic.thm1.eq1}.
This proves \eqref{maintheoremanalytic.lem2.pfeq0} and hence completes the proof.
\end{proof}

To prove Theorem \ref{maintheoremanalytic.thm1}, we need two  technical lemmas which follow from \cite[Lemmas 5.3 and 5.4]{Amini2022} in  about solutions of ordinary differential systems.

	\begin{lemma}\label{maintheoremanalytic.lem3} Let ${\bf B}_{k, k}(t), k \ge 1$, be as in \eqref{Carleman.eq4}.
Consider   the  ordinary differential system
	\begin{equation}\label{maintheoremanalytic.lem3.eq1}
	\Dot{\bf   u}_k(t)= {\bf   B}_{k, k}(t) {\bf   u}_k(t)+ {\bf   v}_k(t), \  t\ge 0
	\end{equation}
with   zero  initial  ${\bf u}_k(0)={\bf   0}$, where ${\bf v}_k$ is a vector-valued  continuous function  about $t\ge 0$.
Then
	\begin{equation}\label{maintheoremanalytic.lem3.eq2}
	{\bf   u}_k(t)=\int_{0}^t {\bf    K}_k(t, s)  {\bf   v}_k(s) ds
	\end{equation}
	where ${\bf    K}_k(t, s), t\ge s\ge 0$, is a diagonal matrix with diagonal entries
	\begin{equation}  \label{maintheorem.lem3.eq3}
\exp \Big(i  \int_{s}^t {\pmb \alpha}^T {\bf g}_{ {\bf   0}}(u) du\Big), \ {\pmb \alpha}\in {\mathbb Z}_{+, k}^d.
	\end{equation}
	\end{lemma}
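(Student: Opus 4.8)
The plan is to exploit the diagonal structure of the matrix ${\bf B}_{k,k}(t)$ recorded in \eqref{Carleman.eq4}, which turns the linear system \eqref{maintheoremanalytic.lem3.eq1} into a decoupled family of scalar inhomogeneous linear ordinary differential equations. Writing ${\bf u}_k=[u_{\pmb\alpha}]_{{\pmb\alpha}\in{\mathbb Z}_{+,k}^d}$ and ${\bf v}_k=[v_{\pmb\alpha}]_{{\pmb\alpha}\in{\mathbb Z}_{+,k}^d}$, the ${\pmb\alpha}$-th component of \eqref{maintheoremanalytic.lem3.eq1} reads
\begin{equation*}
\dot u_{\pmb\alpha}(t)=\big(i{\pmb\alpha}^T{\bf g}_{\bf 0}(t)\big)\,u_{\pmb\alpha}(t)+v_{\pmb\alpha}(t),\qquad u_{\pmb\alpha}(0)=0,
\end{equation*}
so it suffices to solve each of these scalar equations separately and then read off the matrix kernel.

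For each fixed ${\pmb\alpha}$ I would use the standard integrating-factor (variation-of-parameters) method. Setting $\Phi_{\pmb\alpha}(t)=\exp\big(i\int_0^t{\pmb\alpha}^T{\bf g}_{\bf 0}(u)\,du\big)$, which is a nonvanishing absolutely continuous solution of the homogeneous equation $\dot\Phi_{\pmb\alpha}=(i{\pmb\alpha}^T{\bf g}_{\bf 0})\Phi_{\pmb\alpha}$ with $\Phi_{\pmb\alpha}(0)=1$, one multiplies the scalar equation by $\Phi_{\pmb\alpha}(t)^{-1}$ and recognizes the left-hand side as $\frac{d}{dt}\big(\Phi_{\pmb\alpha}(t)^{-1}u_{\pmb\alpha}(t)\big)=\Phi_{\pmb\alpha}(t)^{-1}v_{\pmb\alpha}(t)$. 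Integrating from $0$ to $t$ and using $u_{\pmb\alpha}(0)=0$ yields $u_{\pmb\alpha}(t)=\int_0^t\Phi_{\pmb\alpha}(t)\Phi_{\pmb\alpha}(s)^{-1}v_{\pmb\alpha}(s)\,ds$. The quotient $\Phi_{\pmb\alpha}(t)\Phi_{\pmb\alpha}(s)^{-1}=\exp\big(i\int_s^t{\pmb\alpha}^T{\bf g}_{\bf 0}(u)\,du\big)$ is exactly the ${\pmb\alpha}$-th diagonal entry of the claimed kernel ${\bf K}_k(t,s)$ in \eqref{maintheorem.lem3.eq3}, and reassembling the components over ${\pmb\alpha}\in{\mathbb Z}_{+,k}^d$ gives the matrix identity \eqref{maintheoremanalytic.lem3.eq2} with ${\bf K}_k(t,s)$ diagonal.

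Finally I would verify directly that the formula solves the initial value problem, to make the argument self-contained. The initial condition $u_{\pmb\alpha}(0)=0$ is immediate since the integral over $[0,0]$ vanishes. For the differential equation, because ${\bf v}_k$ is continuous and the kernel entry $(t,s)\mapsto\exp\big(i\int_s^t{\pmb\alpha}^T{\bf g}_{\bf 0}(u)\,du\big)$ is jointly continuous and continuously differentiable in $t$, the Leibniz rule for differentiation under the integral sign applies and gives $\dot u_{\pmb\alpha}(t)=v_{\pmb\alpha}(t)+\int_0^t\big(i{\pmb\alpha}^T{\bf g}_{\bf 0}(t)\big)\exp\big(i\int_s^t{\pmb\alpha}^T{\bf g}_{\bf 0}(u)\,du\big)v_{\pmb\alpha}(s)\,ds=\big(i{\pmb\alpha}^T{\bf g}_{\bf 0}(t)\big)u_{\pmb\alpha}(t)+v_{\pmb\alpha}(t)$, as required. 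There is no serious obstacle here: the only point needing mild care is the justification of differentiation under the integral sign, which is guaranteed by the continuity of ${\bf g}_{\bf 0}$ and ${\bf v}_k$, and everything else is the classical Duhamel formula specialized to a diagonal, time-dependent generator.
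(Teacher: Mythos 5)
Your proof is correct. Note, though, that the paper never proves this lemma at all: it is introduced with the remark that it (together with the companion Lemma on the scalar inequalities) ``follows from [Amini2022, Lemmas 5.3 and 5.4]'', and no argument is given. So your self-contained derivation fills in what the paper delegates to a citation, and it does so by the natural route: the diagonal form of ${\bf B}_{k,k}(t)$ recorded in \eqref{Carleman.eq4} decouples \eqref{maintheoremanalytic.lem3.eq1} into scalar linear equations, each solved by the integrating factor $\Phi_{\pmb\alpha}(t)=\exp\big(i\int_0^t{\pmb\alpha}^T{\bf g}_{\bf 0}(u)\,du\big)$, and the quotient $\Phi_{\pmb\alpha}(t)\Phi_{\pmb\alpha}(s)^{-1}$ reproduces exactly the diagonal entries \eqref{maintheorem.lem3.eq3} of the kernel. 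This is the classical time-dependent Duhamel formula, which is surely also the content of the cited lemma, so there is no genuine methodological divergence --- only the difference between proving and citing. The single point worth flagging is that both your integrating-factor step and your Leibniz-rule verification implicitly use that $t\mapsto{\bf g}_{\bf 0}(t)$ is continuous (or at least locally integrable); the paper's Assumption \ref{assump-1} only asserts a uniform bound, so continuity of the Fourier coefficients in $t$ is a standing regularity hypothesis that you, like the paper, are taking for granted. That is harmless, but stating it would make your argument fully airtight.
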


\begin{lemma} \label{maintheoremanalytic.lem4}
	Let  $D_0>0$ and
	$U_k, 1\le k\le N$, be nonnegative functions satisfying
	\begin{equation}\label{maintheorem.lem4.pf1}
		0\le U_k(t)\le   D_0k \int_{0}^t e^{D_0 k (t-s)} \left(\sum_{l=k+1}^N   U_l(s) +1 \right) ds,\ \  t\ge 0,
	\end{equation}
 then
	\begin{equation} \label{maintheorem.lem4.pf2}
		U_{2}(t)+\cdots+U_N(t)+1\le
		\frac{N^{N-2}}{(N-2)!}  e^{D_0Nt}, \ t\ge 0.
	\end{equation}
\end{lemma}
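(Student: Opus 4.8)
The plan is to prove the estimate \eqref{maintheorem.lem4.pf2} by a backward induction on the index $k$, running from $k=N$ down to $k=2$. The natural quantity to track is the tail sum
\[
V_k(t) := \sum_{l=k}^N U_l(t) + 1, \qquad 1\le k\le N, \qquad V_{N+1}(t) \equiv 1,
\]
in terms of which the hypothesis \eqref{maintheorem.lem4.pf1} reads $0\le U_k(t)\le D_0 k\int_0^t e^{D_0 k(t-s)}V_{k+1}(s)\,ds$, since the bracket on the right of \eqref{maintheorem.lem4.pf1} is exactly $V_{k+1}(s)$. The goal \eqref{maintheorem.lem4.pf2} is then the single statement $V_2(t)\le \frac{N^{N-2}}{(N-2)!}e^{D_0Nt}$. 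The key idea is to carry the induction with an ansatz in which \emph{every} tail sum is controlled by the same exponential rate, namely the largest one $D_0N$: I would show that $V_k(t)\le C_k\,e^{D_0Nt}$ for suitable constants $C_k$, and it is precisely this uniform rate that makes the driving integral collapse to a clean closed form.

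First I would establish the base case $k=N$. Here $V_{N+1}\equiv 1$, so $U_N(t)\le D_0N\int_0^t e^{D_0N(t-s)}\,ds = e^{D_0Nt}-1$, whence $V_N(t)=U_N(t)+1\le e^{D_0Nt}$, giving $C_N=1$. For the inductive step, assume $V_{k+1}(t)\le C_{k+1}e^{D_0Nt}$ for all $t\ge 0$, with $k<N$. The core computation is the integral estimate
\[
\int_0^t e^{D_0k(t-s)}e^{D_0Ns}\,ds = \frac{e^{D_0Nt}-e^{D_0kt}}{D_0(N-k)} \le \frac{e^{D_0Nt}}{D_0(N-k)},
\]
which is valid precisely because $N-k>0$. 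Substituting the inductive hypothesis into the recast bound for $U_k$ yields $U_k(t)\le \frac{k}{N-k}\,C_{k+1}\,e^{D_0Nt}$, and therefore $V_k(t)=U_k(t)+V_{k+1}(t)\le \big(\tfrac{k}{N-k}+1\big)C_{k+1}e^{D_0Nt}=\tfrac{N}{N-k}C_{k+1}e^{D_0Nt}$. This gives the recursion $C_k=\frac{N}{N-k}\,C_{k+1}$.

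It then remains to solve the recursion with initial value $C_N=1$. Writing $k=N-j$ and unwinding, I expect $C_{N-j}=N^{j}/j!$, which is immediate by a short secondary induction: $C_{N-(j+1)}=\frac{N}{j+1}\cdot\frac{N^{j}}{j!}=\frac{N^{j+1}}{(j+1)!}$. Taking $k=2$, i.e.\ $j=N-2$, produces $C_2=\frac{N^{N-2}}{(N-2)!}$, which is exactly \eqref{maintheorem.lem4.pf2}. I do not anticipate a genuine obstacle here: the argument is a standard backward Gronwall/iteration scheme, and the only place demanding care is the bookkeeping of the constants $C_k$. The one conceptual point worth flagging is the choice to bound \emph{all} tail sums at the common rate $D_0N$ rather than the index-dependent rate $D_0k$; without this uniformization the integral would not telescope and the product of constants would not close into the compact factorial expression $N^{N-2}/(N-2)!$.
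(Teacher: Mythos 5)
Your proof is correct. The paper does not prove this lemma itself---it defers to \cite[Lemmas 5.3 and 5.4]{Amini2022}---and your backward induction on $k$, tracking the tail sums $V_k(t)=\sum_{l=k}^N U_l(t)+1$ at the common rate $e^{D_0Nt}$, is exactly the standard argument behind that cited result: the base case $V_N(t)\le e^{D_0Nt}$, the integral estimate $\int_0^t e^{D_0k(t-s)}e^{D_0Ns}\,ds\le e^{D_0Nt}/(D_0(N-k))$ (valid since $k<N$), and the recursion $C_k=\tfrac{N}{N-k}C_{k+1}$ with $C_N=1$ all close up correctly to give $C_2=N^{N-2}/(N-2)!$, including the degenerate case $N=2$ where the claim is just the base case.
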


Now we are ready to prove Theorem \ref{maintheoremanalytic.thm1}.

\begin{proof}[Proof of Theorem \ref{maintheoremanalytic.thm1}]
Define ${\bf   u}_k(t)={\bf   v}_{k,N}(t)-{\bf   w}_k(t), 1\le k\le N$. Then one may verify that
\begin{equation}\label{maintheoremanalytic.thm1.pfeq1}
    \Dot{{\bf   u}}_k(t) 
    = {\bf   B}_{k,k}(t){\bf   u}_k(t)+\sum_{l=k+1}^N {\bf   B}_{k,l}(t){\bf   u}_{l}(t)-\sum_{l=N+1}^{\infty}{\bf   B}_{k,l}(t){\bf   w}_{l}(t),
\end{equation}
and
\begin{equation} \label{maintheoremanalytic.thm1.pfeq2}
{\bf   u}_k(0)={\bf   0}, \ 1\le k\le N.
\end{equation}
Therefore
\begin{equation} \label{maintheoremanalytic.thm1.pfeq3}
    {\bf   u}_k(t)=\int_{0}^t
{\bf   K}_k(t,s)\Big( \sum_{l=k+1}^N {\bf B}_{k,l}(s){\bf  u}_{l}(s)-\sum_{l=N+1}^{\infty} {\bf B}_{k,l}(s){\bf  w}_{l}(s)\Big) ds,
\end{equation}
by Lemma \ref{maintheoremanalytic.lem3},
where the kernel ${\bf   K}_k(t,s)$ satisfies
\begin{equation} \label{maintheoremanalytic.thm1.pfeq4}
        \|{\bf   K}_k(t,s)\|_{{\mathcal S}}
  \leq  e^{D_0 k(t-s)}
 \ \ \text{for \ all} \ \ 0\leq s\leq t
    \end{equation}
    by \eqref{maintheorem.lem3.eq3} and  Assumption \ref{assump-1}.

Let $M_0$ be as in \eqref{M0.def}.
By Lemma \ref{maintheoremanalytic.lem2}, we have
 \begin{equation} \label{maintheoremanalytic.thm1.pfeq6}
\|{\bf w}_1(t)\|_\infty\le M_0<R/e \ \ {\rm for \ all}\  0\le t\le T^*.
\end{equation}
By \eqref{Carleman.eq4b},
 \eqref{maintheoremanalytic.thm1.pfeq3},
\eqref{maintheoremanalytic.thm1.pfeq4}, \eqref{maintheoremanalytic.thm1.pfeq6}
and  the observation that
$\|{\bf   w}_l(t)\|_\infty\le \|{\bf   w}_1(t)\|_\infty^l, l\ge 1$,
we obtain
\begin{eqnarray*}
\|{\bf   u}_k(t)\|_{\infty}&\hskip-0.08in \leq & \hskip-0.08in  D_0 k
\int_{t_0}^t e^{D_0 k (t-s)}
\Bigg(\sum_{l=k+1}^N \frac{\|{\bf   u}_{l}(s)\|_\infty}{ R^{l-k}} +
\sum_{l=N+1}^{\infty} \frac{\|{\bf   w}_{l}(s)\|_{\infty}}{R^{l-k}}\Bigg)ds\nonumber\\
    &\leq &  D_0 k\int_{t_0}^te^{D_0k(t-s)}
    \Bigg(\sum_{l=k+1}^N\frac{\|{\bf   u}_l(s)\|_\infty}{R^{l-k}} +\frac{M_0^{N}}{(e-1) R^{N-k}}\Bigg)ds,
\end{eqnarray*}
where  $0\le t\le T^*$.
This implies that
\begin{equation}\label{maintheoremanalytic.thm1.pfeq7}
    U_k(t)\leq  D_0k\int_{t_0}^te^{D_0k(t-s)}\Bigg(\sum_{l=k+1}^N U_l(s)+1\Bigg)ds,
\end{equation}
where
 $U_k(t)=\frac{(e-1) R^{N}}{M_0^{N}} R^{-k}\|{\bf   u}_k(t)\|_{\infty}, 1\le k\le N$.
By  \eqref{maintheoremanalytic.thm1.pfeq7} and Lemma \ref{maintheoremanalytic.lem4}, we obtain
\begin{equation} \label{maintheoremanalytic.thm1.pfeq8}
    U_1(t) \le  (2\pi)^{-1/2} N^{-3/2} e^{D_0 Nt+N}, 0\le t\le T^*.
\end{equation}
Therefore for $0\le t\le T^*$, we get
\begin{eqnarray} \label{maintheoremanalytic.thm1.pfeq8+}
\|{\bf   v}_{1,N}(t)-{\bf   w}_1(t)\|_{\infty}
& \hskip-0.08in = & \hskip-0.08in \|{\bf   u}_1(t)\|_{\infty}=
\frac{  M_0^{N} R} {(e-1) R^{N}}  U_1(t) \nonumber\\
   & \hskip-0.08in\leq & \hskip-0.08in
  \frac{ R}{\sqrt{2\pi} (e-1)} N^{-3/2} 
e^{ D_0(t-T^*)N}.
\end{eqnarray}

Applying \eqref{assumption0} and \eqref{maintheoremanalytic.thm1.pfeq6} to the original complex dynamical system \eqref{complexdynamic.def},
we have
\begin{equation*} \label{maintheoremanalytic.thm1.pf9}
\|\Dot{\bf x}(t)\|_\infty\le   \sum_{{\pmb \alpha}\in {\mathbb Z}_+^d} \|{\bf g}_{{\pmb \alpha}}(t)\|_1 |e^{i{\pmb \alpha} {\bf   x}}|
 \le  D_0\sum_{k=0}^\infty \Big(\frac{M_0}{R}\Big)^{k}\le \frac{D_0 e}{e-1},
\end{equation*}
which implies that
\begin{equation}  \label{maintheoremanalytic.thm1.pf10}
\|({\bf w}_1(t))^{-1}\|_\infty=\big\|e^{-i{\bf x}(t)}\big\|_\infty\le \exp\Big(\max_{1\le j\le d}\Im x_{0, j}+\frac{eD_0 T^*}{e-1} \Big),  0\le t\le T^*.
\end{equation}
 This together with \eqref{maintheoremanalytic.thm1.pfeq8+} implies that
\begin{eqnarray*}
 |{ v}_{j, N}(t) e^{-ix_j(t)}-1| & \hskip-0.08in \le &
\hskip-0.08in  \frac{ R}{\sqrt{2\pi} (e-1)} N^{-3/2} \exp\big(D_0 (t-T^*)N\big)\nonumber\\
& & \times \exp\Big( \frac{eD_0T^*}{e-1} +\max_{1\le j\le d}\Im x_j(0)\Big)
\end{eqnarray*}
hold for all $0\le t\le T^*$ and $1\le j\le d$. This completes the proof.
\end{proof}

\subsection{Proof of Theorem \ref{maintheoremanalytic.thm2}}
\label{maintheoremanalytic.thm2.pfsection}

To prove Theorem \ref{maintheoremanalytic.thm2}, we need  an estimate about
  $\|{\bf   w}_1(t)\|_{\infty}, t\ge 0$. 

\begin{lemma} \label{maintheoremanalytic.thm2.lem1}
Consider
 the  complex dynamical system \eqref{dynamicsystem}
with the vector field ${\bf g}(t, {\bf   x})$ satisfying  \eqref{assumption1}, \eqref{assumption0} and \eqref{assumption2},
and the initial  ${\bf   x}_0$
 satisfying \eqref{maintheoremanalytic.thm2.eq1}.
Let ${\bf   x}=[x_1(t), \ldots, x_d(t)]^T$ be the solution of the nonlinear dynamical system \eqref{complexdynamic.def},
and  $u(t), t\ge 0$, be as in \eqref{ut.def}.
 Then
  \begin{equation}\label{maintheoremanalytic.thm2.lem1.eq1}
    u(t)
    \leq u(0) \exp\Big(-\Big(\mu_0-\frac{D_0u(0)}{R-u(0)}\Big)t\Big),  \ \ t\ge 0.
\end{equation}
\end{lemma}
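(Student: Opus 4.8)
The plan is to derive a scalar differential inequality for $u(t)$ and then close it through a continuity (bootstrap) argument. Since $u(t)=\big(\sum_{j=1}^d e^{-2\Im x_j(t)}\big)^{1/2}$ is strictly positive (a sum of exponentials) and ${\bf x}(t)$ is differentiable, $u$ is differentiable with
\[
\dot u(t)=-\frac{1}{u(t)}\sum_{j=1}^d e^{-2\Im x_j(t)}\,\Im\dot x_j(t),
\]
where, by \eqref{complexdynamic.def} and the analyticity hypothesis \eqref{assumption0}, $\dot x_j=\sum_{{\pmb \alpha}\in{\mathbb Z}^d_+}g_{j,{\pmb \alpha}}(t)\,e^{i{\pmb \alpha}{\bf x}}$. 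First I would split off the constant Fourier mode ${\pmb \alpha}={\bf 0}$: by the positivity assumption \eqref{assumption2} it contributes $\sum_j e^{-2\Im x_j}\Im g_{j,{\bf 0}}(t)\ge \mu_0\sum_j e^{-2\Im x_j}=\mu_0 u^2$. For the higher modes ${\pmb \alpha}\in{\mathbb Z}^d_{++}$, the key point is that every component satisfies $e^{-\Im x_k}\le u$ (one term is bounded by the whole $\ell^2$ sum), so $|e^{i{\pmb \alpha}{\bf x}}|=\prod_k(e^{-\Im x_k})^{\alpha_k}\le u^{|{\pmb \alpha}|}$, using that all $\alpha_k\ge 0$.

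Grouping the higher-mode contribution by $|{\pmb \alpha}|=k\ge 1$ and invoking Assumption~\ref{assump-1}, its absolute value is at most $u^2\sum_{k\ge1}D_0(u/R)^k=u^2\,\frac{D_0 u}{R-u}$, valid whenever $u<R$. Combining the two estimates yields the central differential inequality
\[
\dot u(t)\le -u(t)\Big(\mu_0-\frac{D_0 u(t)}{R-u(t)}\Big),
\]
which holds as long as $u(t)<R$.

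The crucial observation is that the hypothesis \eqref{maintheoremanalytic.thm2.eq1}, namely $u(0)<\mu_0 R/(D_0+\mu_0)$, is \emph{exactly} the condition $\mu_0-\frac{D_0 u(0)}{R-u(0)}>0$, and the map $u\mapsto \mu_0-\frac{D_0 u}{R-u}$ is strictly decreasing on $[0,R)$. I would then run a first-crossing argument to show $u(t)\le u(0)$ for all $t\ge0$: if $u$ first attained the level $u(0)$ from below at some time $t_1$, the inequality above would force $\dot u(t_1)\le -u(0)\big(\mu_0-\frac{D_0 u(0)}{R-u(0)}\big)<0$, so $u$ would be strictly decreasing there, contradicting that it exceeds $u(0)$ immediately afterward. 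Consequently $u(t)\le u(0)<R$ throughout, which keeps the solution inside the region where the inequality is valid (and in particular gives the a priori bound ensuring global existence).

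Finally, because $u(t)\le u(0)$ and the coefficient is monotone, $\mu_0-\frac{D_0 u(t)}{R-u(t)}\ge \mu_0-\frac{D_0 u(0)}{R-u(0)}$, so $\dot u\le -\big(\mu_0-\frac{D_0 u(0)}{R-u(0)}\big)u$, and Gronwall's inequality delivers the claimed bound \eqref{maintheoremanalytic.thm2.lem1.eq1}. I expect the main obstacle to be the bootstrap step: the differential inequality is only meaningful while $u<R$ and its right-hand side depends nonlinearly on $u$, so Gronwall cannot be applied directly; it is precisely the sign condition coming from \eqref{maintheoremanalytic.thm2.eq1} that lets the continuity argument confine $u$ to the admissible region and thereby linearize the estimate.
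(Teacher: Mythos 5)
Your proposal is correct and follows essentially the same route as the paper: derive the differential inequality $\tfrac{d}{dt}u^2 \le -2\mu_0 u^2 + 2u^2\tfrac{D_0 u}{R-u}$ (the paper works with $u^2$, you with $u$ — equivalent), use a continuity/bootstrap argument to confine $u(t)\le u(0)<\mu_0 R/(D_0+\mu_0)$, then freeze the coefficient at $u(0)$ and integrate (Gronwall). Your explicit first-crossing argument is a slightly more careful rendering of the paper's "applying the above procedure repeatedly," but the substance is identical.
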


\begin{proof}
By the continuity of the function $u$ about $t$, there exists $\delta>0$ such that
\begin{equation}
\label{maintheoremanalytic.thm2.lem1.pf.eq1}
u(t)<\frac{\mu_0 R}{D_0+\mu_0}, \ 0\le t\le \delta.
\end{equation}
Then  by \eqref{complexdynamic.def}, \eqref{maintheoremanalytic.thm2.eq1} and \eqref{maintheoremanalytic.thm2.lem1.pf.eq1},  we have
\begin{eqnarray} \label{maintheoremanalytic.thm2.lem1.pf.eq2}
    \frac{d}{dt}u(t)^2 
    & \hskip-0.08in =& \hskip-0.08in  -2 \sum_{j=1}^d |e^{ix_j(t)}|^2
    \Im \Big(  g_{j, {\bf   0}}(t)+
    \sum_{{\pmb \beta} \in \mathbb{Z}^d_{++}}g_{j,{\pmb \beta}}(t){  w}_{{\pmb \beta}}(t)\Big)\nonumber\\
    & \hskip-0.08in \le & \hskip-0.08in -2\mu_0 u(t)^2+ 2  u(t)^2  \frac{D_0 u(t) }{R-u(t)} <0,\ \ 0\le t\le \delta.\qquad
\end{eqnarray}
Applying the above procedure repeatedly,  we conclude that
\begin{equation}
\label{maintheoremanalytic.thm2.lem1.pf.eq3}
u(t)\le u(0)<\frac{\mu_0 R}{D_0+\mu_0}, \ \  t\ge 0.
\end{equation}
Using the bound estimate in \eqref{maintheoremanalytic.thm2.lem1.pf.eq3} and following the similar argument used to establish
 \eqref{maintheoremanalytic.thm2.lem1.pf.eq2}, we obtain
\begin{equation} \label{maintheoremanalytic.thm2.lem1.pf.eq4}
    \frac{d}{dt}u(t)^2
     \le  2\Big(-\mu_0 +  \frac{D_0 u(0) }{R-u(0)}\Big) u(t)^2, \ t\ge 0.
\end{equation}
Dividing $u(t)^2$ at both sides of the above inequality and then integrating on the interval $[0, t]$ completes the proof.
\end{proof}

\begin{proof}
[Proof of Theorem \ref{maintheoremanalytic.thm2}]
Define ${\bf   u}_k(t)={\bf   v}_{k,N}(t)-{\bf   w}_k(t), 1\le k\le N$.  Following the argument in
the proof of Theorem \ref{maintheoremanalytic.thm1},
we see that
${\bf   u}_k, 1\le k\le N$, satisfy  %
\begin{equation} \label{maintheoremanalytic.thm2.pfeq1}
   {\bf   u}_k(t)=\int_{0}^t
{\bf   K}_k(t,s)
\Big(\sum_{l=k+1}^N {\bf   B}_{k,l}(s){\bf   u}_{l}(s)-\sum_{l=N+1}^{\infty}{\bf   B}_{k,l}(s){\bf   w}_{l}(s)\Big)
ds, \ \ t\ge 0,
\end{equation}
where $ {\bf   K}_k(t, s), t\ge s\ge 0$, is the kernel function in Lemma \ref{maintheoremanalytic.lem3}.
By \eqref{assumption2} and \eqref{maintheorem.lem3.eq3}, we see that for $0\le s\le t$,
\begin{equation} \label{maintheoremanalytic.thm2.pfeq2}
    \|{\bf   K}_k(t,s)\|_{{\mathcal S}}
   \leq  \exp\Big( \max_{|{\pmb \alpha}|=k} -\sum_{j=1}^d \alpha_j \int_s^t \Im g_{j, {\bf   0}}(u) du\Big)
   \leq \exp(-k\mu_0(t-s)),
             \end{equation}
 where ${\pmb \alpha}=[\alpha_1, \ldots, \alpha_d]^T\in {\mathbb Z}_{+, k}^d$.
Therefore
\begin{eqnarray} \label{maintheoremanalytic.thm2.pfeq3}
R^{-k}\|{\bf   u}_k(t)\|_{\infty}&\hskip-0.08in \leq & \hskip-0.08in \int_{0}^t e^{-k\mu_0(t-s)}
\Big(\sum_{l=k+1}^N D_0 k R^{-l}\|{\bf   u}_l(s)\|_{\infty}
+
\sum_{l=N+1}^{\infty}  D_0 kR^{-l}\|{\bf   w}_{l}(s)\|_{\infty}\Big)ds\nonumber\\
& \hskip-0.08in \leq & \hskip-0.08in  D_0 k\int_{0}^t e^{-k\mu_0(t-s)}
\Big(\sum_{l=k+1}^N  R^{-l}\|{\bf   u}_l(s)\|_{\infty} + \frac{\mu_0}{ D_0} \Big(\frac{\|\exp(i{\bf x}_0)\|_2}{R}\Big)^{N}
\Big)ds,
\end{eqnarray}
where the first estimate holds by \eqref{Carleman.eq4b}, \eqref{maintheoremanalytic.thm2.pfeq1} and \eqref{maintheoremanalytic.thm2.pfeq2}, 
and the second inequality follows from \eqref{maintheoremanalytic.thm2.eq1}  and the observation that
$$ \|{\bf   w}_l(s)\|_{\infty}\le \|{\bf   w}_1(s)\|_{\infty}^l\le
\|{\bf   w}_1(s)\|_2^l\le \|{\bf   w}_1(0)\|_2^l=\|\exp(i{\bf x}_0)\|_2^l, \ s\ge 0
$$
by Lemma \ref{maintheoremanalytic.thm2.lem1}.
Applying \eqref{maintheoremanalytic.thm2.pfeq3} repeatedly, we may show
$$
\sum_{l=k}^N  R^{-l}\|{\bf   u}_l(s)\|_{\infty} + \frac{\mu_0}{ D_0} \Big(\frac{\|\exp(i{\bf x}_0)\|_2}{R}\Big)^{N}
\le \frac{\mu_0}{D_0}
\Big(\frac{D_0+\mu_0}{\mu_0}\Big)^{N+1-k}
 \Big(\frac{\|\exp(i{\bf x}_0)\|_2}{R}\Big)^{N}$$
by induction on $k=N, \ldots, 1$.
Taking $k=1$ in the above estimate proves the desired conclusion \eqref{maintheoremanalytic.thm2.eq02}.
\end{proof}

\subsection{Proof of \eqref{comparison.eq00}} \label{comparison.eq00.pfsection}  The first inequality in \eqref{comparison.eq00} holds  as
$ (\ln R)^2<2 R$ and
\begin{eqnarray*}
& &  \inf_{1\le \|{\bf x}_0\|_\infty<\ln R/e}
\frac{\ln R-1- \max(\ln \|\exp(i{\bf x}_0)\|_\infty,\ln  \|\exp(-i{\bf x}_0)\|_\infty)}{\ln \ln R- \ln \|{\bf x}_0\|_\infty-1}\\
& \ge&  \inf_{u>0, \|{\bf x}_0\|_\infty\ge 1}
\frac{e^{u+1}\|{\bf x}_0\|_\infty -1-\|{\bf x}_0\|_\infty}{u}=\inf_{u>0} \frac{e^{u+1}-2}{u}\approx 4.9215>4.
\end{eqnarray*}

The second estimate in \eqref{comparison.eq00} follows as $g(t):=\ln r_C(t)-\ln \tilde r_{CF}(t)$ is a linear function about $t$ satisfying
$g(T_C^*)=-\ln \tilde r_{CF}(T_C^*)>0$ and
$$g(0)\ge \frac{e-1}{2e-1} \Big(\ln \|{\bf x}_0\|_\infty-\ln\ln R+\ln R- \|{\bf x}_0\|_\infty\Big)>0.$$

 \subsection {Proof of  \eqref{timerange.inequality}} \label{timerange.inequality.pfsection}
With the substitution of $t\cos \phi$ by $s$ and $\tan \phi$ by $u$, it suffices to show that
 \begin{equation}\label{timerange.inequality2}
 \frac{u^2}{1+u^2} \sup_{s\ge 0} (e^{-2su}-2 e^{-su} \cos s+1)<1, \ \ u>0.
 \end{equation}
 Observe that
 $$  \frac{u^2}{1+u^2}  \sup_{0\le s\le \pi/3}   (e^{-2su}-2 e^{-su} \cos s+1)\le  \frac{u^2}{1+u^2} <1,$$
as $2\cos s\ge 1$ for all $0\le s\le \pi/3$,
 \begin{eqnarray*}
 \frac{u^2}{1+u^2}\sup_{s\ge 2\sqrt{3}/e} (e^{-2su}-2 e^{-su} \cos s+1) &\hskip-0.08in  < & \hskip-0.08in
 \frac{u^2}{1+u^2} \sup_{s\ge 2\sqrt{3}/e} (3 e^{-su}+1)\nonumber\\
 & \hskip-0.08in = & \hskip-0.08in 1+\frac{3u^2\exp(-2\sqrt{3} u/e)-1}{1+u^2}\le 1,
 \end{eqnarray*}
since the function $3u^2\exp(-2\sqrt{3} u/e), 0<u<\infty$, attains its maximal value $1$ at $u=e/\sqrt{3}$, and
 \begin{eqnarray*}
 &\hskip-0.08in    & \hskip-0.08in    \frac{u^2}{1+u^2}\sup_{ \pi/3\le s\le 2\sqrt{3}/e} (e^{-2su}-2 e^{-su} \cos s+1)\nonumber\\
 &\hskip-0.08in  \le  & \hskip-0.08in
 \frac{u^2}{1+u^2} \sup_{ \pi/3\le s\le 2\sqrt{3}/e}\big( (1-2\cos (2\sqrt{3}/e)) e^{-su}+1\big)\nonumber\\
 &\hskip-0.08in  \le  & \hskip-0.08in 1+  \frac{(1-2\cos (2\sqrt{3}/e)) u^2e^{-\pi u/3}-1}{1+u^2}
 < 1
  \end{eqnarray*}
because the function    $(1-2\cos (2\sqrt{3}/e)) u^2e^{-\pi u/3}, 0<u<\infty$, attains its maximal value
$(1-2\cos (2\sqrt{3}/e)) (6/\pi)^2 e^{-2}\approx 0.2053<1$ at the value $6/\pi$.
Combining the above three estimates proves \eqref{timerange.inequality2} and hence
the desired result \eqref{timerange.inequality} for the exponential convergence time range.

\section{Conclusion}
This paper introduces the Carleman-Fourier linearization method, extending traditional Carleman linearization, to complex nonlinear dynamical systems with periodic vector fields and multiple fundamental frequencies. By leveraging Fourier basis functions, this approach achieves a sparse representation of periodic vector fields, effectively capturing both periodic and nonlinear behaviors. The method transforms the system into an infinite-dimensional linear model, with finite-section approximations providing exponential convergence to the original system's state vector. Explicit error bounds are established, demonstrating the accuracy of approximations over larger regions and extended time horizons, especially near equilibrium points. The framework is applicable to systems with analyticity conditions on their vector fields and is extended to handle cases where these conditions are not strictly satisfied, such as in the Kuramoto model. The Carleman-Fourier linearization outperforms traditional methods in terms of precision and convergence, particularly for systems with exponentially decaying Fourier coefficients. These improvements enable robust analyses and reliable long-term predictions, which are crucial for applications like model predictive control, safety verification, and quantum computing.


\begin{thebibliography}{99}

\bibitem{abdia2023} M. Abudia, J. A. Rosenfeld and R. Kamalapurkar, Carleman lifting for nonlinear system identification with guaranteed error bounds,
{\em 2023 American Control Conference (ACC), San Diego, CA, USA}, 2023, pp. 929--934.  


\bibitem{Amini2022} A. Amini,  C. Zheng, Q. Sun and N. Motee,
 Carleman linearization of nonlinear systems and its finite-section approximations,
 {\em Discrete and Continuous Dynamical Systems Series B}, 
 30(2), 2025, pp. 577--603.


\bibitem{amini2021error}
A. Amini, Q. Sun and N. Motee, Error bounds for Carleman linearization of general nonlinear
systems, {\em 2021 Proc. the Conference on Control and its Applications}, SIAM, 2021, pp. 1--8.

\bibitem{amini2020approximate}
A. Amini,  Q. Sun and N. Motee,
 Approximate optimal control design for a class of nonlinear systems by lifting Hamilton-Jacobi-Bellman equation,
{\em 2020 American Control Conference (ACC)}, IEEE, 2020, pp. 2717--2722.

\bibitem{amini2020quadratization}
A. Amini, Q. Sun and N. Motee, Quadratization of Hamilton-Jacobi-Bellman equation for near-optimal control of nonlinear systems,
 {\em 59th IEEE Conference on Decision and Control (CDC)}, IEEE, 2020, pp. 731--736.

\bibitem{Akiba2023}  T.
Akiba, Y.  Morii  and K.  Maruta, Carleman linearization approach for chemical kinetics integration toward quantum computation,
{\em Scientific Reports}, 13(1), 2023, pp. 3935.

\bibitem{brockett2014early}
R. Brockett,
The early days of geometric nonlinear control,
{\em Automatica}, 50(9), 2014, pp. 2203--2224.


\bibitem{bronski2021}
J. C. Bronski, T. E. Carty and L. DeVille, Synchronisation conditions in the Kuramoto model
and their relationship to seminorms, {\em Nonlinearity}, 34(8), 2021, pp. 5399--5433.




\bibitem{Brunton2016} S. L.
Brunton,  J. L.  Proctor  and J. N.  Kutz,
 Discovering governing equations from data by sparse identification of nonlinear dynamical systems,
{\em Proceedings of the National Academy of Sciences}, 113(15), 2016, pp. 3932--3937.

\bibitem{dietert2016} H. Dietert, Stability and bifurcation for the Kuramoto model, {\em Journal de Math\'{e}matiques Pures et Appliqu\'{e}es},
105(4), 2016, pp. 451--489.






\bibitem{forets2017explicit} M.
Forets and A.  Pouly,
Explicit error bounds for Carleman linearization, 2017,
{\em arXiv preprint arXiv:1711.02552}.

\bibitem{forets2021reachability} M.
Forets and C.  Schilling, Reachability of weakly nonlinear systems using Carleman linearization,
{\em International Conference on Reachability Problems},  Springer, 2021, pp. 85--99.

\bibitem{guo2021}
Y. Guo, D. Zhang, Z. Li, Q. Wang and D. Yu, Overviews on the applications of the Kuramoto
model in modern power system analysis, {\em International Journal of Electrical Power \& Energy Systems}, 129, 2021,  article no. 106804.




\bibitem{hashemian2015fast} N.
Hashemian and A.  Armaou,
Fast moving horizon estimation of nonlinear processes via Carleman linearization,
 {\em 2015 American Control Conference (ACC)}, IEEE, 2015, pp. 3379--3385.


\bibitem{heggli2019}
O. A. Heggli, J. Cabral, I. Konvalinka, P. Vuust and M. L. Kringelbach, A Kuramoto model of
self-other integration across interpersonal synchronization strategies, {\em PLoS Computational Biology},
15(10), 2019, article no. e1007422, 17pp.



\bibitem{ji2014}
P. Ji, T. K. Peron, F. A. Rodrigues and J. Kurths, Low-dimensional behavior of
Kuramoto model with inertia in complex networks, {\em Scientific Reports}, 4(1), 2014, article no.  4783.






\bibitem{Korda2018} M. Korda  and I. Mezi\'{c}, Linear predictors for nonlinear dynamical systems: Koopman operator meets model predictive control,
{\em Automatica}, 93, 2018, pp. 149--160.

\bibitem{KordaMezic2018}  M.
Korda and I.  Mezi\'{c},
On convergence of extended dynamic mode decomposition to the Koopman operator,
{\em Journal of Nonlinear Science}, 28, 2018, pp. 687--710.

\bibitem{kowalski1991nonlinear} K. Kowalski  and W.H. Steeb,
 Nonlinear dynamical systems and Carleman linearization,  World Scientific, 1991.



\bibitem{krener1974linearization}  A. J.
Krener,
Linearization and bilinearization of control systems,
{\em Proceedings of the 1974 Allerton Conference on Circuit and Systems Theory, Urbana III}, 1974.

\bibitem{krener1975bilinear} A. J. Krener,
Bilinear and nonlinear realizations of input-output maps,
{\em SIAM Journal on Control}, 13(4), 1975, pp. 827--834.

\bibitem{kuramoto1984}
Y. Kuramoto, Chemical oscillations, waves, and turbulence, New York, Springer-Verlag,
1984. 

\bibitem{liu2021efficient} J.P. Liu, H. O.  Kolden, H. K.  Krovi, N. F.  Loureiro, K. Trivisa and A. W. Childs,
Efficient quantum algorithm for dissipative nonlinear differential equations,
{\em Proceedings of the National Academy of Sciences}, 118(35), 2021, article no.  e2026805118.

\bibitem{loparo1978estimating} K.
Loparo and G.  Blankenship,
Estimating the domain of attraction of nonlinear feedback systems,
{\em IEEE Transactions on Automatic Control}, 23(4), 1978, pp. 602--608.

\bibitem{minisini2007carleman}  J. Minisini, A.  Rauh and  E. P. Hofer,
Carleman linearization for approximate solutions of nonlinear control problems: Part 1--theory,
{\em Proc. of the 14th Intl. Workshop on Dynamics and Control}, 2007, pp. 215--222.


\bibitem{moteesun2024} N. Motee and Q. Sun, On exponential convergence of Carleman-Fourier
linearization of nonlinear real dynamical systems, {\em In prepartion}.


\bibitem{pruekprasert2022moment}  S.
Pruekprasert,  J.  Dubut, T.  Takisaka, C.  Eberhart  and  A. Cetinkaya,
Moment propagation through Carleman linearization with application to probabilistic safety analysis, 2022,
{\em arXiv preprint arXiv:2201.08648}.

\bibitem{rauh2009carleman}  A. Rauh, J.  Minisini and H. Aschemann,
Carleman linearization for control and for state and disturbance estimation of nonlinear dynamical processes,
{\em IFAC Proceedings Volumes},  42(13), 2009, pp. 455--460.

\bibitem{rotondo2022towards}  D.
Rotondo, G. Luta and J. H. U. Aarvag,
Towards a Taylor-Carleman bilinearization approach for the design of nonlinear state-feedback controllers,
{\em European Journal of Control 68}, 2022, article no.  100670.


\bibitem{steeb1980non}
W. H. Steeb and  F. Wilhelm,
Non-linear autonomous systems of differential equations and Carleman linearization procedure,
{\em Journal of Mathematical Analysis and Applications}, 77(2), 1980, pp. 601--611.  

\bibitem{surana2024} A. Surana, A. Gnanasekaran  and  T. Sahai,
An efficient quantum algorithm for simulating polynomial dynamical systems,  {\em Quantum Information Processing},  23(3), 2024, article no. 105, 22pp. 

\bibitem{WangJungersOng2023} Z.
Wang, R. M. Jungers and C. J. Ong, Computation of invariant sets via immersion for discrete-time nonlinear systems,
{\em Automatica}, 147, 2023, article no.  110686, 9pp.

\bibitem{Wu2024} H. C. Wu, J. Wang and  X. Li, Quantum algorithms for nonlinear dynamics: revisiting Carleman linearization with no dissipative conditions, 2024, {\em arXiv preprint  arXiv:2405.12714}.


	\end{thebibliography}
\end{document}